\newcommand\invisible[1]{}
\def\subsection{\@startsection{subsection}{2}%
	\z@{.5\linespacing\@plus.7\linespacing}{.1\linespacing}%
	{\normalfont\bfseries}}
\newtheorem{theorem}{Theorem}[section]
\newtheorem{lemma}[theorem]{Lemma}
\newtheorem{corollary}[theorem]{Corollary}
\newtheorem{proposition}[theorem]{Proposition}
\newtheorem{definition}[theorem]{Definition}
\newtheorem{example}[theorem]{Example}
\newtheorem{remark}[theorem]{Remark}
\newtheorem{propdef}[theorem]{Proposition-Definition}
\newcommand{\field}[1]{\mathbb{#1}}
\newcommand{\N}{\field{N}}
\newcommand{\R}{\field{R}}
\newcommand{\bun}{{\bf 1}}
\newcommand{\ba}{{\bf a}}
\newcommand{\bbl}{{\bf bl}}
\newcommand{\bbo}{{\bf 0}}
\newcommand{\bB}{{\bf B}}
\newcommand{\bc}{{\bf c}}
\newcommand{\bS}{{\bf S}}
\newcommand{\bu}{{\bf u}}
\newcommand{\bv}{{\bf v}}
\newcommand{\bw}{{\bf w}}
\newcommand{\bx}{{\bf x}}
\newcommand{\by}{{\bf y}}
\newcommand{\bz}{{\bf z}}
\newcommand{\cC}{\mathcal{C}}
\newcommand{\cS}{\mathcal{S}}
\newcommand{\cU}{\mathcal{U}}
\newcommand{\cX}{\mathcal{X}}
\newcommand{\cyl}{{\rm cyl}}
\newcommand{\Cyl}{{\rm Cyl}}
\newcommand{\clos}{{\rm clos}}
\newcommand{\dbar}{\overline{d}}
\newcommand{\dd}{{\partial}}
\newcommand{\dlt}{{\delta}}
\newcommand{\Ebar}{\overline{E}}
\newcommand{\etabar}{\overline{\eta}}
\newcommand{\gbar}{\overline{g}}
\newcommand{\inn}{{{\rm in}}}
\newcommand{\lbd}{{\lambda}}
\newcommand{\mdl}{{\rm mod}}
\newcommand{\Mbar}{\overline{M}}
\newcommand{\Nbar}{\overline{N}}
\newcommand{\omg}{\omega}
\newcommand{\ovP}{{\overline{P}}}
\newcommand{\ovX}{{\overline{X}}}
\newcommand{\rd}{{\rm d}}
\newcommand{\Rgo}{{\R_{\geq 0}}}
\newcommand{\Rn}{{\R^n}}
\newcommand{\sing}{{\rm sing}}
\newcommand{\Sgm}{\Sigma}
\newcommand{\sm}{{\rm sm}}
\newcommand{\tht}{{\theta}}
\newcommand{\tr}{\tilde{r}}
\newcommand{\Ubar}{\overline{U}}
\newcommand{\ve}{\varepsilon}
\newcommand{\vp}{\varphi}
\newcommand{\vpbar}{\overline{\varphi}}
\newcommand{\stt}{\textup{st}}
\numberwithin{equation}{section}
\begin{document}
\title[On Lipschitz geometry of globally conic singular 
manifolds]{Remarks on Lipschitz geometry\\ on globally conic 
singular manifolds}
\author[A. Costa]{Andr\'e Costa}
%
\author[V. Grandjean]{Vincent Grandjean}
\author[M. Michalska]{Maria Michalska}
\address{
A. Costa, Centro de Ci\^encias e tecnologias, 
Universidade Estadual do Cear\'a, 
Campus do Itaperi, 60.714-903 Fortaleza - CE, Brasil}
\address{V. Grandjean, 
Departamento de Matem\'atica, Departamento de Matem\'atica,
Universidade Federal de Santa Catarina, 
88.040-900 Florianópolis - SC, Brasil,
Brasil}
\address{M. Michalska, Wydzia\l{} Matematyki i Informatyki, 
Uniwersytet \L{}\'o{}dzki, Banacha 22, 90-238 \L{}\'o{}d\'z{}, 
Poland}
\email{andrecosta.math@gmail.com, 
vincent.grandjean@ufsc.br,\newline 
maria.michalska@wmii.uni.lodz.pl}
\subjclass[2000]{}
\dedicatory{In honour of Professor 	 Phạm Tiến Sơn   for his 
sixtieth birthday.}
\keywords{conic metric, asymptotically conic metric, conic 
singularity, Lipschitz geometry, Lipschitz normally embedded
set}
%
%
%
\maketitle
\selectlanguage{english} 
\begin{abstract}
We study metric properties of manifolds with conic singularities and present a natural interplay between metrically conic and metrically asymptotically conic behaviour. 
As a consequence, we prove that a singular sub-manifold is Lipschitz normally 
embedded, i.e. its inner and outer metric structures are equivalent, in an ambient singular manifold, whenever the 
singularities are conic and the ends of the manifold are 
asymptotically conic, which answers positively a question 
of \cite{CoGrMi3}.
\end{abstract}
\setcounter{tocdepth}{1}
{\setstretch{0.8}
\tableofcontents
}

\section*{Introduction}

The aim of this note is to explain within the context of conic
 singular manifolds
how the results on Lipschitz geometry of  algebraic sets of
our recent works \cite{CoGrMi1,CoGrMi3}, compare \cite{CoGrMi2},  
are occurrences of a general simple principle.

A diffeomorphism between two smooth Riemannian manifolds is always a locally
bi-Lipschitz mapping. Yet in
presence of singularities a counterpart statement is not clear.  Thus we investigate this question here for metrically
conic singularities. We will showcase the relations between  conic singular
manifolds, manifolds with  conic metric and their quotient~spaces in Section~\ref{section:GCSM}.

The natural interplay between metrically conic and asymptotically metrically
conic behaviour is presented in Corollary \ref{cor:conic-complet}  
and Proposition \ref{prop:acs-submanif-complet}. Thus  
Lipschitz geometry of globally conic singular manifolds becomes much clearer and results conjectured in \cite{CoGrMi3}
follow in Theorem~\ref{thm:MainAsympConic} which states that
\em every connected globally conic singular sub-manifold of a globally conic
singular manifold is Lipschitz normally embedded, \em i.e. the inner and outer metric structures are equivalent. 

%
%
%
%
%
%
%
%
%
%
%
%
%
%
%
%
%
%
%
%
%
%
%
%
%
%
%
%
%
%
%
%
%
%
%
%
%
%
%
%
%
%
%
%
%
%
%
%
\section{Preliminaries}
Throughout the paper smooth means $C^\infty$-smooth, all manifolds without or with 
boundary are smooth, all Riemannian 
metrics  are assumed smooth.
\subsection{Metric structures of subsets}\label{secPseudoDist}

For convenience we allow a distance function to attain value $\infty$, for instance for points in different
connected components of a set. 
Thus isometries of metric spaces are  connected-component-wise isometries.

A pair $(M,d)$ is a \em pseudo-metric space, \em if the function 
$d:M\times M \to \Rgo\cup\{\infty\}$ is a \em pseudo-distance, \em
i.e. for all $\bx,\bx',\bx''\in M$ it satisfies the following conditions: 
$$
d(\bx,\bx) = 0, \;\; d(\bx,\bx')=d(\bx',\bx)\;\; {\rm and} \;\; 
d(\bx,\bx'')\leq d(\bx,\bx')+ d(\bx',\bx'').
$$
Let $(M,d)$ be a pseudo-metric space. The \em length 
 of an arc $\bc:[a,b]\to M$ \em 
is 
$$
\ell(\bc) = \sup \left|\sum_{i=1}^k d(\bc(t_{i-1}), \bc(t_{i})) \right|,
$$
where the supremum is taken over all partitions $a=t_0<\dots<t_k=b, k\in \N,$ of the interval $I$.
\begin{definition}\label{def:in-out-dist}
Let $(M,d)$ be a pseudo-metric space and $X$ be a subset of $M$. 
	
(i) The \em outer pseudo-metric structure is $(X,d_X)$, \em where $d_X$ is 
the restriction of the pseudo-distance in~$M$ to~$X$.
	
(ii) The \em inner pseudo-metric structure is $(X,d_\inn^X)$, \em where $d_\inn^X$
is the infimum of lengths of arcs within $X$ 
connecting a given pair of points. 

\end{definition}

Whenever the inner (rep. outer) pseudo-distance is a distance on $X$, the inner (resp. outer) structure is naturally an inner (resp. outer) metric structure on $X$.

A mapping $f:(M,d)\to (M',d')$ between pseudo-metric spaces is \em pseudo-Lipschitz \em 
if there exists a positive constant $L$ such that the inequality $d'(f(x),f(y))\leq L d (x,y)$ is satisfied for all $x,y\in M$.

A special type of subsets of a (pseudo-)metric space are those for which their 
outer and inner (pseudo-)metric structure are indistinguishable from the 
(pseudo-)Lipschitz point of view:

\begin{definition}\label{defLNEset}
Let $(M,d)$ be a pseudo-metric space and $X$ be a subset of $M$ such that its inner
and outer pseudo-distances in $X$ are both distance functions. 
	
(i) The subset $X$  is \em Lipschitz normally embedded \em 
(shortened to LNE) in $M$ if its  outer and inner distances are equivalent,
i.e. the identity map $$ \textup{id}: (X, d_X) \to (X,d_\inn^X) $$
is bi-Lipschitz.

(ii) The subset $X$  is \em locally LNE at $\bx$ \em if there 
exists a neighbourhood $U$ of $\bx$ in~$M$ such that $X\cap U$ is LNE in $M$.
	
(iii) The subset $X$   is \em locally LNE \em if it is locally 
LNE at {each point of its closure in $M$}.
	
(iv) The space $(X,d_X)$ is a \em length space \em when 
the identity map of point (i) is an isometry.
\end{definition}
When writing that a subset of a metric space is LNE, it is always understood that
it is LNE with respect to the metric structure  of the given ambient space. By
convention, the empty set is LNE. This notion is the same as Whitney's $P$ condition of 
\cite{Whi1} and Gromov's quasi-convexity of \cite{Gro1}. We use the least ambivalent name of 
Lipschitz normally embedded after the paper~\cite{BiMo}.

\subsection{Singular metrics and pseudo-distances on manifolds}\label{section:pseudo-metric}
Let $(M,\dd M)$ be a smooth manifold with boundary (possibly empty). A \em singular Riemannian 
metric over a manifold $M$ \em is a smooth $2$-symmetric tensor over $M$ everywhere 
positive semi-definite. The \em singular locus $\sing(g)$ \em of the singular 
Riemannian metric $g$ over $M$ is the set of points of $M$ at which $g$ is not 
positive-definite. It is a closed subset of $M$ and $g_{|M\setminus \sing(g)}$ is a Riemannian metric.

Two singular Riemannian metrics $g$ and $h$ over $(M,\dd M)$ are \em equivalent
\em if there exist constants $a,b>0$ such that 
$$
a\,g(\bx)(\xi,\xi) \, \leq \, h(\bx)(\xi,\xi) \, \leq b \,g(\bx)(\xi,\xi), \;\; \forall (\bx,\xi)\in TM.
$$
Given a singular Riemannian metric $g$ over a manifold $M$, the length of a smooth curve $\bc : [a,b] \to M$ is
$$
\ell_g(\bc) = \int_a^b \big(g(\bc'(t),\bc'(t))\big)^\frac{1}{2}\,\rd t.
$$
Then $M$ is equipped with the pseudo-distance $d_g$ obtained by taking the infimum of the 
length of piecewise smooth curves connecting a given pair of points, this pseudo-distance 
is a distance function on $M\setminus \sing(g)$.

\begin{remark}
By for instance \cite{Burago,Pal}, given a length structure $d$ on the manifold $M$, smooth
outside of the diagonal $M\times M$, there exists a Riemannian
metric $g$ on $M$ such that $d=d_g$. By Myers-Steenrod theorem a mapping between Riemannian
manifolds is a smooth Riemannian isometry if and only if it is an
isometry of the underlying metric spaces.
\end{remark}

In particular, a diffeomorphism between smooth manifolds is locally bi-Lipschitz regardless
of the choice of Riemannian metrics. Thus the
Lipschitz structure on a manifold depends only on its smooth structure and is independent on the Riemannian structure (this is
already true for $\cC^1$ regularity). 
\subsection{Collar neighbourhoods and p-sub-manifolds}

Let $(M,\dd M)$ be a smooth manifold with boundary $\dd M$ (possibly empty). 

A \em boundary defining function  in $M$ \em is a
smooth function over $M$, positive in $M\setminus\dd M$, vanishing on $\dd M$ 
and with no critical point in $\dd M$. Such smooth functions always exist. 
When $\dd M$ is compact, given any boundary defining function $x$, there 
exists $\eta >0$ such that $[0,\eta]$ contains no critical value of $x$. 
 
An embedded smooth sub-manifold of $(M,\dd M)$ is a subset $N$ such that
(i) it is a smooth manifold $(N,\dd N)$ with boundary $\dd N$ (possibly 
empty); (ii) its manifold topology coincides with the induced topology from 
$M$; (iii) the inclusion mapping $N \hookrightarrow M$ is a smooth injective 
immersion.

\begin{definition}\label{defpsub}(\cite[Section I.7]{Mel})
	Let $(M,\dd M)$ be a smooth manifold with boundary. A subset $N$ of $M$ is a 
	\em p-sub-manifold \em if: (i) it is a smooth sub-manifold with boundary $\dd 
	N$ of $M$; (ii) $\dd N$ is contained in $\dd M$; (iii) $N$ is transverse to 
	$\dd M$:
	$$
	\bx \in \dd N \; \Longrightarrow \; T_\bx N + T_\bx \dd M = T_\bx M.
	$$
\end{definition}
The smooth cylinder over a manifold $N$ of height $\eta$ is 
$$\Cyl(N,\eta) := N \times 
[0,\eta)$$ 
and the open cylinder over $N$ is
$$\Cyl(N,\eta)^o := \Cyl(N,\eta)\setminus (N\times \{0\}) = N \times 
(0,\eta)$$
We recall the collar neighbourhood result below, proof can be found for instance in \cite{CoGrMi3}.
\begin{propdef}\label{prop:collar-ngbhd} 
Let $(M,\dd M)$ be a manifold with smooth compact boundary.  
Let $x$ be a boundary defining function. There exist $\eta >0$ such 
that the function $x$ has no critical value in $[0,\eta]$, an open 
neighbourhood 
$U_x^\eta$ of $\dd M$ in $M$ and a smooth diffeomorphism
\begin{equation}\label{eq:collar-nghbhd}
\phi_x^\eta : U_x^\eta := M\cap\{x<\eta\} \to \Cyl(\dd M,\eta), \;\; 
\bz \mapsto (\by(\bz),x(\bz)),
\end{equation}
which we call the \em collar neighbourhood diffeomorphism. \em 
\end{propdef}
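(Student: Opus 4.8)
The plan is to realise $\phi_x^\eta$ as the inverse of the time-flow of a suitably normalised gradient field of $x$, so that the second coordinate of $\phi_x^\eta$ is $x$ by construction. Fix any Riemannian metric $g$ on $M$ and let $\nabla x$ be its $g$-gradient. Since $\dd M$ is compact and $x$ has no critical point on $\dd M$, the $\eta>0$ already produced (no critical value of $x$ in $[0,\eta]$) ensures $\nabla x$ is nowhere zero on $\{0\le x\le\eta\}$. On this region I would introduce the vector field
\[
V \;=\; \frac{\nabla x}{g(\nabla x,\nabla x)},
\]
so that $\rd x(V)=g(\nabla x,V)\equiv 1$; thus $x$ grows at unit rate along the integral curves of $V$, and $V$ points into $M$.

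Next I would take the flow $\Phi_t$ of $V$ and set $\Psi(p,t):=\Phi_t(p)$ for $p\in\dd M$. Because $x\circ\Phi_t$ has derivative $\rd x(V)=1$ and $x$ vanishes on $\dd M$, we get $x(\Psi(p,t))=t$ for all $p\in\dd M$; this is exactly the identity forcing the second coordinate of the sought diffeomorphism to be $x$. At a boundary point the differential $\rd\Psi_{(p,0)}$ is an isomorphism: along $\dd M\times\{0\}$ the map $\Psi$ restricts to the identity, while the $\dd_t$-direction is sent to $V(p)$, which is transverse to $T_p\dd M=\ker\rd x_p$ because $\rd x(V)=1\neq 0$. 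Hence, by the inverse function theorem, $\Psi$ is a local diffeomorphism near each point of $\dd M\times\{0\}$.

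I would then upgrade this to a genuine diffeomorphism on a uniform collar. Compactness of $\dd M$ yields a single $\eta>0$ for which the flow is defined on all of $\dd M\times[0,\eta)$ and $\Psi$ is a local diffeomorphism there. Injectivity follows from two standard facts: distinct integral curves of $V$ are disjoint (uniqueness of solutions of the defining ODE, run backwards to $\dd M$), and $x=t$ is strictly monotone along each curve, so points on the same curve at different times differ. Thus $\Psi$ is a diffeomorphism from $\dd M\times[0,\eta)$ onto an open neighbourhood $W$ of $\dd M$, and I would set $\phi_x^\eta:=\Psi^{-1}$, whose first coordinate $\by(\bz)$ is the endpoint on $\dd M$ of the backward $V$-trajectory through $\bz$ and whose second coordinate is $x(\bz)$, as required.

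The main obstacle is the final identification $U_x^\eta=M\cap\{x<\eta\}$, i.e. that the collar $W$ may be taken to be exactly the sublevel set. One inclusion is immediate, since $x(\Psi(p,t))=t<\eta$ on the collar. The reverse inclusion is delicate: a priori $\{x<\eta\}$ could contain points far from the compact boundary that nonetheless have small $x$-value, so one must know that, after shrinking $\eta$, the sublevel sets $\{x<\eta\}$ form a neighbourhood basis of $\dd M$ contained in $W$. I expect this to rest on compactness of $\dd M$ together with the control of $x$ near the boundary in the ambient conic setting (equivalently, properness of $x$ on a neighbourhood of $\dd M$, which makes $x:\{x\le\eta\}\to[0,\eta]$ a proper submersion and supplies, via Ehresmann's fibration theorem, an alternative route to the same trivialisation). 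Granting this, every point of $\{x<\eta\}$ lies on a $V$-trajectory reaching $\dd M$ in time $<\eta$, so $\{x<\eta\}\subseteq W$ and the two sets coincide.
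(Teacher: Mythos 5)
Your construction is the standard proof of the collar neighbourhood theorem, and the paper itself offers no proof to compare against: it defers to \cite{CoGrMi3}, whose argument is this same normalised-flow construction. The core steps as you wrote them are sound: normalising a gradient so that $\rd x(V)=1$ forces the second coordinate to be $x$; uniqueness of integral curves together with strict monotonicity of $x$ along them gives injectivity; compactness of $\dd M$ gives a uniform existence time and upgrades the local diffeomorphism to a diffeomorphism onto an open neighbourhood $W$ of $\dd M$.

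The point you flag at the end, however, is not merely delicate: the identification $U_x^\eta=M\cap\{x<\eta\}$ is \emph{not provable} from the stated hypotheses, because it is false for them in general. Take $M=\bS^1\times[0,\infty)$ with $x(\by,t)=t$ for $t\le 1$ and $x(\by,t)=1/t$ for $t\ge 3$, smoothly interpolated and everywhere positive; this is a boundary defining function whose critical values are bounded away from $0$, yet for every small $\eta$ the sublevel set $\{x<\eta\}$ has a second connected component near infinity, so it cannot be diffeomorphic to $\Cyl(\bS^1,\eta)$. In your language, the backward $V$-trajectories starting in that far component escape to infinity in finite time instead of reaching $\dd M$ (for $x=1/t$ one gets $V=-t^2\,\dd_t$, whose backward flow blows up). So the missing ingredient is exactly the one you guessed: properness of $x$ on a neighbourhood of $\dd M$ (equivalently compactness of the sublevel sets $\{x\le\eta\}$, which holds automatically when $M$ is compact, and in all the situations where the paper uses the result, e.g. when $x$ is the conic distance to the boundary). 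Granting it, your argument closes immediately: the backward flow from any point of $\{x<\eta\}$ stays in the compact set $\{x\le\eta\}$, hence exists until $x$ reaches $0$, giving $\{x<\eta\}\subseteq W$; the Ehresmann route you mention works equally well. In short, your proof is the right one, but the final step is not a technicality you may defer: it requires a properness hypothesis that is implicit in the statement and should be made explicit, and the same hypothesis is also what guarantees the opening clause that $[0,\eta]$ contains no critical value, which can likewise fail for noncompact $M$ when critical values accumulate at $0$ from far away.
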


\subsection{Spherical blowing-up}\label{section:blup}
Let $(M,g)$ be a Riemannian manifold possibly with non-empty boundary $\dd M$.
Let $\ba$ be a point of $M\setminus \dd M$. There exists a positive radius $\ve$
such that the exponential map
$$
\exp_\ba: B(\ba,\ve) \to B_g(\ba,\ve)
$$
centred at $\ba$ maps diffeomorphically the open $\ve$-ball $B(\ba,\ve)$ of 
$T_\ba M$ onto the open $\ve$-geodesic ball $B_g(\ba,\ve)$ of $M$. 
Let $\bS_\ba$ be the unit-sphere of $T_\ba M$. The following
mapping 
$$
\rho_\ba : \bS_\ba \times [0,1) \to B(\ba,\ve), \;\; 
(\bu,t) \to \ve\,t\,\bu. 
$$ 
is smooth.

The blowing-up of the Riemannian manifold $(M,g)$ centred at $\ba$ is
the smooth surjective mapping 
\begin{equation}\label{eq:blow-up}
\bbl_\ba^M : [M,\ba] \to M
\end{equation}
where $[M,\ba]$ is a smooth manifold with boundary $\bS_\ba\times 0\sqcup\dd M$. It is obtained as the gluing of $\bS_\ba\times [0,1)$ with $M\setminus \ba$ 
identifying $\bS_\ba\times (0,1)$ with the pointed open ball 
$\bB_g(\ba,\ve)\setminus\ba$ via $\exp_\ba \circ \rho_\ba$.
The mapping $\bbl_\ba$ is just the quotient mapping of the gluing.

Therefore, the spherical blow up $\bbl_\ba$  maps $\bS_\ba\times 0$ onto $\ba$
and induces a diffeomorphism $[M,\ba]\setminus \bS_\ba\times 0  \to 
M\setminus \ba$. 
The pull-backed metric $(\bbl_\ba^M)^*g$ is a singular Riemannian metric over 
$[M,\ba]$ with singular locus $\bS_\ba\times 0$.
In particular, given two Riemannian metrics $g$ and $h$ over~$M$, the spherically blown-up manifolds
$[(M,g),\ba]$ and $[(M,h),\ba]$ are diffeomorphic as manifolds with boundary.
\subsection{Conic metrics}\label{sec:conic}
Let $(M,\dd M)$ be a smooth manifold with compact boundary. 
We present below the classical material of \cite{Che1,Che2} with some additional notions necessary for this paper.

\begin{definition}\label{def:conic-manif}
A \em conic metric $g^c$ \em on the smooth manifold $(M,\dd M)$	is a singular
Riemannian metric over $M$ with $\sing(g^c) = \dd M$,  for which there exists a
smooth collar neighbourhood diffeomorphism
$$
\vp = \phi_r^\eta : M\cap \{r < \eta \} \to \Cyl(\dd M,\eta)
$$
such that
\begin{equation}\label{eq:con-met}
\vp_*g^c = \rd r^2 + r^2 G(r,\by;\rd r, \rd \by)
\end{equation}
and $\by \to G(0,\by)|_{0\times \dd M}$ is a Riemannian metric over $\dd M$. 	
When $g^c$ is a conic metric, the triple $(M,\dd M,g^c)$ is said to be a \em manifold with a conic metric. \em 
\end{definition}
Note that the singular locus of conic metrics lies on the boundary of the manifold 
and if $(S,\dd S)$ is a p-sub-manifold of the manifold with conic metric $(M,\dd 
M,g^c)$, then $(S,\dd S,g^c|_S)$ is a manifold with a conic metric.
\begin{definition}
The pseudo-distance $d^c:=d_{g^c}$ over $M$ induced by the conic metric $g^c$
is the \em conic distance\em. For a sub-manifold $S$ of $M$, the 
pseudo-distance obtained by restricting to $S$ the conic distance $d^c$ is
its outer conic distance whereas the pseudo-distance obtained from the conic 
metric tensor $g^c$ restricted to $S$ is its inner pseudo-distance.
\end{definition}
The following special cases of conic metrics will be of great use:
\begin{definition}
(1) Let $N$ be a compact manifold without boundary and let $\eta >0$. A \em model 
conic metric \em on the cylinder $\Cyl(N,\eta)$ is a singular Riemannian metric 
$g_\mdl^c$ over $\Cyl(N,\eta)$ of the following form
\begin{equation}\label{eq:mod-con-met}
g_\mdl^c = \rd r^2 + r^2 g^\dd(r,\by; \rd \by)
\end{equation}
where $r \to g^\dd(r)$ is a smooth family of Riemannian metrics over $N$.
\\
(2) A \em simple conic metric \em over the manifold $(M,\dd M)$ with compact 
boundary is any conic metric $g^c$ for which there exists  a collar neighbourhood
diffeomorphism $\vp : U_r^\eta \to \Cyl(\dd M,\eta)$ and a Riemannian metric 
$g_{\dd M}$ over $\dd M$ such that
$$
\vp_* g^c = \rd r^2 + r^2\,g_{\dd M}.
$$
\end{definition}
Any model conic  metric on a cylinder  $\Cyl(N,\eta)$ over 
$N=\dd M$  extends as a conic metric onto $(M,\dd M)$ and there is no other type 
of conic metrics. More precisely:
\begin{theorem}[Theorem 1.2, \cite{MeWu}]\label{thm:MelWun}
Let $(M,\dd M,g^c)$ be a manifold with conic metric. There exists a collar 
neighbourhood diffeomorphism $\vp:U \to \Cyl(\dd M,\eta)$ such that $\vp_*g^c$ is 
a model conic metric over $\Cyl(\dd M,\eta)$. 
\end{theorem}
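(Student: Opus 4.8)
The plan is to produce the desired collar by constructing geodesic normal coordinates based at $\dd M$: to use as new radial variable the $g^c$-distance to the boundary and to spread the boundary coordinates along the unit-speed geodesics meeting $\dd M$ orthogonally. Starting from the defining collar of Definition~\ref{def:conic-manif}, expand
$$
\vp_* g^c = (1+r^2 a)\,\rd r^2 + 2 r^2 b_i\, \rd r\, \rd y^i + r^2 h_{ij}\, \rd y^i \rd y^j,
$$
with $a, b_i, h_{ij}$ smooth in $(r,\by)$ and $(h_{ij}(0,\by))$ positive definite. Completing the square in $\rd r$ shows that the cross terms together with the deviation of the $\rd r^2$-coefficient from $1$ are precisely what must be absorbed, while the $\rd y\,\rd y$-block is already $r^2$ times a smooth family of metrics. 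The model form $\rd r^2 + r^2 g^\dd(r,\by)$ is then recognised as the assertion that, in the right radial variable and after the right transport of the angular coordinates, the cross terms vanish, the radial coefficient is identically $1$, and the angular block stays exactly $r^2$ times a smooth positive-definite family down to $r=0$.

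First I would construct the radial function. A direct computation of the inverse metric gives $g^{rr} = \big(1 + r^2 a - r^2 b^\top h^{-1} b\big)^{-1} = 1 + O(r^2)$, a smooth function on the cylinder equal to $1$ along $\dd M$. Hence the eikonal (Hamilton--Jacobi) equation $g^c(\nabla \tr, \nabla \tr) = 1$ admits a solution $\tr$ that is a smooth boundary defining function, with $\tr = r + O(r^3)$; equivalently $\tr$ is the $g^c$-distance to $\dd M$, which is smooth because the data on the cylinder are smooth and $\dd M$ is compact, the latter also furnishing a uniform collar width $\eta>0$.

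Next I would invoke Gauss's Lemma. Flowing $\dd M$-coordinates $\bw$ along the integral curves of $\nabla \tr$ (the unit-speed normal geodesics) yields a diffeomorphism $\Cyl(\dd M,\eta) \to U$ in which $\partial_{\tr} = \nabla \tr$, so that $g^c(\partial_{\tr},\partial_{\tr}) = 1$ and $g^c(\partial_{\tr}, \partial_{w^i}) = \rd\tr(\partial_{w^i}) = 0$. Thus in these coordinates
$$
\vp_* g^c = \rd \tr^2 + k_{ij}(\tr,\bw)\, \rd w^i \rd w^j,
$$
with neither cross terms nor a nontrivial radial coefficient. It then remains to check $k_{ij} = \tr^2 g^\dd_{ij}(\tr,\bw)$ with $g^\dd$ a smooth positive-definite family up to $\tr = 0$, which recovers the model conic form and identifies $g^\dd(0,\cdot)$ with the link metric $h(0,\cdot)$.

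The main obstacle is the regularity at the singular locus. Classical Fermi/normal-coordinate theory applies to a nondegenerate hypersurface lying at positive distance, whereas here the ``hypersurface'' is the singular set $\dd M = \sing(g^c)$ along which $g^c$ collapses: the geodesic flow degenerates and the distance function must be controlled uniformly as $\tr \to 0$. I would handle this by working on the cylinder (equivalently, after the spherical blow-up of Section~\ref{section:blup}), where the rescaled metric $\tr^{-2} k$ and the geodesic equations acquire smooth coefficients in the defining variable; an ODE/Taylor analysis in $\tr$ along the normal geodesics then shows that $k_{ij}/\tr^2$ extends smoothly and positive-definitely to $\tr=0$, and that the coordinate change is a genuine collar diffeomorphism of $\Cyl(\dd M,\eta)$. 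Establishing this smoothness up to the degenerate boundary, rather than merely on $\tr>0$, is the crux of the argument.
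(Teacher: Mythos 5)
The paper does not actually prove Theorem~\ref{thm:MelWun}: it imports it verbatim from Melrose--Wunsch \cite{MeWu}. Your outline is, as it happens, the same strategy as that source follows --- take the $g^c$-distance to $\dd M$ as the new radial variable by solving an eikonal equation, transport boundary coordinates along its gradient flow, and invoke the Gauss lemma to kill the cross terms. So the issue is not a difference of route; it is that your write-up stops exactly where the theorem begins.

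Everything you carry out explicitly (the block decomposition of $\vp_*g^c$, the computation $g^{rr}=1+O(r^2)$, the Gauss-lemma normal form $\rd\tr^2+k_{ij}\,\rd w^i\rd w^j$) is the classical Fermi-coordinate argument, which is valid only for a hypersurface along which the metric is non-degenerate. Here $\dd M=\sing(g^c)$, and the two assertions that constitute the actual content of the theorem are left unsupported. First, you claim the eikonal equation ``admits a solution $\tr$ that is a smooth boundary defining function,'' smooth ``because the data on the cylinder are smooth and $\dd M$ is compact.'' That is not a reason: the angular block of the inverse metric blows up like $r^{-2}$, so the Hamiltonian flow generating the characteristics degenerates along the initial hypersurface $\{r=0\}$, and neither existence nor smoothness up to $r=0$ follows from the standard non-characteristic Hamilton--Jacobi theory (nor does smoothness of the distance function follow from a tubular-neighbourhood argument, since the normal exponential map of $\dd M$ is undefined when $g^c$ degenerates there). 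A complete argument has to exploit the structure of the singularity --- e.g.\ observe that an ansatz $\tr=r(1+O(r^2))$ renders the singular term $r^{-2}h^{ij}\partial_{y^i}\tr\,\partial_{y^j}\tr$ of size $O(r^2)$, solve the resulting equation to infinite order in its Taylor expansion at $r=0$, and then correct the error --- which is precisely what the cited proof does. Second, your final step, that ``an ODE/Taylor analysis in $\tr$ along the normal geodesics then shows that $k_{ij}/\tr^2$ extends smoothly and positive-definitely to $\tr=0$,'' is a restatement of the conclusion \eqref{eq:mod-con-met}, not an argument: no ODE is exhibited, no expansion is computed, and no reason is given for the positivity of the limit. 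You correctly identify this regularity at the degenerate boundary as ``the crux of the argument,'' but identifying the crux is not the same as resolving it; as written, the proposal is an accurate road map of the known proof rather than a proof.
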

\begin{example}\label{ex:conic}
1) Let $\bbl_\bbo : [\Rn,\bbo] = \bS^{n-1}\times\Rgo \to \Rn$ be the 
spherical blowing-up of $\Rn$. The pull-back $\bbl_\bbo^*(eucl)$ of the 
Euclidean metric $eucl$ over the blown-up space $[\Rn,\bbo]$ is the 
simplest example of a model conic metric 
$$
\bbl_\bbo^*(eucl) = \rd r^2 + r^2 \rd \bu^2.
$$
2) Warped metrics  over 
$\Cyl(N,\eta)$ are model conic metrics, these are singular metrics of the form
$$
\rd r^2 + (rf(r))^2 g_N
$$
where $f$ is a smooth positive function over $[0,\eta)$ and
$g_N$ is a Riemannian metric over $N$, see also \cite{GrLy}.
\end{example}
An essential by-product of the demonstration of 
Theorem~\ref{thm:MelWun} is the following
\begin{corollary}\label{cor:bound-lev}
If $d^c$ is a conic distance, then the function $d^c(-,\dd M)$ is a boundary defining function. In particular there exists a positive $r_0$ such that  each level $\{d^c(-,\dd M) =r\}$ is diffeomorphic to $\dd M$ for any $0\leq r\leq r_0$.
\end{corollary}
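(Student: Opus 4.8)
The plan is to prove that, in a small enough collar of $\dd M$, the conic distance to the boundary is literally the radial coordinate of the Melrose--Wunsch normal form, and then to read off the level sets. First I would apply Theorem~\ref{thm:MelWun} to fix a collar diffeomorphism $\vp : U \to \Cyl(\dd M,\eta)$ with
\begin{equation*}
	\vp_* g^c = \rd r^2 + r^2\, g^\dd(r,\by;\rd \by),
\end{equation*}
so that $\dd M$ becomes the slice $\{r=0\}$ and $r$ is itself a boundary defining function on $U$. The decisive feature of this normal form is the absence of $\rd r\,\rd \by$ cross terms: the squared speed of a curve $t\mapsto (\by(t),r(t))$ splits as $r'(t)^2 + r(t)^2\, g^\dd(\by'(t),\by'(t))$, and since $g^\dd$ is positive semi-definite this is bounded below by $r'(t)^2$.

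I would then match an upper and a lower estimate for $d^c(\bz,\dd M)$ at a point $\bz$ with $r(\bz)=r_0$. For the upper bound, the radial curve $s\mapsto \vp^{-1}(\by_0,s)$, $s\in[0,r_0]$, joins $\bz$ to the boundary point $\vp^{-1}(\by_0,0)$ with length $\int_0^{r_0}\rd s = r_0$, so $d^c(\bz,\dd M)\le r_0$. For the lower bound, the inequality above shows that the length of any curve is at least the total variation of its $r$-coordinate; hence a curve that stays in $U$ and reaches $\{r=0\}$ has length at least $r_0$.

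The one delicate point --- and the main obstacle --- is to exclude shorter paths that wander outside the collar, where $r$ and the normal form are unavailable. I would handle this by restricting to $r_0\le \eta/2$: any curve leaving $U$ must first meet $\{r=\eta\}$, and by the same total-variation estimate the arc up to that first meeting already has length at least $\eta - r_0\ge r_0$, so leaving the collar cannot produce a shorter connection to $\dd M$. Putting the three observations together yields the identity $d^c(\bz,\dd M)=r(\bz)$ on the neighbourhood $\{r\le \eta/2\}$.

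It remains only to translate this identity. On $\{r\le\eta/2\}$ the function $d^c(-,\dd M)$ equals the boundary defining function $r$, hence near $\dd M$ it is smooth, vanishes exactly on $\dd M$, is positive elsewhere and has no critical point along $\dd M$; this is the asserted boundary-defining-function behaviour. Finally, setting $r_0 := \eta/2$, for any $t\in[0,r_0]$ the level set $\{d^c(-,\dd M)=t\}$ coincides with $\{r=t\}=\vp^{-1}(\dd M\times\{t\})$, which the collar diffeomorphism carries onto $\dd M\times\{t\}$ and is therefore diffeomorphic to $\dd M$, as claimed.
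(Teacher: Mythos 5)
Your proof is correct, and it supplies an argument where the paper supplies none: the paper states this corollary as ``an essential by-product of the demonstration of Theorem~\ref{thm:MelWun}'', i.e.\ it defers to the internals of the Melrose--Wunsch construction in \cite{MeWu}, in which the collar is built precisely so that the radial coordinate is the distance to the boundary. You instead treat Theorem~\ref{thm:MelWun} as a black box and recover the identity $d^c(-,\dd M)=r$ on $\{r\le \eta/2\}$ from the statement of the normal form alone, via three elementary length estimates: the radial test curve for the upper bound, the total-variation-of-$r$ bound for curves staying in the collar (here the absence of $\rd r\,\rd\by$ cross terms is exactly what the normal form buys you), and the exit estimate for curves leaving the collar. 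This makes the corollary self-contained relative to the paper. Note that what neither you nor the paper's citation establishes is smoothness of $d^c(-,\dd M)$ far from $\dd M$ (the literal reading of ``is a boundary defining function''); but the near-boundary statement you prove is all that is ever used, e.g.\ in Lemma~\ref{lem:bound-lev}, where the function is only required to \emph{extend} as a boundary defining function.

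One step should be made explicit rather than asserted: the coincidence $\{d^c(-,\dd M)=t\}=\{r=t\}$ for $t\le\eta/2$. Your identity $d^c(-,\dd M)=r$ on $\{r\le\eta/2\}$ gives only the inclusion $\{r=t\}\subseteq\{d^c(-,\dd M)=t\}$; for the reverse inclusion you must rule out points $\bz$ with $r(\bz)>\eta/2$, or outside the collar altogether, having distance exactly $t\le \eta/2$ to $\dd M$. This follows from your own machinery: any piecewise smooth curve from such a $\bz$ to $\dd M$ must cross $\{r=\eta/2\}$, the sub-arc after its \emph{last} crossing stays in $\{r\le\eta/2\}$ (intermediate value theorem) and so has length at least $\eta/2\ge t$, while the initial portion up to the first crossing has length at least $\min\bigl(r(\bz)-\eta/2,\,\eta/2\bigr)>0$; hence $d^c(\bz,\dd M)>\eta/2\ge t$ and $\bz$ is not in the level set. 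So the omission is harmless and is repaired by the same total-variation argument you already use, but as written the equality of levels is claimed rather than proved.
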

Moreover, for any two conic metrics $g$ and $h$ over $(M,\dd M)$ there exists a 
collar neighbourhood $U$ of $\dd M$ such that $g$ and $h$ restricted to $U$ are 
equivalent and thus the identity mapping  $\textup{id}: (U,d_g)\to(U,d_h)$ is 
pseudo-bi-Lipschitz with respect to the conic distances $d_g$ and $d_h$, see 
Section~\ref{section:pseudo-metric}. In particular, we get
\begin{corollary} \label{cor:conicequivalent}
Any two conic metrics over a compact manifold  are equivalent.
\end{corollary}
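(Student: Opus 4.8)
The plan is to prove Corollary~\ref{cor:conicequivalent}: any two conic metrics $g$ and $h$ over a compact manifold $(M,\dd M)$ are equivalent, meaning there exist constants $a,b>0$ with $a\,g(\bx)(\xi,\xi)\le h(\bx)(\xi,\xi)\le b\,g(\bx)(\xi,\xi)$ for all $(\bx,\xi)\in TM$. The natural strategy is to split $M$ into two regions: a collar neighbourhood of $\dd M$, where the conic degeneration lives and must be handled carefully, and the complement, which is a compact manifold away from the singular locus where $g$ and $h$ are both genuine Riemannian metrics.

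First I would invoke the paragraph immediately preceding the statement (the one citing the by-product of Theorem~\ref{thm:MelWun}), which already asserts that for any two conic metrics $g$ and $h$ there exists a collar neighbourhood $U$ of $\dd M$ on which $g|_U$ and $h|_U$ are equivalent. This gives constants $a_1,b_1>0$ realizing the two-sided bound on $U$, so the hard analytic part near the cone point is handed to us. The underlying reason, which I would spell out briefly, is that in the model-conic normal form \eqref{eq:mod-con-met} both metrics take the shape $\rd r^2 + r^2 g^\dd(r)$ with $r\to g^\dd(r)$ a smooth family of Riemannian metrics on the compact boundary $\dd M$; the radial part is identical and the spherical parts are compared uniformly by the compactness of $\dd M\times[0,\eta]$, with no vanishing incurred because the common factor $r^2$ cancels in the ratio.

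Next I would handle the complement. Choosing the collar $U=U_r^\eta$ from Proposition-Definition~\ref{prop:collar-ngbhd}, the set $K:=M\setminus U_r^{\eta/2}$ (say) is a compact manifold on which neither $g$ nor $h$ is singular, since $\sing(g)=\sing(h)=\dd M\subset U_r^{\eta/2}$. On the compact set of unit-$g$-vectors over $K$ inside $TM$, the continuous positive function $\xi\mapsto h(\bx)(\xi,\xi)$ attains a positive minimum and a finite maximum, yielding constants $a_2,b_2>0$ with $a_2\,g\le h\le b_2\,g$ over $K$ by the usual homogeneity-in-$\xi$ argument. Taking $a=\min(a_1,a_2)$ and $b=\max(b_1,b_2)$ over the overlap gives the bound on all of $M=U\cup K$.

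The main obstacle is purely conceptual rather than computational: one must be certain that the degeneration of the metrics at $\dd M$ does not spoil the bound, and that is precisely what the collar step controls by exhibiting the common $r^2$ factor. I expect the compact-complement step to be entirely routine, while the collar step rests on the cited consequence of Theorem~\ref{thm:MelWun}; were that not available, the hard part would be verifying the uniform comparison of the spherical families $g^\dd(r)$ and $h^\dd(r)$ as $r\to 0$, which again reduces to compactness of $\dd M$ together with smoothness of the families down to $r=0$.
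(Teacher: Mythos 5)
Your proposal is correct and takes essentially the same route as the paper: the paper obtains the corollary directly (``In particular, we get'') from the preceding paragraph's collar-neighbourhood equivalence of the two conic metrics, leaving the routine compactness comparison away from $\dd M$ implicit, which you simply spell out. One small caution: your heuristic justification of the collar step (cancelling the common factor $r^2$) glosses over the fact that the model forms of $g$ and $h$ are taken with respect to possibly \emph{different} collar diffeomorphisms and radial coordinates, but since you formally rest that step on the cited by-product of Theorem~\ref{thm:MelWun} rather than on the heuristic, this is not a gap.
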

\begin{propdef}\label{propdef:associated}
Any conic metric $g^c$ on the manifold $M$ has 
(1) \em an associated Riemannian metric on the boundary \em, i.e. a Riemannian 
metric $g^\dd:=G_{|\dd M}$ on $\dd M$ of representation~\eqref{eq:con-met}, and 
(2) \em an associated simple conic metric \em near the boundary $\dd M$,  i.e. a 
simple conic metric $h$ on a neighbourhood $U$ of $\dd M$ such that the singular metrics $g^c$ and $h$ are equivalent over $U$ and have the same associated Riemannian metric on the boundary.
\end{propdef}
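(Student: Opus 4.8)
The plan is to extract both associated objects from a single well-chosen collar, and then to reduce the equivalence claim to a comparison of the \emph{angular} parts of the two metrics on the compact boundary. For part (1) I would invoke Theorem~\ref{thm:MelWun} to fix a collar neighbourhood diffeomorphism $\vp : U \to \Cyl(\dd M,\eta)$ for which $\vp_* g^c$ is a model conic metric, so that
$$
\vp_* g^c = \rd r^2 + r^2\, g^\dd(r,\by;\rd \by),
$$
with $r \mapsto g^\dd(r)$ a smooth family of Riemannian metrics on $\dd M$. The associated Riemannian metric on the boundary is then simply $g^\dd := g^\dd(0)$; it is positive-definite, hence genuinely Riemannian, since it is the angular part at $r=0$ of a model conic metric, matching the requirement on $G_{|\dd M}$ in Definition~\ref{def:conic-manif}. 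Nothing beyond unpacking the model form is needed here.

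For part (2) I would define the candidate simple conic metric by freezing the angular part at $r=0$: set
$$
\vp_* h := \rd r^2 + r^2\, g^\dd
$$
and transport it to $U$ via $\vp^{-1}$. By construction $\vp_* h$ has exactly the shape required of a simple conic metric, so $h$ is a simple conic metric on $U$, and its associated Riemannian metric on the boundary is precisely $g^\dd$, the same as that of $g^c$. It then remains only to establish that $g^c$ and $h$ are equivalent over a (possibly smaller) neighbourhood of $\dd M$.

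Since $\vp$ is a diffeomorphism, equivalence of $g^c$ and $h$ over $U$ amounts to equivalence of $\vp_* g^c$ and $\vp_* h$ over $\Cyl(\dd M,\eta)$. Decomposing a tangent vector at a point of the cylinder into its radial component along $\dd r$ and its angular component in $T\dd M$, the radial parts of $\vp_* g^c$ and $\vp_* h$ coincide (both equal to $\rd r^2$, with no cross terms in the model form), so the whole comparison collapses to comparing the two angular forms $g^\dd(r)$ and $g^\dd(0)$, weighted by the common factor $r^2$. The key step, which I expect to be the only substantive point of the argument, is the uniform equivalence
$$
a\, g^\dd(0) \;\leq\; g^\dd(r) \;\leq\; b\, g^\dd(0) \qquad \text{on } T\dd M,\ r\in[0,\eta'],
$$
for some constants $0<a\leq b$ and some $0<\eta'\leq\eta$. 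This follows from the compactness of $\dd M$ together with the smoothness of the family $r\mapsto g^\dd(r)$: the function $(\by,v,r)\mapsto g^\dd(r)(\by)(v,v)/g^\dd(0)(\by)(v,v)$ is continuous and strictly positive on the compact unit sphere bundle of $(\dd M,g^\dd(0))$ times $[0,\eta']$, hence bounded above and below by positive constants. Multiplying this angular estimate by $r^2$ and adding the identical radial term $\rd r^2$ yields the equivalence of $\vp_* g^c$ and $\vp_* h$ with constants $\min(1,a)$ and $\max(1,b)$, which completes the proof after shrinking $U$ to $\{r<\eta'\}$.
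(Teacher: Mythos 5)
Your proof is correct and follows essentially the same route as the paper: both invoke Theorem~\ref{thm:MelWun} to put $g^c$ in model form on a collar and obtain the simple conic metric by freezing the angular family at $r=0$. The only difference is that you justify the equivalence of $g^\dd(r)$ and $g^\dd(0)$ explicitly via compactness of the unit sphere bundle of $\dd M$, whereas the paper asserts this equivalence over a smaller cylinder and then closes by extending $\vp^*g_0^c$ to a conic metric on all of $(M,\dd M)$ and citing Corollary~\ref{cor:conicequivalent}; your direct estimate is a self-contained substitute for that step.
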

\begin{proof}
Let $g^c$ be a conic metric over $(M,\dd M)$. By definition $g^\dd:=G_{|\dd M}$ is Riemannian over $\dd M$. Let  $\vp : U_r^\eta \to 
\Cyl(\dd M,\eta)$ be a collar neighbourhood diffeomorphism of Theorem~\ref{thm:MelWun} such that 
$\vp_*g^c$ is a model conic metric
$$
\vp_* g^c = \rd r^2 + r^2 g^\dd(r).
$$
Thus $g^\dd(0)$ is a Riemannian metric over $\dd M$, and the model conic metric 
$$
g_0^c = \rd r^2 + r^2 g^\dd(0)
$$
is equivalent to $g^c$ over a smaller cylinder $\Cyl(\dd M,\eta')$ for
some positive height $\eta' < \eta$. Since $\vp^* g_0^c$  extends as a
conic metric $g^s$ over $(M,\dd M)$, the proof ends by Corollary~\ref{cor:conicequivalent}.
\end{proof}

The following, though elementary, is a useful variation of the law 
of cosines in the conic context:
\begin{proposition}\label{prop:conic-dist}
Let $N$ be a 
compact smooth manifold without boundary. Let 
$$g^c= \rd r^2 + r^2 g^\dd(r)$$
be a model conic metric on $\Cyl(N,\eta)$. Denote by $d^c$ the conic distance on $\Cyl(N,\eta)$ and by $d_N$ the distance on $N$ induced by $g^\dd(0)$. Then there exist positive constants $A^c,B^c$, depending only on $g^c$ and $\eta$,  
such that
\begin{equation}\label{eq:conic-dist}
A^c (|r-r'| + \min(r,r')\,d_N(\by,\by'))
\;\leq\; d^c(\bx,\bx') \;\leq\;
B^c(|r-r'| + \min(r,r')\,d_N(\by,\by'))
\end{equation}
for all $\bx =(\by,r)$ and $\bx' = (\by,r')$ of $\Cyl(N,\eta)$.
\end{proposition}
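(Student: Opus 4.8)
The plan is to reduce the statement to a simple model conic metric and then to estimate the conic distance by pairing an explicit competitor curve against a lower bound obtained by treating the radial and the spherical contributions separately. Since $N$ is compact and $r\mapsto g^\dd(r)$ is a smooth family of Riemannian metrics, the metrics $g^\dd(r)$ and $g^\dd(0)$ are uniformly equivalent on $\Cyl(N,\eta)$, with equivalence constants depending only on $g^c$ and $\eta$; this is exactly where the $\eta$-dependence enters. Consequently $g^c$ is equivalent to the simple conic metric $g_0^c=\rd r^2+r^2g^\dd(0)$, the two conic distances are bi-Lipschitz equivalent with constants of the same dependence, and it suffices to prove \eqref{eq:conic-dist} for $g_0^c$, whose spherical part is the fixed metric $g^\dd(0)$ inducing $d_N$. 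Throughout I write $s:=\min(r,r')$ and, reversing the curve if necessary, I assume the smaller radius is attained at the initial point, so that $r=s\le r'$.

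For the upper bound I would take the concatenation of the spherical arc joining $(\by,s)$ to $(\by',s)$ inside the level $N\times\{s\}$, whose $g_0^c$-length equals $s\,d_N(\by,\by')$, with the radial segment from $(\by',s)$ to $(\by',r')$, of length $r'-s=|r-r'|$. This competitor gives $d^c(\bx,\bx')\le |r-r'|+s\,d_N(\by,\by')$ for $g_0^c$, hence the right-hand inequality with a constant $B^c$ after restoring the equivalence constants.

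For the lower bound, let $\bc(t)=(\by(t),r(t))$ be any piecewise smooth curve joining the two points. The inequality $g_0^c\ge \rd r^2$ gives at once $\ell_{g_0^c}(\bc)\ge |r-r'|$. For the spherical term I would argue by dichotomy. If $r(t)\ge s/2$ along the whole curve, then $\ell_{g_0^c}(\bc)\ge \int r(t)\,|\dot\by(t)|_{g^\dd(0)}\,\rd t\ge \tfrac{s}{2}\int|\dot\by(t)|_{g^\dd(0)}\,\rd t\ge \tfrac{s}{2}\,d_N(\by,\by')$, since the projected curve on $N$ has $g^\dd(0)$-length at least $d_N(\by,\by')$. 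Otherwise the curve descends from radius $s$ to some value below $s/2$, so its radial variation, and hence its length, is at least $s/2\ge \tfrac{1}{2\operatorname{diam}(N)}\,s\,d_N(\by,\by')$, using that $d_N(\by,\by')\le \operatorname{diam}(N)$ with the diameter finite by compactness of $N$. In both cases $\ell_{g_0^c}(\bc)\ge c\,s\,d_N(\by,\by')$ for a constant $c$ depending only on $g^c$ and $\eta$. Adding this to $\ell_{g_0^c}(\bc)\ge|r-r'|$ and taking the infimum over curves yields the left-hand inequality with a suitable $A^c$.

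The main obstacle is precisely the spherical lower bound: a curve may cheapen its angular motion by diving toward the cone tip, where the factor $r^2$ is small, so the estimate cannot be extracted from the projected length on $N$ alone. The dichotomy quantifies the trade-off, showing that any such descent is itself costly in the radial direction, and this is exactly the point at which the finiteness of $\operatorname{diam}(N)$, hence the compactness of $N$, is used.
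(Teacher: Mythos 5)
Your proof is correct and follows essentially the same route as the paper's: reduce to the simple conic metric $\rd r^2 + r^2 g^\dd(0)$ via uniform equivalence of the family $g^\dd(r)$, then estimate directly in the constant case --- your competitor curve (upper bound) and radial-plus-dichotomy argument (lower bound) are precisely the ``straightforward computation'' the paper leaves implicit. As a minor bonus, your diameter-dependent constant in the lower bound is genuinely needed: the constant $\tfrac12$ displayed in the paper's proof fails whenever $\operatorname{diam}_{g^\dd(0)}(N) > 4$, since two points at equal radius $s$ and angular distance $\theta > 4$ have conic distance at most $2s$ (travel radially to the collapsed boundary and back along it at zero cost), which is smaller than $\tfrac12\, s\,\theta$.
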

\begin{proof}
If $r\to g^\dd(r)$ is constant and equal to a tensor $g_N$ on $N\times \{0\}$, then 
by  
straightforward computation we get
$$
\frac{|r-r'|}{2} + \frac{\min(r,r')}{2}\, d_N(\by,\by') \leq 
 d^c(\bx,\bx') \;\leq\;
|r-r'| + \min(r,r')\,d_N(\by,\by')
$$
for the associated simple conic metric $\rd r^2 + r^2 g_N$. 
The general case is deduced from the constant case above. See also \cite{CoGrMi4}.
\end{proof}
\subsection{Asymptotically conic metrics}\label{section:acm}
Due to considerations similar to the previous Section~\ref{sec:conic} but for 
scattering metrics, see \cite{MeZw,assconic1}, we will give here a simplified 
presentation of asymptotically conic metrics without taking away from  generality, 
compare Proposition-Definition~\ref{propdef:associated} and Remark 
\ref{rk:scattering}.

Let $M$ be a smooth manifold with compact boundary, $N$ a smooth manifold without boundary and $\eta$ a positive number.
\begin{definition}\label{def:asym-con-met}
 An \em asymptotically conic metric   over the cylinder $\Cyl(N,\eta)$ \em  is a smooth Riemannian metric $g^\infty$ on the open cylinder
$\Cyl(N,\eta)^o$ such that the Riemannian metric $r^4 g^\infty$ 
extends as a smooth model conic metric $g^0$ over $\Cyl(N,\eta)$.

The conic metric $g^0$ is the \em conic metric associated with the  
asymptotically conic metric $g^\infty$ \em and over $\Cyl(N,\eta)^o=N_\by \times(0,\infty)_r$  we can write 
$$
g^\infty = \frac{\rd r^2}{r^4} + \frac{g^\dd(r)}{r^2} = \frac{1}{r^4}\, g^0 
$$
where $\Rgo\ni r\to g^\dd(r)$ is a smooth family of Riemannian metrics over $N$.

An \em asymptotically conic  metric (ac-metric) \em on the manifold $(M,\dd M)$ is a Riemannian metric $g^\infty$ on $M\setminus \dd M$ for which there exists  a collar 
neighbourhood diffeomorphism $\vp : U_r^\eta \to \Cyl(\dd M,\eta)$
such that
$
\vp_* g^\infty 
$
is an asymptotically conic metric  over  $\Cyl(\dd M,\eta)$.

The Riemannian metric $r^2g^\infty|_{\dd M} =g^\dd(0)$ on $\dd M$ is 
the \em Riemannian metric on the boundary associated to the ac-metric. \em
\end{definition}
%
%


\begin{remark}\label{rk:scattering}
	An ac-metric is also known as a \em scattering metric \em 
	\cite{MeZw,assconic1}. The standard definition  does not require 
	that it be a conformal factor of a model conic metric, but just of a conic 
	metric. Yet, a change of coordinates over $\Cyl(N,\eta)$  allows this in
	a neighbourhood of the boundary $ N\times 0$, see \cite{assconic1}.
\end{remark}
Let us illustrate the notion with a simple example.
\begin{example}\label{ex:asymp-con}
	 Let $\bbl_\infty : \bS^{n-1}\times\R_{>0} \to \Rn\setminus \bbo$ be the 
	spherical blowing-up of $\Rn$ at infinity, defined by 
	$$
	\bbl_\infty(\bu,R) = \frac{\bu}{R}.
	$$ 
	The pull-back $\bbl_\infty^*(eucl)$ over $\bS^{n-1}\times\R_{>0}$ is the 
	simplest example of an asymptotically conic metric
	$$
	\bbl_\infty^*(eucl) = \frac{1}{r^4}[\rd r^2 + r^2 \rd \bu^2].
	$$
\end{example}
Every ac-metric $g^\infty$ introduces a distance function $d^\infty$ on $M$ such that $d^\infty(\bx,\by)=\infty$ only if $\bx$ or $\by$ lie in $\dd M$ or in different connected components of $M$.
\begin{proposition}\label{prop:asym-conic-dist}
	Let $\Cyl(N,\eta)$ be equipped with an ac-metric $g^\infty$ and the induced distance~$d^\infty$. Let $g^0= \rd r^2 + r^2g^\dd(r) $ be the associated model conic metric and denote $g_N:=g^\dd(0)$.
For points $\bx =(\by,r)$ and $\bx' = (\by,r')$ of $\Cyl(N,\eta)^o$  define
$$
e(\bx,\bx') = \left| \frac{1}{r}-\frac{1}{r'} \right| + 
\min \left( \frac{1}{r}, \frac{1}{r'} \right)\,d_N(\by,\by').
$$
Then there exist positive constants $A^\infty,B^\infty$, depending only on $g^0$ and 
$\eta$, such that
\begin{equation}\label{eq:asym-conic-dist}
A^\infty\, e(\bx,\bx') \,\leq\, d^\infty(\bx,\bx') \,\leq\,
B^\infty e(\bx,\bx')
\end{equation}
for all $\bx,\bx' \in \Cyl(N,\eta)^o$.
\end{proposition}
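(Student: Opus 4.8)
The plan is to reduce the statement to the conic estimate of Proposition~\ref{prop:conic-dist} by means of the inversion $s = 1/r$, which turns an ac-metric into a model conic metric whose cone point sits at infinity; this is exactly the interplay between conic and asymptotically conic behaviour. First I would record that $\Phi : \Cyl(N,\eta)^o \to N \times (1/\eta, \infty)$, $\Phi(\by, r) = (\by, 1/s)$ with $s = 1/r$, is a diffeomorphism under which the ac-metric is pushed to
$$
\Phi_* g^\infty = \rd s^2 + s^2\, \tilde g^\dd(s), \qquad \tilde g^\dd(s) := g^\dd(1/s),
$$
because $\rd r^2/r^4 = \rd s^2$ and $g^\dd(r)/r^2 = s^2 g^\dd(1/s)$. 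Hence $\Phi$ is an isometry between $(\Cyl(N,\eta)^o, d^\infty)$ and $(N \times (1/\eta, \infty), \bar d)$, where $\bar d$ is the conic distance of the model conic metric $\bar g := \Phi_* g^\infty$, and the quantity $e(\bx, \bx')$ is precisely $|s - s'| + \min(s, s')\, d_N(\by, \by')$ with $s = 1/r$, $s' = 1/r'$. Thus it suffices to bound $\bar d$ between fixed multiples of $|s - s'| + \min(s,s')\, d_N(\by,\by')$ for all $s, s' \in (1/\eta, \infty)$.

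Next I would treat the constant-link case $\bar g_0 := \rd s^2 + s^2 g_N$, the genuine metric cone over $(N, g_N)$, and establish $\bar d_0(\bx,\bx') \asymp |s - s'| + \min(s,s')\, d_N(\by,\by')$ for \emph{all} $s, s' > 0$, not merely near a cone point. The point is homothety-invariance: the dilation $(s,\by)\mapsto(\lambda s,\by)$ multiplies $\bar g_0$ by $\lambda^2$, hence scales $\bar d_0$ by $\lambda$, and it scales the right-hand quantity by $\lambda$ as well, so their ratio is scale-invariant. It is therefore enough to bound the ratio on a single compact radial slice, where compactness of $N$ together with the competing radial and angular paths gives the two-sided bound. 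This is exactly the constant-case computation underlying Proposition~\ref{prop:conic-dist}, now read globally in the radial variable, so I would either quote it or redo the short competing-paths argument.

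Finally I would pass from $\bar g_0$ to $\bar g$ by equivalence of metrics. The family $\tilde g^\dd(s) = g^\dd(1/s)$ converges to $g^\dd(0) = g_N$ as $s \to \infty$ and is a smooth positive-definite family over the compact manifold $N$ for $s$ in the remaining bounded range; hence there are constants $c, C > 0$ with $c\, g_N \le \tilde g^\dd(s) \le C\, g_N$ uniformly for $s \in (1/\eta, \infty)$. Then $\bar g$ and $\bar g_0$ are equivalent singular metrics and their conic distances are bi-Lipschitz equivalent with these constants; composing the two comparisons yields $A^\infty e \le d^\infty \le B^\infty e$, with $A^\infty, B^\infty$ depending only on $g^0$ (through $g_N$ and the bounds $c, C$) and on $\eta$. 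The main obstacle is precisely the uniformity over the non-compact range $r \in (0,\eta)$, i.e. $s \to \infty$: it is the homothety-invariance of the model cone in the constant case, together with the convergence $g^\dd(r) \to g_N$ as $r \to 0$, that prevents the comparison constants from degenerating as one approaches the conic end at infinity.
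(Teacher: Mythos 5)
Your proof is correct and is essentially the argument the paper intends: the paper's own proof is just ``as in Proposition~\ref{prop:conic-dist}'' (constant case by direct computation, general case by equivalence of metrics), and your inversion $s=1/r$ is precisely the device the paper formalizes in Section~\ref{section:conic-inv}, after which your constant-case-plus-comparison scheme, with the homothety argument supplying uniformity over the unbounded radial range, mirrors the conic proof. The one caveat --- your claimed uniform equivalence $c\,g_N\le \tilde g^\dd(s)\le C\,g_N$ over the non-compact range $s\in(1/\eta,\infty)$ is not automatic, since the family $g^\dd(r)$ is only smooth on $[0,\eta)$ and could degenerate as $r\to\eta$ --- is an imprecision shared by the paper's own statement of the estimate, and it disappears after shrinking $\eta$ (equivalently, restricting to a compact subfamily $r\in[0,\eta']$, $\eta'<\eta$).
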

\begin{proof}
Proof as in Proposition~\ref{prop:conic-dist}, see \cite{CoGrMi4} and \cite{GrOl} as well.
\end{proof}
\subsection{Manifolds with globally conic metrics}

Let $(M,\dd M)$ be a compact smooth manifold.

\begin{definition}\label{def:glob-conic-manif}
A \em globally conic metric on $M$ \em is a conic metric over $M\setminus 
\dd M_\infty$, where $\dd M_\infty$ a finite union of connected components of 
$\dd M$, which is an asymptotically conic metric on some neighbourhood of $\dd 
M_\infty$ in $M$. 

Then the triple $(M,\dd M,g)$ is a \em  manifold with a globally conic metric \em  and the subset $\dd M_\infty$ is the \em boundary at infinity \em of $(M,\dd M, g)$.
\end{definition}
From the point of view of Lipschitz geometry, the following Proposition~\ref{propdiffeobiLip} shows interchangeability of globally conic metrics, similarly to Riemannian structure, compare Section \ref{section:pseudo-metric}.
\begin{proposition}\label{propdiffeobiLip}
Let $(M,\dd M, g)$ and $(N, \dd N, h)$ be two smooth manifolds with globally conic metrics and $\phi:M\to N$ be a diffeomorphism such that  $\phi(\dd M_\infty) = \dd N_\infty$. Then the conic metrics $\phi_*(g)$ and $h$ are equivalent over
$N\setminus \dd N_\infty$. In particular any two globally conic metrics on a compact manifold yielding the same boundary at infinity are equivalent. 
\end{proposition}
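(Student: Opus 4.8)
The plan is to reduce the statement to the ``in particular'' case of two metrics on a single manifold, and then to prove equivalence piece by piece on a covering of $N\setminus\dd N_\infty$ by a conic collar, an asymptotically conic collar, and a compact core. First I would check that the globally conic structure is invariant under $\phi$. Since $\phi$ is a diffeomorphism of manifolds with boundary sending $\dd M_\infty$ onto $\dd N_\infty$, it also sends $\dd M_0:=\dd M\setminus\dd M_\infty$ onto $\dd N_0:=\dd N\setminus\dd N_\infty$. Pushing a model conic metric forward by $\phi$ (composed with the collar identifications induced by $\phi|_{\dd M_0}$) is again model conic, and likewise the pushforward of an asymptotically conic metric is asymptotically conic, because the defining properties in Definition~\ref{def:conic-manif} and Definition~\ref{def:asym-con-met} are phrased through a collar diffeomorphism and survive composition with $\phi$. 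Hence $\phi_*g$ is a globally conic metric on $N$ with the same boundary at infinity $\dd N_\infty$ as $h$, and it suffices to show: any two globally conic metrics $g_1,g_2$ on the compact manifold $(N,\dd N)$ with common boundary at infinity $\dd N_\infty$ are equivalent over $N\setminus\dd N_\infty$.

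Next I would fix a collar neighbourhood $U_0$ of $\dd N_0$ and a collar neighbourhood $U_\infty$ of $\dd N_\infty$, and set $K:=N\setminus(U_0\cup U_\infty)$, a compact set disjoint from $\dd N$. On $K$ both $g_1$ and $g_2$ are genuine smooth Riemannian metrics, since a conic metric is smooth away from $\dd N_0$ and an ac-metric is smooth away from $\dd N_\infty$; minimising and maximising the continuous positive function $\xi\mapsto g_2(\xi,\xi)$ over the compact $g_1$-unit sphere bundle of $K$ then produces equivalence constants there. On $U_0$ both metrics are conic, so the collar form of the equivalence of conic metrics recorded just before Corollary~\ref{cor:conicequivalent} yields equivalence on $U_0$ after possibly shrinking it.

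The crux is the asymptotically conic collar $U_\infty$, for which no ready-made equivalence is available; I would treat it by reducing to the conic case through the conformal relation of Definition~\ref{def:asym-con-met}. On $U_\infty\setminus\dd N_\infty$ write $g_i=r_i^{-4}g_i^0$, where $r_i$ is the boundary defining function coming from the collar presenting $g_i$ and $g_i^0=r_i^4g_i$ is the associated model conic metric extending smoothly across $\dd N_\infty$. Each $g_i^0$ is a conic metric near $\dd N_\infty$, so by the conic case again $g_1^0$ and $g_2^0$ are equivalent on a common smaller collar. Since any two boundary defining functions of the compact boundary $\dd N_\infty$ have mutually bounded ratio near it (each vanishes to first order with non-vanishing differential), the positive factor $(r_1/r_2)^4$ is comparable to $1$; combining the two comparisons gives $g_2=r_2^{-4}g_2^0\asymp r_2^{-4}g_1^0=(r_1/r_2)^4\,r_1^{-4}g_1^0\asymp g_1$ on $U_\infty\setminus\dd N_\infty$.

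Finally I would take the least favourable among the finitely many pairs of equivalence constants obtained on $U_0$, on $K$, and on $U_\infty\setminus\dd N_\infty$, whose union is $N\setminus\dd N_\infty$, to conclude global equivalence; translating back through $\phi$ (a diffeomorphism, hence preserving metric equivalence) yields the equivalence of $\phi_*g$ and $h$. I expect the main obstacle to be exactly this asymptotically conic step: one must verify that the conformal factors produced by two a priori different collar presentations are comparable, which rests on the mutual comparability of boundary defining functions near a compact boundary, and then propagate the two-sided quadratic-form bounds cleanly through multiplication by these factors.
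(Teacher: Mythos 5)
Your proof is correct and follows essentially the same route as the paper: equivalence of conic metrics near the finite boundary (Corollary~\ref{cor:conicequivalent}), reduction of the asymptotically conic collar to the conic case via the conformal factor $r^{-4}$, and compactness on the remaining core. The only real difference is that you spell out the comparability of the two boundary defining functions entering the conformal factors, a point the paper's terse proof glosses over with ``by definition''.
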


\begin{proof}
By Corollary \ref{cor:conicequivalent} any two conic metrics over $\Cyl(N,\eta)$ are equivalent over $\Cyl(N,\ve)$ for some 
positive $\ve\leq \eta$. Thus by definition any two ac-metrics over $\Cyl(N,\eta)$ are equivalent
over $\Cyl(N,\ve)^o$ for some positive $\ve\leq  \eta$.	Since the singular locus and boundary at infinity of the globally conic metrics $\phi_*(g)$ and $h$ are thus equivalent.
\end{proof}

\subsection{Conic inversion}\label{section:conic-inv}

Let $(N,g_N)$ be a connected compact Riemannian manifold without 
boundary. Denote by $$P = N_\by\times(\Rgo)_r$$  the infinite (non-negative) 
cylinder over $N$. 
Let $P^0 = N\times(0,\infty)$ be the open infinite cylinder over 
$N$. 

The \em conic inversion \em of an open cylinder $P^0$ is the mapping 
$$
\iota^0 : P^0 \to P^0, \;\; (\by,r) \mapsto (\by,R) =\left(\by,
\frac{1}{r}\right).
$$ 
Gluing two disjoint 
copies of $P$ along their interiors $P^0$ identified by the 
inversion $\iota^0$ yields the smooth compact manifold with 
boundary
$$
\ovP = N\times [0,\infty].
$$
The inversion 
$\iota^o$ extends onto $\ovP$ as a diffeomorphism $\iota$, inverting 
$\dd (N\times\Rgo)$ and $\dd (N \times (0,\infty])$. 

Consider the following singular Riemannian metrics 
$$
g^0 = \rd r^2 + r^2 g^\dd(r) \;\; {\rm over} \;\; N_\by\times(\Rgo)_r \;\; {\rm 
	and} \;\; g^\infty = \rd R^2 + R^2 g^\dd(R) \;\; {\rm over} \;\; 
N_\by\times(0,\infty]_R;
$$
Both are model conic metrics over their respective domains. 
On $P^0$ we get
$$
r^{-4}\,g^0 = (\iota^0)^* g^\infty,
$$
thus $r^{-4}\,g^0$ is an  asymptotically conic metric  over 
$P^0$. 

Let $d^0$ be the conic distance  over $P^0$ obtained from $g^0$, and let 
$d^\infty$ be the  distance over $P^0$ obtained from
$ (\iota^0)^* g^\infty$. A simple, though essential,
metric property relating conic  with asymptotically conic distances is summarized as follows.
\begin{proposition}\label{prop:inv-dist}
There exist positive constants $A',B'$ such that for all $\bx,\bx' \in P^0$ the
following holds 
\begin{equation}
A' \cdot d^0(\bx,\bx')\;\leq\; r\cdot r'\cdot d^\infty(\bx,\bx') \;\leq \; 
B'\cdot d^0(\bx,\bx').
\end{equation}
\end{proposition}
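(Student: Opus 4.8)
The plan is to read off the two-sided bound from the metric estimates already available for conic and asymptotically conic distances, Proposition~\ref{prop:conic-dist} and Proposition~\ref{prop:asym-conic-dist}, and to glue them with one elementary algebraic identity. The crucial preliminary observation is that the ac-metric $(\iota^0)^*g^\infty = r^{-4}g^0$ has $g^0$ for its associated model conic metric, with boundary metric $g^\dd(0)=g_N$; indeed $r^4\bigl(r^{-4}g^0\bigr)=g^0$. Hence Proposition~\ref{prop:asym-conic-dist} applies to $d^\infty$ and produces precisely the quantity $e(\bx,\bx')$, while Proposition~\ref{prop:conic-dist} applied to $g^0$ controls $d^0$ by $|r-r'|+\min(r,r')\,d_N(\by,\by')$, both with the same $d_N$ coming from $g_N$. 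Writing $\bx=(\by,r)$ and $\bx'=(\by',r')$, it then remains only to compare the two model quantities after multiplication by $r\,r'$.

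That comparison is an exact identity. Since $r\,r'\,\bigl|\tfrac1r-\tfrac1{r'}\bigr|=|r-r'|$ and $r\,r'\,\min\bigl(\tfrac1r,\tfrac1{r'}\bigr)=\tfrac{r\,r'}{\max(r,r')}=\min(r,r')$, one obtains
\[
r\,r'\,e(\bx,\bx')\;=\;|r-r'|+\min(r,r')\,d_N(\by,\by').
\]
Chaining this with the two bounds yields the assertion with $A'=A^\infty/B^c$ and $B'=B^\infty/A^c$, namely
\[
A'\,d^0(\bx,\bx')\;\leq\;r\,r'\,d^\infty(\bx,\bx')\;\leq\;B'\,d^0(\bx,\bx').
\]
Since $N$ is connected and compact, $P^0$ is connected and all the distances involved are finite, so no special treatment of the value $\infty$ is needed.

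The one genuine difficulty is that Propositions~\ref{prop:conic-dist} and~\ref{prop:asym-conic-dist} are stated over a \emph{finite} cylinder $\Cyl(N,\eta)$, whereas the inequality here must hold on the whole infinite cylinder $P^0=N\times(0,\infty)$, so the constants have to be uniform across all radial scales. This is exactly where the inversion pays off: it compactifies the radial half-line to $[0,\infty]$, and the cross-sectional family $r\mapsto g^\dd(r)$, positive-definite on $[0,\infty)$ and controlled at the far end by the model conic structure of $g^\infty$, extends to a positive-definite metric as $r\to\infty$; by compactness of $[0,\infty]$ and of $N$ there are then constants $c,C>0$ with $c\,g_N\leq g^\dd(r)\leq C\,g_N$ for every $r$ (any two Riemannian metrics on the compact $N$ being uniformly comparable). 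Consequently $g^0$ is squeezed on all of $P^0$ between the two \emph{simple} conic metrics $\rd r^2+r^2(c\,g_N)$ and $\rd r^2+r^2(C\,g_N)$, for which the bound of Proposition~\ref{prop:conic-dist} is the explicit computation carried out in its proof, valid for all $r,r'>0$. This upgrades the conic estimate to the whole of $P^0$ with scale-independent constants, and the asymptotic estimate follows on all of $P^0$ by the same argument applied to $g^\infty$ and transported back through the isometry $\iota^0\colon(P^0,(\iota^0)^*g^\infty)\to(P^0,g^\infty)$. Securing this uniformity is the main work; everything else is the algebraic identity and the chaining of inequalities.
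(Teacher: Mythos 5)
Your proof is correct and follows essentially the same route as the paper: both reduce $d^0$ and $d^\infty$ to the model quantities via Propositions~\ref{prop:conic-dist} and~\ref{prop:asym-conic-dist} and then conclude by elementary algebra, with the same constants $A'=A^\infty/B^c$, $B'=B^\infty/A^c$ implicit in the paper's argument. Yours is in fact slightly cleaner: the exact identity $r\,r'\,e(\bx,\bx')=|r-r'|+\min(r,r')\,d_N(\by,\by')$ makes the paper's case distinction ($r\geq 2r'$ versus $r'\leq r\leq 2r'$) unnecessary, and your compactness argument for uniformity of the constants over the whole infinite cylinder makes explicit a point the paper hides behind ``we can assume''.
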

\begin{proof}
From Estimates \eqref{eq:conic-dist} and 
\eqref{eq:asym-conic-dist} we can assume that
$$
d^0(\bx,\bx') = |r-r'| + \min(r,r')d_N(\by,\by'), \;\; {\rm and} 
\;\; 
d^\infty(\bx,\bx') = |R-R'| + \min(R,R')d_N(\by,\by'),
$$
with $R=r^{-1}$ and $R'=(r')^{-1}$. We immediately obtain
$$
r\cdot r'\cdot d^\infty \geq d^0.
$$Distinguishing the cases $r\geq 2r'$ and $r'\leq r\leq 2r'$, 
we get the other inequality. 
\end{proof}
%
%
%
%
%
%
%
%
%
%
%
%
%
%
%
%
%
%
%
%
%
%
%
%
%
%
%
%
%
%
%
%
%
%
%
%
%
%
\section{Globally conic singular manifolds}\label{section:GCSM}
The interest in  metrically conical singularities in Riemannian 
geometry dates back at least to the seminal works 
\cite{Che1,ChTa1,ChTa2,Che2}. Definition presented in this section is a 
variation on the standard notion of singular manifold with 
metrically conical points. For more on the local differential 
geometry of  conic singular manifolds see \cite{Mel,Gri}.

We will give natural characterizations of (globally) conic 
singular manifolds: by an atlas of isometries in Definition~\ref{def:conic-sing-manif}, isometric resolution in Section~\ref{section:CRofCSM}, as a quotient space in Section~\ref{sec:quotientspace} and by a system of conic charts in Section~\ref{sec:coniccharts}. 
\subsection{Conic singular manifolds}
Recall from 
Section~\ref{section:pseudo-metric} that if  $M$ is a manifold with a
conic metric or an ac-metric, then the metric induces a length distance  $d_M$
on $M\setminus \dd M$. 
\begin{definition}[Conic singular manifold]\label{def:conic-sing-manif}
A complete length space $(E,d)$ is an $n$-dimensional \em conic 
singular manifold \em if there exists a finite subset 
$\Sgm_E$ of $E$, an open covering $\cU$ of
$E$ and smooth $n$-dimensional manifolds $M_U$ with compact 
boundary such that for any $U\in\cU$ one of the following 
is satisfied

(1) there exists an isometry  
$$
\phi_U:(M_U, d_{M_U})\to (U,d),
$$
where $(M_U, g_U)$ is a Riemannian manifold with empty 
boundary

(2)
there exists a point $\ba\in U\cap \Sgm_E$ and a continuous 
surjective mapping $\phi_U:M_U\to U$    such that the restriction
$$
(\phi_U)_{|M_U\setminus \partial M_U}:(M_U\setminus \partial 
M_U, d_{M_U})\to (U\setminus\ba,d)
$$
is an isometry, where $(M_U,\dd M_U, g_U)$ is a manifold with a conic metric

The smallest among subsets $\Sgm_E$ satisfying the definition is called 
the \em set  $E_\sing$  of  singular points
of $E$. \em Its complement $E^o$ is the set of \em smooth
points of $E$. \em 

Moreover, a conic singular manifold is a \em globally conic
singular manifold \em if additionally

(3) there exist  a compact subset $K$ of $E$ such that
$E\setminus K = U\in\cU$, and an isometry 
$$
\phi_U:(M_U\setminus \partial M_U, d_{M_U})\to (U,d),
$$
where $(M_U,\dd M_U,g_U)$ is a manifold with  an asymptotically conic metric.
\end{definition}
\tikzset{every picture/.style={line width=0.75pt}} 
\begin{figure}[h]
\begin{tikzpicture}[x=0.75pt,y=0.75pt,yscale=-0.7,xscale=0.7]
	\centering

\draw    (87.95,13.19) .. controls (142.99,0.43) and (159.38,85.91) .. (205.05,27.22) ;
\draw    (102,270.91) .. controls (92.63,139.5) and (-23.31,47.64) .. (87.95,13.19) ;
\draw    (165.24,268.36) .. controls (155.87,83.36) and (231.99,131.84) .. (205.05,27.22) ;
\draw  [dash pattern={on 4.5pt off 4.5pt}] (102,270.91) .. controls (102,262.46) and (116.16,255.6) .. (133.62,255.6) .. controls (151.08,255.6) and (165.24,262.46) .. (165.24,270.91) .. controls (165.24,279.37) and (151.08,286.22) .. (133.62,286.22) .. controls (116.16,286.22) and (102,279.37) .. (102,270.91) -- cycle ;
\draw   (86.48,92.29) .. controls (79.37,48.61) and (80.02,13.19) .. (87.95,13.19) .. controls (95.87,13.19) and (108.06,48.61) .. (115.17,92.29) .. controls (122.29,135.98) and (121.63,171.4) .. (113.71,171.4) .. controls (105.79,171.4) and (93.6,135.98) .. (86.48,92.29) -- cycle ;
\draw    (483.77,23.4) .. controls (497.82,45.09) and (561.06,93.57) .. (580.97,28.5) ;
\draw    (467.37,272.19) .. controls (421.7,152.26) and (346.75,159.91) .. (371.35,27.22) ;
\draw    (530.61,269.64) .. controls (521.24,84.64) and (643.04,133.12) .. (616.1,28.5) ;
\draw   [color={rgb, 255:red, 144; green, 19; blue, 254 }  ,draw opacity=1 ][line width=1]  (467.37,272.19) .. controls (467.37,263.73) and (481.53,256.88) .. (498.99,256.88) .. controls (516.46,256.88) and (530.61,263.73) .. (530.61,272.19) .. controls (530.61,280.65) and (516.46,287.5) .. (498.99,287.5) .. controls (481.53,287.5) and (467.37,280.65) .. (467.37,272.19) -- cycle ;
\draw  [color={rgb, 255:red, 144; green, 19; blue, 254 }  ,draw opacity=1 ][line width=1]  (371.35,25.32) .. controls (371.35,17.21) and (381.57,10.64) .. (394.18,10.64) .. controls (406.79,10.64) and (417.02,17.21) .. (417.02,25.32) .. controls (417.02,33.43) and (406.79,40) .. (394.18,40) .. controls (381.57,40) and (371.35,33.43) .. (371.35,25.32) -- cycle ;
\draw  [color={rgb, 255:red, 144; green, 19; blue, 254 }  ,draw opacity=1 ][line width=1]  (438.1,23.4) .. controls (438.1,16) and (448.32,10) .. (460.93,10) .. controls (473.55,10) and (483.77,16) .. (483.77,23.4) .. controls (483.77,30.8) and (473.55,36.79) .. (460.93,36.79) .. controls (448.32,36.79) and (438.1,30.8) .. (438.1,23.4) -- cycle ;
\draw  [color={rgb, 255:red, 144; green, 19; blue, 254 }  ,draw opacity=1 ][line width=1]  (580.97,28.5) .. controls (580.97,22.16) and (588.83,17.02) .. (598.54,17.02) .. controls (608.24,17.02) and (616.1,22.16) .. (616.1,28.5) .. controls (616.1,34.84) and (608.24,39.98) .. (598.54,39.98) .. controls (588.83,39.98) and (580.97,34.84) .. (580.97,28.5) -- cycle ;
\draw    (417.02,27.22) .. controls (460.35,324.5) and (458.01,117.81) .. (438.1,23.4) ;
\draw    (354.95,158.64) -- (241.02,158.64) ;
\draw [shift={(239.02,158.64)}, rotate = 360] [color={rgb, 255:red, 0; green, 0; blue, 0 }  ][line width=0.75]    (10.93,-3.29) .. controls (6.95,-1.4) and (3.31,-0.3) .. (0,0) .. controls (3.31,0.3) and (6.95,1.4) .. (10.93,3.29)   ;

\end{tikzpicture}
\caption{A globally conic singular manifold and its smooth model.}

\end{figure}
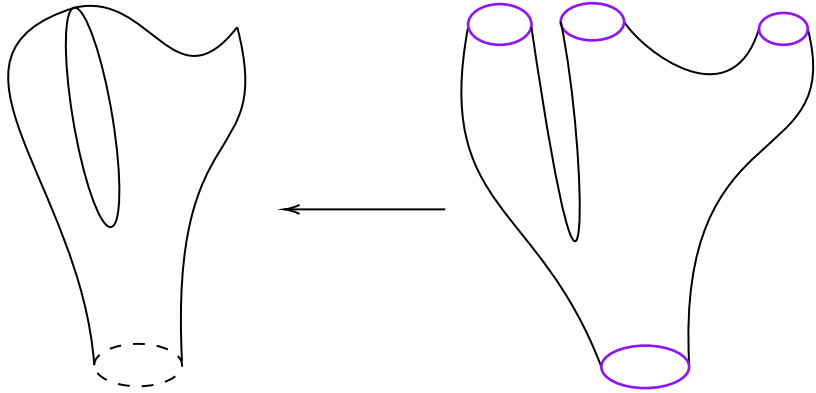

 Note that the transition mappings $\phi_U^{-1}\circ\phi_V$, appropriately restricted,  are
smooth Riemannian isometries  
(see \cite{MySt,Pal}), in particular smooth diffeomorphisms. Thus we get the following  Proposition~\ref{prop:rksonconicsing}.

\begin{proposition}\label{prop:rksonconicsing}
	If $(E,d)$ is a (globally) conic singular manifold, then 
	\\
	(1) The set of smooth points $E^o$ is a smooth manifold.
	\\
	(2) The Riemannian metrics $(\phi_U)_* (g_{U|M_U\setminus\dd M_U})$ glue to a Riemannian metric $g^o$ over $E^o$.
	\\
	(3) The metric spaces $(E^o,d^o)$ and $(E^o,d_{|E^o})$ are locally isometric, where $d^o$ is  the distance function  induced by the Riemann tensor $g^o$ on $E^o$.
\end{proposition}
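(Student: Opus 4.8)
The plan is to use the local models $\phi_U$ as charts and to transport the smooth and the metric data from the manifolds $M_U$ to $E^o$, the decisive input being the fact recorded just before the statement: the transition maps $\phi_U^{-1}\circ\phi_V$, suitably restricted, are smooth Riemannian isometries, hence in particular smooth diffeomorphisms. For \emph{(1)}, I would first pass to the smooth parts of the charts. A chart of type (1) is already an isometry of the boundaryless Riemannian manifold $M_U$ onto $U$, and since such $U$ meets no singular point it lies in $E^o$; a chart of type (2) restricts to an isometry $\phi_U:M_U\setminus\dd M_U\to U\setminus\ba$ identifying $U\cap E^o$ with the interior of $M_U$. Composing the inverses $\phi_U^{-1}$ with smooth coordinate charts of the $M_U$ produces an atlas of $E^o$ whose transition maps are the restrictions of $\phi_U^{-1}\circ\phi_V$, smooth diffeomorphisms by the cited fact. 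As $E$ is metric it is Hausdorff, so these locally Euclidean charts with smooth transitions endow $E^o$ with a smooth structure, which is (1).

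For \emph{(2)}, I would set $g^o:=(\phi_U)_*\big(g_U|_{M_U\setminus\dd M_U}\big)$ on $U\cap E^o$; this is a genuine smooth Riemannian metric there because $\dd M_U=\sing(g_U)$ has been removed and each $\phi_U$ is a diffeomorphism for the structure of (1). To glue, note that on $U\cap V\cap E^o$ the transition $\psi:=\phi_U^{-1}\circ\phi_V$ is a Riemannian isometry, i.e. $\psi_*g_V=g_U$, so by functoriality of the pushforward
\[
(\phi_U)_*g_U=(\phi_U)_*\psi_*g_V=(\phi_U\circ\psi)_*g_V=(\phi_V)_*g_V,
\]
and the local pieces agree on overlaps, gluing to a global Riemannian metric $g^o$ on $E^o$; this is (2).

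For \emph{(3)}, fix a smooth point $p$ and a chart $U\ni p$. On one hand $d_{|E^o}\le d^o$, since every path contained in $E^o$ is admissible in $E$. On the other hand, by Definition~\ref{def:conic-sing-manif} the map $\phi_U$ is an isometry for the ambient $d$, so $d$ on $U\cap E^o$ is the pushforward under $\phi_U$ of the $g_U$-length distance, which by the construction of $g^o$ is the $g^o$-length distance computed with paths inside $U\cap E^o$; as this dominates $d^o$, the two inequalities combine to $d_{|E^o}=d^o$ near $p$, the desired local isometry.

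The step I expect to be the main obstacle is exactly this reconciliation of the intrinsic distance $d^o$ with the restriction $d_{|E^o}$: a priori the ambient distance could be realized by a path leaving $E^o$ through the finite singular set $E_\sing$ or through the boundary at infinity, so one must verify that between nearby smooth points no such shortcut is shorter. This is where the finiteness of $E_\sing$, the conic comparison of Proposition~\ref{prop:conic-dist}, and the completeness of $(E,d)$ are brought to bear, confining distance-minimizers to a single chart on a small enough neighbourhood.
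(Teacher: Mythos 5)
Your proposal is correct and takes essentially the same route as the paper: the paper offers no separate proof of this proposition, deducing all three statements directly from the preceding remark that the transition maps $\phi_U^{-1}\circ\phi_V$, appropriately restricted, are smooth Riemannian isometries (Myers--Steenrod), which is precisely the input you use for (1) and (2). The subtlety you flag in (3) --- that the ambient distance between nearby smooth points might a priori be realized through the finite set $E_\sing$ --- is genuine but is settled exactly as you sketch (on a ball whose radius is small compared with the distance to $E_\sing$, any path through a singular point is too long, so near-minimizing paths stay in one chart where the chart isometry identifies $d$ with the $g^o$-length distance), a point the paper passes over in silence.
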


\subsection{Conic singular manifolds as quotient spaces} \label{sec:quotientspace}
Let $M$ be a manifold with conic metric $g$ and denote $M^o:= M\setminus \dd M$. Recall that the conic tensor induces a pseudo-distance $d^c$ on $M$ which is infinite between points in different connected components of $M$ and is equal zero between points in the same boundary component.

Let $\Sigma$ be a finite set and $\sigma: M\to M^o\cup \Sigma$ a surjective map 
such that $\sigma_{|M^o}=\textup{id}_{M^o}$ and for each boundary component $N$ of 
$\dd M$ we have $\sigma(N)\in \Sigma$. Let $M^\sigma$ denote the quotient space 
obtained from $M$ by the mapping $\sigma$. With a slight abuse of notation we
will say that the mapping $\sigma: M\to M^\sigma$ is a \em boundary collapsing 
map.\em 

Let us introduce distance on $M^\sigma$ as 
$$
d^\sigma(\bx, \by) := \min \left(  d^c(\bx, \by), d^c(\bx, \sigma^{-1}(p_0)) + \sum_{j=1}^k d^c( \sigma^{-1}(p_{j-1}), \sigma^{-1}(p_j) )  + d^c(\sigma^{-1}(p_k),\by)  \right),
$$
where minimum is taken with respect to all chains of points $p_0,\dots, p_k$ in $\Sigma$. 
\begin{figure}[h]
	\tikzset{every picture/.style={line width=0.75pt}} 
	
	\begin{tikzpicture}[x=0.75pt,y=0.75pt,yscale=-0.7,xscale=0.7]
	\centering 	
\draw    (88.95,88.19) .. controls (143.99,75.43) and (160.38,160.91) .. (206.05,102.22) ;
\draw    (103,345.91) .. controls (93.63,214.5) and (-22.31,122.64) .. (88.95,88.19) ;
\draw    (166.24,343.36) .. controls (156.87,158.36) and (232.99,206.84) .. (206.05,102.22) ;
\draw  [dash pattern={on 4.5pt off 4.5pt}] (103,345.91) .. controls (103,337.46) and (117.16,330.6) .. (134.62,330.6) .. controls (152.08,330.6) and (166.24,337.46) .. (166.24,345.91) .. controls (166.24,354.37) and (152.08,361.22) .. (134.62,361.22) .. controls (117.16,361.22) and (103,354.37) .. (103,345.91) -- cycle ;
\draw    (484.77,98.4) .. controls (498.82,120.09) and (562.06,168.57) .. (581.97,103.5) ;
\draw    (468.37,347.19) .. controls (422.7,227.26) and (347.75,234.91) .. (372.35,102.22) ;
\draw    (531.61,344.64) .. controls (522.24,159.64) and (644.04,208.12) .. (617.1,103.5) ;
\draw  [dash pattern={on 4.5pt off 4.5pt}] (468.37,347.19) .. controls (468.37,338.73) and (482.53,331.88) .. (499.99,331.88) .. controls (517.46,331.88) and (531.61,338.73) .. (531.61,347.19) .. controls (531.61,355.65) and (517.46,362.5) .. (499.99,362.5) .. controls (482.53,362.5) and (468.37,355.65) .. (468.37,347.19) -- cycle ;
\draw  [color={rgb, 255:red, 144; green, 19; blue, 254 }  ,draw opacity=1 ][line width=1]  (372.35,100.32) .. controls (372.35,92.21) and (382.57,85.64) .. (395.18,85.64) .. controls (407.79,85.64) and (418.02,92.21) .. (418.02,100.32) .. controls (418.02,108.43) and (407.79,115) .. (395.18,115) .. controls (382.57,115) and (372.35,108.43) .. (372.35,100.32) -- cycle ;
\draw  [color={rgb, 255:red, 74; green, 144; blue, 226 }  ,draw opacity=1 ][line width=1]  (439.1,98.4) .. controls (439.1,91) and (449.32,85) .. (461.93,85) .. controls (474.55,85) and (484.77,91) .. (484.77,98.4) .. controls (484.77,105.8) and (474.55,111.79) .. (461.93,111.79) .. controls (449.32,111.79) and (439.1,105.8) .. (439.1,98.4) -- cycle ;
\draw  [color={rgb, 255:red, 155; green, 155; blue, 155 }  ,draw opacity=1 ][line width=1]  (581.97,103.5) .. controls (581.97,97.16) and (589.83,92.02) .. (599.54,92.02) .. controls (609.24,92.02) and (617.1,97.16) .. (617.1,103.5) .. controls (617.1,109.84) and (609.24,114.98) .. (599.54,114.98) .. controls (589.83,114.98) and (581.97,109.84) .. (581.97,103.5) -- cycle ;
\draw    (418.02,102.22) .. controls (461.35,399.5) and (459.01,192.81) .. (439.1,98.4) ;
\draw    (355.95,233.64) -- (242.02,233.64) ;
\draw [shift={(240.02,233.64)}, rotate = 360] [color={rgb, 255:red, 0; green, 0; blue, 0 }  ][line width=0.75]    (10.93,-3.29) .. controls (6.95,-1.4) and (3.31,-0.3) .. (0,0) .. controls (3.31,0.3) and (6.95,1.4) .. (10.93,3.29)   ;
\draw    (58,103) .. controls (95,297) and (169,311) .. (106,88) ;
\draw    (58,103) .. controls (90,139) and (105,104) .. (106,88) ;
\draw  [color={rgb, 255:red, 144; green, 19; blue, 254 }  ,draw opacity=1 ][line width=1]  (372.35,65.32) .. controls (372.35,57.21) and (382.57,50.64) .. (395.18,50.64) .. controls (407.79,50.64) and (418.02,57.21) .. (418.02,65.32) .. controls (418.02,73.43) and (407.79,80) .. (395.18,80) .. controls (382.57,80) and (372.35,73.43) .. (372.35,65.32) -- cycle ;
\draw  [color={rgb, 255:red, 74; green, 144; blue, 226 }  ,draw opacity=1 ][line width=1]  (435.1,63.4) .. controls (435.1,56) and (445.32,50) .. (457.93,50) .. controls (470.55,50) and (480.77,56) .. (480.77,63.4) .. controls (480.77,70.8) and (470.55,76.79) .. (457.93,76.79) .. controls (445.32,76.79) and (435.1,70.8) .. (435.1,63.4) -- cycle ;
\draw    (372.35,65.32) .. controls (374,20) and (481,24) .. (480.77,63.4) ;
\draw    (418.02,65.32) .. controls (423,55) and (429.1,53.4) .. (435.1,63.4) ;
\draw [line width=1.5]  [dash pattern={on 1.69pt off 2.76pt}]  (58,103) .. controls (94,102) and (92,92) .. (106,88) ;
\draw [line width=1.5]  [dash pattern={on 1.69pt off 2.76pt}]  (58,103) .. controls (51,116) and (51,129) .. (55,138) ;
\draw [line width=1.5]  [dash pattern={on 1.69pt off 2.76pt}]  (106,88) .. controls (118,95) and (126,102) .. (132,114) ;
\draw [line width=1.5]  [dash pattern={on 1.69pt off 2.76pt}]  (395.18,115) .. controls (394,126) and (391,140) .. (392.18,150) ;
\draw [line width=1.5]  [dash pattern={on 1.69pt off 2.76pt}]  (395.18,50.64) .. controls (414,39) and (452,43) .. (457.93,50) ;
\draw [line width=1.5]  [dash pattern={on 1.69pt off 2.76pt}]  (461.93,111.79) .. controls (468,126) and (466,125) .. (472,137) ;

\draw (43,134) node [anchor=north west][inner sep=0.75pt]   [align=left] {x};
\draw (377,138) node [anchor=north west][inner sep=0.75pt]   [align=left] {x};
\draw (122,114) node [anchor=north west][inner sep=0.75pt]   [align=left] {y};
\draw (476,131) node [anchor=north west][inner sep=0.75pt]   [align=left] {y};

\end{tikzpicture}
\caption{Distance between points in the quotient space.}
\end{figure}
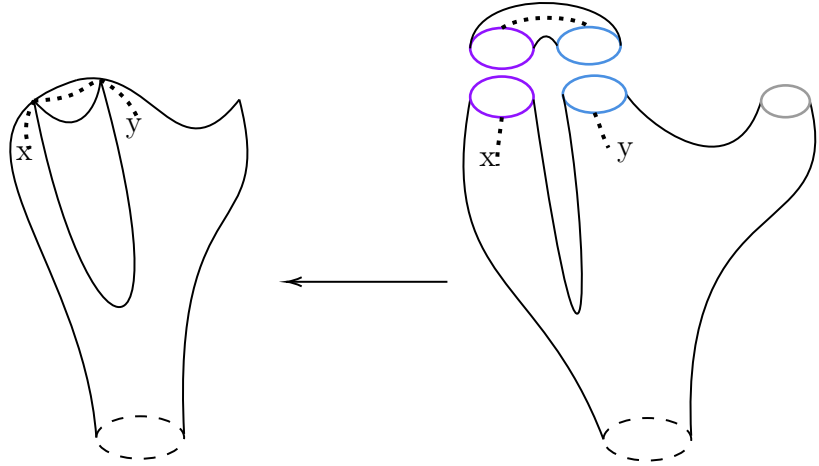
\begin{proposition}[Conic singular manifold as quotient space]\label{propQuotient}
Let $\sigma:M\to M^\sigma$ be a boundary collapsing map on a manifold $M$ with a conic metric. Then $(M^\sigma, d^\sigma)$ is a conic singular manifold. If 
additionally, the manifold $M$ is densely embedded in a compact manifold  
with a globally conic metric, then $(M^\sigma, d^\sigma)$ is a globally conic 
singular manifold. 
\end{proposition}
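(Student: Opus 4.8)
The plan is to check each clause of Definition~\ref{def:conic-sing-manif} for $(M^\sigma, d^\sigma)$, dealing first with the metric-space axioms and then with the local models. I would begin by verifying that $d^\sigma$ is a genuine distance: symmetry is immediate, and the triangle inequality follows by concatenating admissible chains and invoking that of $d^c$. The one substantive point is non-degeneracy, for which I would use that $\partial M$ is compact with finitely many components, so that distinct boundary components are separated by a uniform positive conic distance; this gives $d^c(\sigma^{-1}(p),\sigma^{-1}(p'))>0$ for $p\neq p'$ in $\Sigma$ and $d^c(\mathbf{x},\sigma^{-1}(p))>0$ for every $\mathbf{x}\in M^o$, whence every chain joining two distinct points of $M^\sigma$ has positive length and $d^\sigma$ separates points. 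That $(M^\sigma,d^\sigma)$ is a length space I would deduce from the fact that $d^c$ is a length pseudo-distance and that the min-over-chains formula is exactly the induced quotient length distance, in which paths are allowed to pass freely through the collapsed points of $\Sigma$.

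Next I would exhibit the covering $\cU$. Over a smooth point $\mathbf{x}\in M^o$ I take a geodesic ball $U$ disjoint from $\partial M$ and small enough to reach no collapsed tip; with $M_U=U$, which is Riemannian with empty boundary, and $\phi_U=\mathrm{id}$, every chain-detour through $\Sigma$ is strictly longer than the direct conic path, so $d^\sigma=d^c=d_{M_U}$ on $U$ and clause~(1) holds. Over a singular point $p\in\Sigma$, let $N_1,\dots,N_m$ be the boundary components with $\sigma(N_j)=p$, and let $M_U$ be a collar neighbourhood of $N_1\sqcup\dots\sqcup N_m$ given by Proposition-Definition~\ref{prop:collar-ngbhd}; this is a, possibly disconnected, manifold with conic metric and boundary $\bigsqcup_j N_j$. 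Setting $U:=\sigma(M_U)$ and $\phi_U:=\sigma|_{M_U}$, the map $\phi_U$ collapses $\partial M_U$ onto $p$ and restricts to a bijection $M_U\setminus\partial M_U\to U\setminus p$, the candidate for clause~(2); the finite set $\Sigma$ then serves as an admissible singular set.

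The hard part will be showing that this $\phi_U$ is an isometry on $M_U\setminus\partial M_U$, i.e. that the ambient distance $d^\sigma$ restricted to $U\setminus p$ agrees with the intrinsic conic distance $d_{M_U}$. Choosing the collar inside a radius where Corollary~\ref{cor:bound-lev} applies, I would use the conic estimate~\eqref{eq:conic-dist} to show that a path touching the apex has length at least the sum of the two radial distances, so that for points in a common cone-sheet neither the detour through $p$ nor a shortest $M$-path leaving the collar can undercut the intrinsic path; the cross-sheet comparison is then subsumed by the connected-component-wise convention for isometries of Section~\ref{secPseudoDist}. Completeness near the tips is automatic, since the cone structure turns every radial Cauchy sequence into one converging to $p$; when $M$ is compact this already makes $(M^\sigma,d^\sigma)$ a complete length space, and hence a conic singular manifold.

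For the second assertion, the dense embedding of $M$ into a compact globally conic manifold $\overline M$ exhibits the ends of $M$ not touched by $\sigma$ as asymptotically conic, so that $\sigma$ collapses only the genuinely conic boundary components near which the above analysis applies verbatim. I would then set $K$ to be the complement in $M^\sigma$ of such an end and use the conic inversion of Section~\ref{section:conic-inv}: the coordinate change $\rho=1/r$ is a genuine isometry carrying the infinite-cone end with its conic distance onto the open part $M_U\setminus\partial M_U$ of an asymptotically conic manifold, which is precisely the chart required by clause~(3). Finally, global completeness now holds unconditionally, because the asymptotically conic ends lie at infinite distance and no Cauchy sequence can escape to infinity; hence $(M^\sigma,d^\sigma)$ is a globally conic singular manifold.
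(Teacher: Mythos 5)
Your proposal follows the same overall scheme as the paper (direct verification of Definition~\ref{def:conic-sing-manif}), but its decisive step contains a claim that is false as stated. At a collapsed point $p$ you take the model $M_U$ to be the full collar of Proposition-Definition~\ref{prop:collar-ngbhd}, set $U=\sigma(M_U)$, and assert that neither the detour through $p$ nor ``a shortest $M$-path leaving the collar'' can undercut the intrinsic collar distance. The detour through $p$ indeed cannot (it costs $r+r'$, the sum of the radial distances, which the intrinsic distance never exceeds, since one may dip arbitrarily close to the boundary and move angularly at cost $O(\ve)$). But the external claim fails: the collar height of Proposition-Definition~\ref{prop:collar-ngbhd} is of purely differential-topological origin (absence of critical values of a boundary defining function) and carries no metric convexity. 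Take for $M$ the exact cone $\rd r^2+r^2g_N$ over a circle of large length $L$, truncated at radius $2\eta$, with a thin handle of length $\eta/10$ attached at radius $1.2\eta$ joining two antipodal angular regions. For points at radius $0.9\eta$ on either side, the intrinsic collar distance and the apex detour both equal $1.8\eta$, while the path through the handle has length $0.7\eta$; hence $d^\sigma<d_{M_U}$ and $\sigma|_{M_U}$ is not an isometry onto $(U\setminus p,d^\sigma)$. To repair this you must take as model a sub-collar $\{d^c(\cdot,\sigma^{-1}(p))<\delta\}$ with $\delta$ small compared to the collar height (so that competitors leaving the collar cost at least $2(\eta-\delta)\geq r+r'$) \emph{and} small compared to the radius on which $\rho\mapsto\rho^2g^\dd(\rho)$ is monotone as a family of quadratic forms (otherwise paths profit from increasing the radius, e.g.\ for $g^\dd(r)=f(r)^2g_N$ with $f$ dropping steeply, in which case even metric balls about $p$ fail to be convex, so ``smallness'' alone does not save you). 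Note also that your chosen tool, Estimate~\eqref{eq:conic-dist}, gives two-sided bounds with constants $A^c,B^c$ and is structurally incapable of proving the \emph{exact} equality of distances that an isometry requires; one needs exact inputs: radial arcs realize $d^c(\cdot,\dd M)$, a path exceeding radius $R$ has length at least $2R-r-r'$, and a clamping argument based on the monotonicity above.

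For comparison, the paper's own proof does not construct charts at all: it regards the local models as furnished tautologically by the quotient construction and reduces the whole statement to two facts --- $d^\sigma$ is a distance function and $(M^\sigma,d^\sigma)$ is a length space --- whose justification is precisely the dichotomy that the conic distance between a point and a compact boundary component is either realized along a smooth arc or infinite; this is what makes a chain through collapsed points approximable by genuine paths of the same length, and it never asserts the external-path claim on which your argument breaks. Two smaller points: your completeness discussion is explicitly hedged to compact $M$, so as written the first assertion is not established for a general manifold with conic metric; and in the globally conic case the conic inversion of Section~\ref{section:conic-inv} is unnecessary (and used in the wrong direction) --- near $\dd M_\infty$ the metric is asymptotically conic by hypothesis, so the end itself, with the inclusion as chart, already provides the model required by clause (3) of Definition~\ref{def:conic-sing-manif}.
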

\begin{proof}
It suffices to note that the space $(M^\sigma, d^\sigma)$ is a length space, since  $d^\sigma$ is a distance 
function and the conic distance between a point of $M$ and a 
component of its compact boundary is either realized along
a smooth arc or infinite.
\end{proof}
Note that a boundary collapsing map $\sigma$ can be alternatively described as a composition of metric identification on each connected component of $M$ and then glueing points between different connected components, the choice of which points to glue depends on $\sigma$ and may produce topologically distinct spaces.

\subsection{Conic charts }\label{sec:coniccharts}

\begin{propdef}\label{propdef:coniccharts}
Let $(E,d)$ be a complete length space such that $E^o$, 
the complement of a finite subset $\Sgm$, is a Riemannian 
manifold without boundary with metric tensor~$g^o$.
	
The space $(E,d)$ is a  conic singular manifold of dimension 
$n$ if and only if every point $\ba$ of $E$ admits a \em  conic 
chart $\varphi_\ba$ centred at $\ba$, \em i.e. a 
continuous surjective mapping
$$ \vp_\ba: \Cyl(N_\ba,\eta) \to U_\ba ,$$
such that 
\\
(1) $U_\ba$ is an open neighbourhood of $\ba$,
\\
(2) $\Cyl(N_\ba,\eta)$ is an $n$-dimensional cylinder over a smooth compact 
manifold without boundary~$N_\ba$,
\\
(3) $(\vp_\ba)_{| \Cyl(N_\ba,\eta)^o}: \Cyl(N_\ba,\eta)^o
\to U\setminus \ba$ is a diffeomorphism,
\\
(4) $ \vp_\ba^*g^o $ extends smoothly as a conic metric 
over $\Cyl(N_\ba,\eta)$.

Moreover, $(E,d)$ is a globally conic singular manifold if and 
only if additionally  there is an \em  asymptotically conic 
chart \em 
$$
\vp_\infty:  \Cyl(N_\infty,\eta)^o\to U_\infty,
$$
where
\\
(1') $U_\infty$ is a complement of a compact subset 
of $E$,
\\
(2') $\Cyl(N_\infty,\eta)$ is an $n$-dimensional cylinder over a compact smooth manifold without boundary~$N_\infty$
\\
(3') $\vp_\infty$ is a diffeomorphism,
\\
(4') $ \varphi_\infty^*g^o $ is an ac-metric over  $\Cyl(N_\infty,\eta)^o$.

A conic chart is a \em (asymptotically) model conic chart \em 
if the metric in point (4) (resp. in point (4')) is model.
\end{propdef}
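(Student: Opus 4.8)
The plan is to prove both implications by passing between the atlas description of Definition~\ref{def:conic-sing-manif} and the single-chart description, treating smooth points and singular points separately and invoking the Melrose--Wunsch normal form (Theorem~\ref{thm:MelWun}) at singular points and the spherical blowing-up of Section~\ref{section:blup} at smooth points.

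For the direction asserting that a (globally) conic singular manifold admits conic charts, I would argue pointwise. At a singular point $\ba\in E_\sing$ I take a chart $U\in\cU$ of type~(2), so that $(M_U,\dd M_U,g_U)$ is a manifold with a conic metric and $\phi_U$ restricted to $M_U\setminus\dd M_U$ is an isometry onto $U\setminus\ba$. Applying Theorem~\ref{thm:MelWun} I obtain a collar diffeomorphism $\psi$ bringing $g_U$ into model conic form over $\Cyl(\dd M_U,\eta)$; the composite $\vp_\ba:=\phi_U\circ\psi^{-1}$ is then the required conic chart with $N_\ba:=\dd M_U$. Conditions~(1)--(3) are immediate from the diffeomorphism and isometry properties, while condition~(4) holds because, by Proposition~\ref{prop:rksonconicsing}(2), $g^o$ coincides with $(\phi_U)_*g_U$ on the smooth locus, so $\vp_\ba^*g^o$ is exactly $\psi_*g_U$, a model conic metric. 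At a smooth point $\ba\in E^o$ I instead use the spherical blowing-up: a small geodesic ball of $(E^o,g^o)$ centred at $\ba$ blows up to $\bS^{n-1}\times[0,\eta)$, and by the normal-coordinate description of Section~\ref{section:blup} (compare Example~\ref{ex:conic}(1)) the pulled-back metric $(\bbl_\ba)^*g^o$ extends as a conic metric with singular locus $\bS^{n-1}\times 0$; composing $\bbl_\ba$ with $\exp_\ba$ yields a conic chart over $N_\ba=\bS^{n-1}$. For the global statement, the type-(3) chart $E\setminus K$ supplies directly an asymptotically conic chart verifying (1')--(4'), after a collar diffeomorphism puts the ac-metric into the form of Definition~\ref{def:asym-con-met}.

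For the converse I would build the atlas of Definition~\ref{def:conic-sing-manif} from the charts. I set $\Sgm_E:=\Sgm$ and cover $E$ by two families: for each $\ba\in E^o$ a small geodesic ball of $(E^o,g^o)$, which is a type-(1) Riemannian chart; and for each $\ba\in\Sgm$ the conic chart itself, taking $M_U:=\Cyl(N_\ba,\eta)$ endowed with the conic metric $\vp_\ba^*g^o$ (a conic metric by hypothesis~(4)) and $\phi_U:=\vp_\ba$. Completeness and the length-space property of $(E,d)$ are assumed, and the global ac-chart provides the type-(3) chart verbatim. It then remains to check that the interior restriction of each chart is an isometry of the relevant length distances, which is where the real work lies.

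The main obstacle, in both directions, is precisely this matching of distances: conditions~(3)--(4) only guarantee that $\vp_\ba$ is a Riemannian isometry of the metric tensors on the open cylinder, whereas the definition demands an isometry of the induced length distances $d_{M_U}$ and $d$. I would resolve it in three steps. First, by the Myers--Steenrod and Burago--Palais results recalled in the Remark of Section~\ref{section:pseudo-metric}, a diffeomorphism preserving the metric tensor preserves the intrinsic length distance, so $\vp_\ba$ is an isometry for the intrinsic distances of $\big(\Cyl(N_\ba,\eta)^o,\vp_\ba^*g^o\big)$ and $(U_\ba\setminus\ba,g^o)$. Second, shrinking $U_\ba$ to a small metric ball and using that $(E,d)$ is a complete length space, the ambient distance $d$ restricted to $U_\ba$ agrees with the intrinsic length distance there, since any competitor path leaving $U_\ba$ is strictly longer. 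Third, one must reconcile the cone tip: in $(M_U,d_{M_U})$ a shortest curve between two interior points may touch the boundary $N_\ba\times 0$, and correspondingly in $E$ a shortest path may pass through the point $\ba$; Proposition~\ref{prop:conic-dist} quantifies the conic distance and shows that these two behaviours correspond under $\vp_\ba$, so that $d_{M_U}$ restricted to the interior matches $d$ on $U_\ba\setminus\ba$. Assembling these three points yields the required isometries and completes both implications, the model case being the specialization in which $\psi$ (resp.\ the blow-up chart) already produces a model conic metric.
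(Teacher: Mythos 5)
Your proposal is correct and follows essentially the same route as the paper's (very terse) proof: at singular points one takes the atlas mapping $\phi_U$ of Definition~\ref{def:conic-sing-manif} itself as the conic chart (you additionally normalize it via Theorem~\ref{thm:MelWun}, which is harmless), at smooth points one takes $\phi_U\circ\bbl_{\ba_U}$ with $\ba_U=\phi_U^{-1}(\ba)$, and for the converse one builds the atlas from the pull-backs of $g^o$, exactly as you do. One caution on your third step: Proposition~\ref{prop:conic-dist} only provides bi-Lipschitz bounds, so it cannot by itself yield the required \emph{exact} equality of distances at the tip; that matching instead follows from the fact that the interior restriction of $\vp_\ba$ preserves lengths of curves while the collapsed boundary $N_\ba\times 0$ has zero diameter for the degenerate conic metric, so infima of path lengths on the two sides coincide.
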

\begin{proof}
We check those conditions imply the (globally) conic 
singular nature of $(E,d)$ using the pull-backs of $g^o$.
For $\ba\in E_\sing$, let $\vp_\ba$ be the mapping 
$\phi_U$ of point $(2)$ of Definition \ref{def:conic-sing-manif}. For 
$\ba\in U\subset E^o$, the mapping $\phi_U\circ\bbl_{\ba_U}$, 
where $\ba_U = \phi_U^{-1}(\ba)$, will satisfy the requirements.
\end{proof}
Using conic charts, Corollary \ref{cor:bound-lev} 
can be rephrased as
\begin{lemma}\label{lem:bound-lev}
Under assumptions and notation of Proposition-Definition~\ref{propdef:coniccharts} let $d_\ba:E\to\R$ be the distance function to $\ba  $ in the singular conic manifold $(E,d)$.  

Possibly shrinking $U_\ba$, the function $\vp_\ba^*d_\ba$
extends as a boundary defining function in $\Cyl(N_\ba,\eta)$. Moreover, if $\ba\in E\setminus U_\infty$, then possibly shrinking $U_\infty$, the function
$\vp_\infty^*(\frac{1}{d_\ba})$ extends as a boundary defining 
function in $\Cyl(N_\infty,\eta)$.
\end{lemma}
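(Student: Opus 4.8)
The plan is to reduce both assertions to Corollary~\ref{cor:bound-lev} by identifying the pulled-back functions with a conic distance to the boundary computed inside the chart; the shrinking of $U_\ba$ and $U_\infty$ is precisely what forces the intrinsic distance $d_\ba$ on $E$ to agree with the local chart distance. Throughout I would use that, by Proposition~\ref{prop:rksonconicsing}(3), the chart restricted to the open cylinder is a local isometry for $g^o$, so arc-lengths are preserved under $\vp_\ba$ and $\vp_\infty$.

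For the first assertion, property~(4) of Proposition-Definition~\ref{propdef:coniccharts} gives that $\vp_\ba^*g^o$ extends as a conic metric $g^c$ over $\Cyl(N_\ba,\eta)$, so by Corollary~\ref{cor:bound-lev} the associated conic distance $d^c$ has the property that $d^c(-,N_\ba\times\{0\})$ is a boundary defining function, with levels diffeomorphic to $N_\ba$. It then suffices to show that, after shrinking $U_\ba$, one has $\vp_\ba^*d_\ba = d^c(-,N_\ba\times\{0\})$. The inequality ``$\leq$'' is immediate: since $\vp_\ba$ collapses $N_\ba\times\{0\}$ to $\ba$ and is a local isometry on the open cylinder, every arc ending on the boundary descends to an arc ending at $\ba$ of the same length. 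For ``$\geq$'' I would invoke completeness of the length space $(E,d)$: setting $\delta:=\dist(\ba,E\setminus U_\ba)>0$, any arc from a point $\bx$ with $d(\bx,\ba)<\delta$ to $\ba$ that leaves $U_\ba$ must reach $E\setminus U_\ba$ and hence already has length at least $\delta$, so it cannot belong to a minimizing sequence. Thus minimizing arcs stay in $U_\ba\setminus\ba=\vp_\ba(\Cyl(N_\ba,\eta)^o)$ and lift through $\vp_\ba^{-1}$ to arcs of the open cylinder accumulating on $N_\ba\times\{0\}$. Replacing $U_\ba$ by $\{d(-,\ba)<\delta\}$ yields the equality, and with it the boundary defining property.

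For the second assertion I would run the analogous argument through the conic inversion of Section~\ref{section:conic-inv}. By property~(4') the metric $\vp_\infty^*g^o$ is an ac-metric $g^\infty$ on $\Cyl(N_\infty,\eta)^o$ whose associated model conic metric is $g^0=r^4g^\infty$; now $N_\infty\times\{0\}$ represents infinity, and Corollary~\ref{cor:bound-lev} applied to $g^0$ furnishes the boundary defining function $d^0(-,N_\infty\times\{0\})$, which behaves like $r$ near the boundary by Proposition~\ref{prop:conic-dist}. Since $\ba\in E\setminus U_\infty$ lies at finite distance from a fixed separating sphere $\{r=r_0\}$ at the entrance of the end, and every arc from $\bx$ deep in the end to $\ba$ must cross that sphere, the restriction of $d_\ba$ to the end differs from the ac-distance $d^\infty(-,\{r=r_0\})$ by a bounded amount. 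Feeding one endpoint onto this fixed sphere into Proposition~\ref{prop:inv-dist} then gives
\[
r\cdot d_\ba(\bx)\;\asymp\; r\cdot d^\infty(\bx,\{r=r_0\})\;\asymp\; d^0(\bx,\{r=r_0\})\;\asymp\;1 \qquad\text{as } r\to 0 .
\]
Hence $\vp_\infty^*(1/d_\ba)$ is comparable to $r$, vanishes on $N_\infty\times\{0\}$, and its level sets are the distance spheres of $\ba$, which for large radius are diffeomorphic to $N_\infty$ — the exact counterpart of Corollary~\ref{cor:bound-lev} transported by the inversion. The prototype is Example~\ref{ex:asymp-con}, where the Euclidean inversion $\bx\mapsto \bx/|\bx|^2$ makes the identity $1/d_\ba=\iota^*\hat d$ \emph{exact}, with $\hat d$ the distance to the inverted tip, so that the first assertion applies verbatim.

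The main obstacle is the closing step of the second assertion. The comparability $1/d_\ba\asymp r$ is cheap, but a boundary defining function must in addition be \emph{smooth} up to $N_\infty\times\{0\}$ with non-vanishing differential there; equivalently, I must show that $r\cdot d_\ba$ extends smoothly and strictly positively to $r=0$. This forces me to control $d_\ba$ near infinity: after shrinking $U_\infty$ I need to be beyond the cut locus of $\ba$, so that $d_\ba$ is smooth there and solves the eikonal equation for $g^\infty$ up to the boundary, with expansion $d_\ba=r^{-1}(1+o(1))$ and smooth remainder. I would obtain exactly this by pushing, through the inversion diffeomorphism $\iota$ of Section~\ref{section:conic-inv} — which carries $g^0$-geodesics to $g^\infty$-geodesics up to reparametrisation — the regularity that Corollary~\ref{cor:bound-lev} already encodes for the conic distance to a tip, mirroring the Euclidean prototype above. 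This transport of smoothness and of the non-degenerate differential across the inversion, rather than the mere metric comparison, is the technical heart of the lemma.
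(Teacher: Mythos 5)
Your treatment of the first assertion is correct and is, in substance, what the paper intends: the paper gives no proof of this lemma at all (it is introduced only by the phrase ``Using conic charts, Corollary~\ref{cor:bound-lev} can be rephrased as''), and your localization argument --- minimizing sequences from points $\bx$ with $d(\bx,\ba)<\delta:=\dist(\ba,E\setminus U_\ba)$ cannot leave $U_\ba$, hence $\vp_\ba^*d_\ba$ coincides near the boundary with the conic distance to $N_\ba\times\{0\}$, which is a boundary defining function by Corollary~\ref{cor:bound-lev} (after Theorem~\ref{thm:MelWun} it is literally the radial coordinate of a model chart) --- is exactly the glue needed to justify that rephrasing.

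The second assertion is where there is a genuine gap, and you have located it yourself; the problem is that the gap cannot be closed along the route you sketch, because the statement you are trying to prove in the $C^\infty$ sense is false. Your plan (shrink $U_\infty$ to get beyond the cut locus of $\ba$, then transport regularity through the conic inversion) fails twice over. First, the inversion of Section~\ref{section:conic-inv} identifies $d^\infty$ with the conic distance of an auxiliary model cone, not with $d_\ba$; the two agree only up to a bounded error, and bounded errors say nothing about smoothness. Second, and decisively, the cut locus of $\ba$ need not be bounded, so no shrinking of $U_\infty$ avoids it. Concretely, take $E=\R^2$ with the flat-cone metric $\rd r^2+c^2r^2\rd\tht^2$, $0<c<1$ (a globally conic singular manifold with tip at $\bbo$), and $\ba$ the point with polar coordinates $(r_0,0)$, $r_0>0$. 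For every large $r$, the point $(r,\pi)$ is joined to $\ba$ by two distinct minimizing geodesics, so the cut locus is a whole ray going to infinity; writing the asymptotic chart as $(\tht,R)\mapsto(1/R,\tht)$ one computes $\vp_\infty^*(1/d_\ba)=R+r_0\cos\bigl(c\min(|\tht|,2\pi-|\tht|)\bigr)R^2+O(R^3)$, whose $\tht$-derivative jumps across $\{\tht=\pi\}$ for every $R>0$. Thus $\vp_\infty^*(1/d_\ba)$ is Lipschitz and differentiable at the boundary with non-degenerate differential, but it is not smooth --- not even $C^1$ --- on any neighbourhood of $N_\infty\times\{0\}$, so it cannot extend as a boundary defining function in the paper's strict ($C^\infty$) sense. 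What is true, what your comparability step via Proposition~\ref{prop:inv-dist} already proves, and what the paper actually needs in its later uses (Propositions~\ref{prop:conicchartsunique} and~\ref{prop:unique-resol}), is the weaker statement that $\vp_\infty^*(1/d_\ba)$ extends continuously by $0$ and is equivalent, with bounded multiplicative constants, to the radial coordinate of the cylinder, i.e.\ to a genuine boundary defining function. You should either finish by proving that weakened conclusion (which your second paragraph essentially does) or point out that the lemma's second half must be read in this weakened form.
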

\begin{proposition}\label{prop:conicchartsunique}
Let $(E,d)$ be a (resp. globally) conic singular manifold with two 
(resp. asymptotically) conic charts  $\vp_\ba: \Cyl(N_\ba,\eta)\to U_\ba$
and $\vp_\ba':  \Cyl(N_\ba',\eta) \to U_\ba'$ at a point $\ba\in E$ (resp. $\ba=\infty$).
Then $N_\ba$ is diffeomorphic to $N_\ba'$.
\end{proposition}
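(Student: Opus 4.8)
The plan is to realise each link as an intrinsic metric sphere around the centre and then to observe that, for a sufficiently small radius, the single metric sphere is simultaneously covered by both charts, forcing the two links to be diffeomorphic. The key point is that the distance to the centre is intrinsic to $(E,d)$ and, by Lemma~\ref{lem:bound-lev}, becomes a boundary defining function in every conic chart at that centre.

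First consider a point $\ba\in E$. The function $d_\ba:=d(\ba,-)$ does not depend on the chart. By Lemma~\ref{lem:bound-lev}, after shrinking $U_\ba$ and $U_\ba'$ if necessary, the pulled-back functions $x:=\vp_\ba^*d_\ba$ and $x':=(\vp_\ba')^*d_\ba$ extend as boundary defining functions on $\Cyl(N_\ba,\eta)$ and $\Cyl(N_\ba',\eta)$. By the collar neighbourhood result (Proposition-Definition~\ref{prop:collar-ngbhd}, compare Corollary~\ref{cor:bound-lev}) there is an $r_0>0$ such that, for every $r\in(0,r_0]$, the level sets $\{x=r\}$ and $\{x'=r\}$ are diffeomorphic to the respective boundaries $N_\ba$ and $N_\ba'$. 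Since $\vp_\ba$ restricts to a diffeomorphism $\Cyl(N_\ba,\eta)^o\to U_\ba\setminus\ba$ and $x=\vp_\ba^*d_\ba$, it carries $\{x=r\}$ diffeomorphically onto $\{d_\ba=r\}\cap U_\ba$; likewise $\vp_\ba'$ carries $\{x'=r\}$ onto $\{d_\ba=r\}\cap U_\ba'$. Thus $N_\ba\cong\{d_\ba=r\}\cap U_\ba$ and $N_\ba'\cong\{d_\ba=r\}\cap U_\ba'$ for all small $r$. Because $U_\ba\cap U_\ba'$ is an open neighbourhood of $\ba$, there is $\rho>0$ with $\{d_\ba<\rho\}\subset U_\ba\cap U_\ba'$, so for any $r<\rho$ the whole sphere $\{d_\ba=r\}$ lies in $U_\ba\cap U_\ba'$ and the two portions coincide. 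Fixing $r<\min(r_0,\rho)$ gives $N_\ba\cong\{d_\ba=r\}\cong N_\ba'$.

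For $\ba=\infty$ in the globally conic case the argument is the same with $d_\ba$ replaced by $1/d_{\ba_0}$ for an auxiliary fixed point $\ba_0$. Shrinking the two asymptotically conic charts so that $\ba_0\notin U_\infty\cup U_\infty'$, Lemma~\ref{lem:bound-lev} shows that $\vp_\infty^*(1/d_{\ba_0})$ and $(\vp_\infty')^*(1/d_{\ba_0})$ are boundary defining functions, whose small level sets $\{1/d_{\ba_0}=s\}$ are diffeomorphic to $N_\infty$ and $N_\infty'$ and are identified with the metric sphere $\{d_{\ba_0}=1/s\}$. Each complement $U_\infty=E\setminus K$ is the exterior of a compact, hence bounded, set; writing $K\cup K'\subset\{d_{\ba_0}\le R_0\}$, for every $s<1/R_0$ the sphere $\{d_{\ba_0}=1/s\}$ avoids $K\cup K'$ and so lies in $U_\infty\cap U_\infty'$. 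Choosing $s$ small enough then yields $N_\infty\cong\{d_{\ba_0}=1/s\}\cong N_\infty'$.

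The only delicate step I anticipate is the final identification: one must guarantee that the single intrinsic level set of the distance function sits inside both chart neighbourhoods at once. In the finite case this is secured by taking the radius $r$ small (so the sphere stays in a common ball), and at infinity by taking the radius $1/s$ large, which uses the boundedness of the collapsed compacta $K,K'$. Everything else is formal once Lemma~\ref{lem:bound-lev} provides that the distance to the centre is a boundary defining function in each chart and the collar theorem provides that the levels of a boundary defining function are diffeomorphic to the boundary.
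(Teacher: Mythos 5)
Your proof is correct and follows essentially the same route as the paper's: pull back the intrinsic distance to the centre via Lemma~\ref{lem:bound-lev} to get a boundary defining function in each chart, use the collar theorem to identify its small levels with the links, and match the two links through the common metric sphere $\{d_\ba=r\}$ (resp.\ $\{d_{\ba_0}=1/s\}$) in $E$. You merely make explicit two points the paper leaves implicit, namely that the sphere must lie in both chart neighbourhoods at once and the choice of the auxiliary base point for the asymptotic case.
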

\begin{proof}
Since  an appropriate restriction of the mapping $\vp_\ba$ is a Riemannian
isometry between $\Cyl(N_\ba,\eta)^o$ and $ U_\ba\setminus \ba$, for $0<\ve<\eta$ 
with $\eta$ small enough, we find that $(\vp_\ba^*d_\ba)^{-1} (\ve) $, 
$((\vp_\ba')^*d_\ba)^{-1} (\ve) $ and $(d_\ba)^{-1}  (\ve)$ are isometric 
manifolds by Lemma \ref{lem:bound-lev}, using collar neighbourhood 
diffeomorphisms. Since the levels of a boundary defining function for values small 
enough are diffeomorphic to the boundary, we get the claim.
\end{proof}
Therefore, by Proposition \ref{prop:conicchartsunique} and 
Theorem~\ref{thm:MelWun} our definition of conic charts is not 
restrictive when we ask the conic metric to be model.
\begin{corollary}
A complete length space is a conic singular manifold if and only 
if every point admits model conic charts.
\end{corollary}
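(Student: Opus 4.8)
The plan is to read the corollary off Proposition-Definition~\ref{propdef:coniccharts}, which already characterises conic singular manifolds through the existence of conic charts, and then to upgrade an arbitrary conic chart to a model one by means of the Melrose--Wunsch normal form of Theorem~\ref{thm:MelWun}.

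First I would dispose of the ``if'' direction, which is immediate: a model conic metric is in particular a conic metric, so every model conic chart is \emph{a fortiori} a conic chart in the sense of Proposition-Definition~\ref{propdef:coniccharts}. Consequently, if every point of a complete length space $(E,d)$ admits model conic charts, then the hypotheses of Proposition-Definition~\ref{propdef:coniccharts} are satisfied, whence $(E,d)$ is a conic singular manifold.

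For the converse, suppose $(E,d)$ is a conic singular manifold. By Proposition-Definition~\ref{propdef:coniccharts} each point $\ba$ admits a conic chart $\vp_\ba:\Cyl(N_\ba,\eta)\to U_\ba$ for which $g^c:=\vp_\ba^*g^o$ extends as a conic metric on the cylinder $\Cyl(N_\ba,\eta)$, a manifold with conic metric whose compact boundary is $N_\ba\times\{0\}$. The next step is to apply Theorem~\ref{thm:MelWun} to the triple $(\Cyl(N_\ba,\eta),N_\ba\times\{0\},g^c)$: it yields a collar neighbourhood diffeomorphism $\psi:U'\to\Cyl(N_\ba,\eta')$ of the boundary such that $\psi_*g^c$ is a model conic metric. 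Setting $\vp_\ba':=\vp_\ba\circ\psi^{-1}$ on $\Cyl(N_\ba,\eta')$ and shrinking $U_\ba$ to the neighbourhood $\vp_\ba(U')$ of $\ba$, one computes $(\vp_\ba')^*g^o=(\psi^{-1})^*g^c=\psi_*g^c$, which is a model conic metric, so $\vp_\ba'$ is a model conic chart at $\ba$. In the globally conic case the asymptotic chart $\vp_\infty$ is handled in the same spirit, the only difference being that $\vp_\infty^*g^o$ is an ac-metric; by Remark~\ref{rk:scattering} a change of coordinates near $N_\infty\times\{0\}$ presents it as the conformal factor $r^{-4}g^0$ of a model conic metric $g^0$, that is, as an asymptotically model chart.

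The hard part is not conceptual but bookkeeping: I must check that precomposing with the collar diffeomorphism $\psi$ preserves the defining properties of a conic chart. Since $\psi$ is only defined on a collar $U'$ of the boundary, this forces shrinking the height $\eta$, and I would verify that continuity, surjectivity onto a neighbourhood of $\ba$, and the diffeomorphism property off the vertex all survive, using that $\psi$ is a diffeomorphism carrying the boundary stratum to itself. Here Proposition~\ref{prop:conicchartsunique} is reassuring, as it guarantees that the link $N_\ba$ is well defined up to diffeomorphism, so that passing to the possibly smaller cylinder $\Cyl(N_\ba,\eta')$ entails no loss of generality. The asymptotic case relies in addition on the scattering normal form of Remark~\ref{rk:scattering}, which I would invoke as a black box rather than reprove.
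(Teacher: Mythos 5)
Your proposal is correct and takes essentially the same route as the paper, which deduces this corollary precisely from Theorem~\ref{thm:MelWun} (to upgrade the metric to a model one via a collar diffeomorphism) together with Proposition~\ref{prop:conicchartsunique} (so that the link $N_\ba$ is unambiguous), exactly the two ingredients you invoke. Your explicit composition $\vp_\ba\circ\psi^{-1}$ and the bookkeeping about shrinking the collar merely spell out details the paper leaves implicit.
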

\subsection{Conic resolution and completion}\label{section:CRofCSM}
From definition of conic singular manifolds and Proposition~\ref{prop:rksonconicsing} we obtain immediately the following   which	can be treated as a converse to Proposition \ref{propQuotient}.  

\begin{proposition}[Isometric resolution of conic singularities]\label{propGLOBALchart} A complete length space $(E,d)$ is a conic 
singular manifold if and only if there exists a manifold $M$ with conic metric $g$ 
and a continuous surjective mapping 
$$
\phi: M   \to E
$$
such that $\phi(\partial M) = E_\sing$ and the restriction $\phi_{|M\setminus 
\partial M}$ is a Riemannian isometry over $E\setminus E_\sing$. 
	
A complete length space $(E,d)$ is a globally conic singular manifold if and only 
if additionally there exists a compact manifold $\Mbar$ such that
$M  = \Mbar\setminus \dd M_\infty$ for some union of boundary components 
$\dd M_\infty$ of $\Mbar$ and  $g $ is a globally conic metric on 
$\Mbar$. 
\end{proposition}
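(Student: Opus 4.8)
The plan is to prove both equivalences by directly invoking the two characterizations already established. For the first equivalence (conic singular manifold $\iff$ isometric resolution), the forward direction assembles the local resolutions guaranteed by Definition~\ref{def:conic-sing-manif} into a single global resolving manifold, while the backward direction reads the definition off from the given $\phi$. The key observation is that everything in sight is built from transition maps that are smooth Riemannian isometries, so the gluings are well-behaved.

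For the forward direction, suppose $(E,d)$ is a conic singular manifold. First I would take the cover $\cU$ and local models $M_U$ from Definition~\ref{def:conic-sing-manif}. Over the smooth part $E^o$ we already have, by Proposition~\ref{prop:rksonconicsing}(2), a globally defined Riemannian metric $g^o$; the task is to attach a boundary component over each singular point $\ba\in E_\sing$. For each $\ba$ pick a chart of type (2) with model $(M_U,\dd M_U,g_U)$ and conic metric; by Proposition~\ref{prop:conicchartsunique} the boundary $N_\ba=\dd M_U$ is well-defined up to diffeomorphism independently of the chart. I would then define $M$ by gluing the disjoint union $\bigsqcup_{\ba\in E_\sing}\Cyl(N_\ba,\eta)$ to $E^o$ along the open collars, using the transition maps $\phi_U^{-1}\circ\phi_V$, which are smooth diffeomorphisms and Riemannian isometries on overlaps. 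Because $E_\sing$ is finite and each model is a manifold with compact boundary, $M$ is a smooth manifold with conic metric $g$, and the quotient-type map $\phi:M\to E$ collapsing each $\dd M$-component to its singular point (compare the boundary collapsing map of Proposition~\ref{propQuotient}) is continuous, surjective, carries $\partial M$ onto $E_\sing$, and restricts to a Riemannian isometry over $E\setminus E_\sing$ by construction. The backward direction is immediate: given such $\phi$ and $M$, the restriction of $\phi$ to a model conic chart over each boundary component of $M$ supplies charts of type (2), and $E^o$ inherits the smooth Riemannian structure from $M\setminus\partial M$ via the isometry; completeness of $(E,d)$ is part of the hypothesis, so all conditions of Definition~\ref{def:conic-sing-manif} hold.

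For the global statement I would treat the end separately. If $(E,d)$ is globally conic singular, Definition~\ref{def:conic-sing-manif}(3) supplies a distinguished chart $\phi_U:(M_U\setminus\partial M_U,d_{M_U})\to(U,d)$ on the complement of a compact set $K$, where $M_U$ carries an asymptotically conic metric with boundary $N_\infty$. By the conic inversion of Section~\ref{section:conic-inv}, an ac-metric near $N_\infty$ is, after inverting $r\mapsto 1/r$, equivalent to a conic metric; this lets me adjoin the boundary-at-infinity component $\dd M_\infty=N_\infty$ to the manifold $M$ built above and obtain a compact manifold $\Mbar$ with $M=\Mbar\setminus\dd M_\infty$ and $g$ a globally conic metric in the sense of Definition~\ref{def:glob-conic-manif}. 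The converse again reads off the definition: the asymptotically conic part of $g$ near $\dd M_\infty$ gives, via $\phi$ and a model chart, a chart of type (3).

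The main obstacle is the compatibility of the gluings: I must check that the transition maps between the collar neighbourhoods $\Cyl(N_\ba,\eta)^o$ and the smooth part $E^o$ are genuine smooth isometries, so that the pulled-back metrics agree on overlaps and assemble into a single smooth conic metric $g$ on $M$ rather than merely an equivalence class. This is precisely where the Myers--Steenrod theorem (cited after Definition~\ref{def:conic-sing-manif}) does the work, upgrading the component-wise metric-space isometries to smooth Riemannian isometries; once that is granted, and once Theorem~\ref{thm:MelWun} and Proposition~\ref{prop:conicchartsunique} guarantee the model near each singular point is unambiguous, the construction of $\phi$ and the verification of its properties are routine.
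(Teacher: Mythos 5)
Your proposal is correct and follows essentially the same route as the paper, which offers no separate argument but derives the proposition directly from Definition~\ref{def:conic-sing-manif} and Proposition~\ref{prop:rksonconicsing} (viewing it as a converse to the quotient construction of Proposition~\ref{propQuotient}), with Myers--Steenrod guaranteeing that the transition isometries are smooth. Your write-up merely makes explicit the gluing of the local models and the reading-off of the definition that the paper declares immediate, so there is nothing genuinely different to compare.
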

Given a (globally) conic singular manifold $E$, a pair $(M,\phi)$ as in Proposition~\ref{propGLOBALchart} will be referred to as a \em (global) conic resolution of $E$ with boundary at infinity~$\dd 
M_\infty$. \em
A (global) conic resolution s essentially unique as the following shows.
\begin{proposition}\label{prop:unique-resol}
Let $(M_p,\dd M_p, g_p^c)$ be two (resp. globally) conic manifolds for $p=1,2$
such that there exist mappings $\phi_p:M_p\to E$ satisfying 
Proposition~\ref{propGLOBALchart}. Then  for $p=1,2$ the manifolds $(M_p,\dd M_p)$ 
are diffeomorphic and $M_p\setminus \dd M_p$ are isometric. Moreover, the 
boundaries $\dd M_p$ (resp. and boundaries at infinity $\dd M_{p,\infty}$) are 
isometric with respect to the  Riemannian metric on the boundary associated to the 
(resp. asymptotically) conic metric.
\end{proposition}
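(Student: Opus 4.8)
The plan is to build a single isometry on the smooth loci and then show it extends smoothly and rigidly across the singular boundaries, from which all three assertions follow at once. Since $\phi_p|_{M_p\setminus \dd M_p}$ is a Riemannian isometry onto $E\setminus E_\sing$ for $p=1,2$, and $\phi_p^* g^o = g_p^c$ on the interiors (with $g^o$ the glued metric of Proposition~\ref{prop:rksonconicsing}), the composition
$$
\psi := \phi_2^{-1}\circ\phi_1 : M_1\setminus \dd M_1 \to M_2\setminus \dd M_2
$$
satisfies $\psi^* g_2^c = g_1^c$ and is thus a Riemannian isometry of the two interiors. By the Myers--Steenrod theorem (cf. the remark in Section~\ref{section:pseudo-metric}) $\psi$ is a smooth diffeomorphism, which already yields the isometry of the interiors.

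Next I would fix the correct radial coordinate. For a singular point $\ba\in E_\sing$ the distance function $d_\ba=d(-,\ba)$ satisfies $d_\ba\circ\phi_2\circ\psi = d_\ba\circ\phi_1$, so $\psi$ preserves the pulled-back distance to $\ba$. By Theorem~\ref{thm:MelWun} I choose, near each connected boundary component $N_1$ of $\dd M_1$ with $\phi_1(N_1)=\ba$, a model collar in which $g_1^c=\rd r^2+r^2 g_1^\dd(r)$; comparing radial paths shows that in such coordinates $r$ is exactly the conic distance $d^c(-,\dd M_1)$, hence $r=\phi_1^* d_\ba$, and similarly on $M_2$. Corollary~\ref{cor:bound-lev} then guarantees that for small $\ve$ the levels $\{r=\ve\}$ are diffeomorphic to the respective boundary components, and since $\psi$ preserves $d_\ba$ it carries $\{r=\ve\}\subset M_1$ onto $\{r=\ve\}\subset M_2$. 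Connectedness of these levels produces a well-defined bijection $N_1\leftrightarrow N_2$ between the connected boundary components of $\dd M_1$ and $\dd M_2$, and in the two model collars $\psi$ takes the form $\psi(\by,r)=(\Psi(\by,r),r)$.

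The crux is then a one-line computation rather than an analytic extension argument. Writing out $\psi^* g_2^c=g_1^c$, the coefficient of $\rd r^2$ equals $1+r^2 g_2^\dd(r)(\dd_r\Psi,\dd_r\Psi)$; since $g_2^\dd(r)$ is positive definite this forces $\dd_r\Psi\equiv 0$, so the isometry respects the product structure and $\Psi=\Psi(\by)$ is independent of $r$. The remaining components of the identity read $\Psi^* g_2^\dd(r)=g_1^\dd(r)$ for all $r>0$, hence by continuity also at $r=0$. Consequently $\psi(\by,r)=(\Psi(\by),r)$ extends smoothly across $r=0$ to a diffeomorphism of the closed collars sending $N_1$ onto $N_2$, and $\Psi:(N_1,g_1^\dd(0))\to(N_2,g_2^\dd(0))$ is an isometry of the associated Riemannian metrics on the boundary. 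Glueing these local extensions over all boundary components upgrades $\psi$ to a diffeomorphism $M_1\to M_2$ carrying $\dd M_1$ onto $\dd M_2$, proving the diffeomorphism, the interior isometry, and the boundary isometry simultaneously.

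Finally, the global case reduces to the same scheme at the boundary at infinity via a conic inversion: by Lemma~\ref{lem:bound-lev} the function $1/d_\ba$ pulls back to a boundary defining function near $\dd M_{p,\infty}$, and setting $R=1/r$ the ac-metric becomes the model conic metric $\rd R^2+R^2 g^\dd(R)$ of Section~\ref{section:conic-inv}; repeating the computation in the variable $R$ gives the diffeomorphism of the boundaries at infinity together with the isometry of their associated Riemannian metrics. I expect the main obstacle to be the setup rather than the rigidity: one must check that the model-collar radial coordinate genuinely coincides with the intrinsic distance to the singular point (so that $\psi$ preserves it) and match the connected boundary components through the level sets. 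Once those points are settled, the $\rd r^2$-coefficient identity forces the product form and makes the smooth extension automatic.
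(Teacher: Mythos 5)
Your treatment of the finite singular points is correct, and it takes a genuinely different route from the paper's. The paper compares both resolutions to a common model: it pulls the conic charts of $E$ back through $\phi_p^{-1}$, uses Lemma~\ref{lem:bound-lev} to identify $\phi_p^*(d(-,E_\sing))$ with a boundary defining function, deduces $\dd M_p\cong N$ from the level sets, glues isometrically, and quotes Proposition~\ref{prop:conicchartsunique} for the boundary isometry. You instead analyse the transition map $\psi=\phi_2^{-1}\circ\phi_1$ directly in model collars (Theorem~\ref{thm:MelWun}) and use the rigidity of the $\rd r^2$-coefficient. The point that makes this legitimate, which you correctly flag and which does hold, is that for a model conic metric the collar coordinate $r$ \emph{equals} the conic distance to the boundary, hence equals $\phi_p^*d_\ba$ near the relevant boundary component; since $\psi$ intertwines $\phi_1^*d_\ba$ and $\phi_2^*d_\ba$, it is of the form $(\by,r)\mapsto(\Psi(\by,r),r)$ and your computation forces $\dd_r\Psi\equiv 0$. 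This yields the smooth extension, the component matching and the boundary isometry in one stroke, and is arguably more self-contained than the paper's gluing argument.

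The globally conic case, however, has a genuine gap. At the boundary at infinity there is no point of $E$ whose distance function coincides with the model radial coordinate: Lemma~\ref{lem:bound-lev} only asserts that $\vp_\infty^*(1/d_\ba)$ is \emph{a} boundary defining function, and in fact $d_\ba = 1/r + O(1)$, so $\psi$ preserves $1/d_\ba$ but need not preserve $r$. Consequently $\psi$ need not have product form near $\dd M_\infty$, and the $\rd R^2$-coefficient argument cannot be ``repeated in the variable $R$''. This is not a technicality: take $E=\Rn$ with the Euclidean metric, $\phi_1=\textup{id}$ and $\phi_2$ a translation by $\bv\neq \bbo$; both are global conic resolutions, and in the coordinates of Example~\ref{ex:asymp-con} the transition map near infinity is
\begin{equation*}
(\bu,R)\;\longmapsto\;\left(\frac{\bu+R\,\bv}{\,|\bu+R\,\bv|\,},\ \frac{R}{\,|\bu+R\,\bv|\,}\right),
\end{equation*}
which does not fix the boundary defining coordinate $R$ and is not of product form, although it does extend smoothly and restricts to the identity on the sphere at infinity. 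So the conclusion is true, but your mechanism for proving it fails at infinity: both the smooth extension of $\psi$ across $\dd M_{p,\infty}$ and the isometry of the associated boundary metrics there require a different argument, for instance the paper's route through level sets of the genuinely preserved function $1/d_\ba$ (Lemma~\ref{lem:bound-lev} and Proposition~\ref{prop:conicchartsunique}), supplemented by a rescaling limit to recover $g^\dd(0)$, or the conic completion combined with your finite-point rigidity applied to a conic metric equivalent to $r^4 g^\infty$ — but the latter identification again cannot be made exact, only up to equivalence, which is precisely what your computation does not tolerate.
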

\begin{proof}
Let $M_p^o = M_p\setminus\dd M_p$ and 
$\phi_p^o = \phi_p|_{M_p^o}$. Thus $(\phi_2^o)^{-1}\circ
\phi_1^o : M_1^o\to M_2^o$ is a Riemannian isometry.	
\\
Now, let $N = \cup_{\ba\in E_\sing} N_\ba$. Set 
$$
U_\eta : = \{d(-,E_\sing) <\eta\} \;\; {\rm and} \;\; 
\Sgm_\eta :=  \{d(-,E_\sing) =\eta\}.
$$
If $\eta$ is small enough, we define the mapping
$\vp_\eta: \Cyl(N,\eta) \to U_\eta$ as the union $\sqcup_{\ba\in E_\sing} \vp_\ba$
of the conic charts centred at $E_\sing$. Lemma \ref{lem:bound-lev} implies that 
$\Sgm_\ve$ is diffeomorphic to $N$ for $0<\ve<\eta$. Thus $(\phi_p^o)^{-1}\circ
\vp_\eta: \Cyl(N,\eta)\setminus N\to U_\eta^p\setminus\dd M_p$ is an isometry, where 
$$
U_\eta^p = \phi_p^{-1}(U_\eta) = \{d_p^c(-,\dd M_p) <\eta\} \;\; {\rm and} \;\;
\Sgm_\eta^p :=  \phi_p^{-1}(\Sgm_\eta) =  \{d_p^c(-,\dd M_p) = \eta\}.
$$
Thus $\phi_p^*(d(-,E_\sing))$ extends as a boundary defining function of $\dd M_p$ by Lemma \ref{lem:bound-lev}, so that we deduce $\dd M_p = N$. This is enough to guarantee that 
$U_\eta^p$ is isometric to $\Cyl(N,\eta)$ via a collar neighbourhood diffeomorphism of $\dd M_p$ and a smaller $\eta$. Then we glue isometrically.	
\\
Isometry on the boundary follows from consideration as in Proposition~\ref{prop:conicchartsunique}.
\end{proof}
\begin{propdef}
For any globally conic singular manifold $(E,d)$ there exists a compact conic 
singular manifold $(\Ebar,\dbar)$ called a \em conic completion \em of $(E,d)$ 
such that
\\
(1) the space $E$ is topologically embedded in $\Ebar =E \sqcup \infty_E$, 
where the \em set of conic ends $\infty_E$ of $E$ \em is finite
\\
(2) the distances $d$ and $\dbar$ are equivalent  over any compact subset 
of $E$
\\
(3) there exists a neighbourhood $U$ of $E_\sing$ such that $d_{|U}=\dbar_{|U}$
\\
(4) the global conic resolution space $\Mbar=M\cup \dd M_\infty$ of $(E,d)$ is diffeomorphic to the conic resolution space of $(\Ebar,\dbar)$ 
\\
(5) the Riemannian tensor $\overline{g}^o$ on $\Ebar^o$ has the same 
associated Riemannian metric on $\dd M_\infty$ as the tensor $g^o$ of $E^o$
\end{propdef}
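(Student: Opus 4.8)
The plan is to build the completion at the level of the conic resolution and then descend to the quotient. By Proposition~\ref{propGLOBALchart}, fix a global conic resolution $\phi:\Mbar \to E$, where $\Mbar$ is compact with $\dd\Mbar = \dd M_\sing \sqcup \dd M_\infty$, where $\phi$ collapses $\dd M_\sing$ onto $E_\sing$ and restricts to a Riemannian isometry over $E^o$, and where $g$ is a globally conic metric on $\Mbar$: conic near $\dd M_\sing$ and asymptotically conic on a collar of $\dd M_\infty$. The guiding idea is that the only obstruction to compactness is that the ac-metric pushes $\dd M_\infty$ to infinity, so replacing the ac-metric near $\dd M_\infty$ by its associated conic metric (Definition~\ref{def:asym-con-met}) pulls the ends back to finite conic points without disturbing the geometry near $E_\sing$.

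Concretely, I would work in a collar $\vp:U\to\Cyl(\dd M_\infty,\eta)$ and write the ac-metric as $g^\infty = r^{-4}g^0$, with $g^0 = \rd r^2 + r^2 g^\dd(r)$ the associated model conic metric and $g^\dd(0)=r^2 g^\infty|_{\dd M_\infty}$ the associated Riemannian metric on the boundary. Choosing a smooth cut-off $\chi(r)$ equal to $1$ near $r=\eta$ and to $0$ near $r=0$, I set $\bar g := \chi\, g^\infty + (1-\chi)\, g^0$ on the collar and $\bar g := g$ elsewhere. As a convex combination of positive-definite tensors, $\bar g$ is a Riemannian metric on $\Mbar\setminus\dd\Mbar$ that agrees with $g$ near $\dd M_\sing$ and equals the model conic metric $g^0$ near $\dd M_\infty$; hence $\bar g$ is a conic metric on the compact manifold $(\Mbar,\dd\Mbar)$ with $\sing(\bar g)=\dd\Mbar$. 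I then take the boundary collapsing map $\bar\sigma$ sending $\dd M_\sing$ to $E_\sing$ and each component of $\dd M_\infty$ to a new point, and set $(\Ebar,\dbar):=(\Mbar^{\bar\sigma},d^{\bar\sigma})$. By Proposition~\ref{propQuotient} this is a conic singular manifold, and it is compact since $\Mbar$ is compact and all of $\dd\Mbar$ has been collapsed; I let $\infty_E$ be the finite set of collapsed components of $\dd M_\infty$.

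It remains to verify (1)--(5). Since $M=\Mbar\setminus\dd M_\infty$ is open in $\Mbar$, its image is open in the quotient and carries exactly the topology of $E=\Ebar\setminus\infty_E$, which gives the topological embedding (1) with $\infty_E$ finite. For (2), a $d$-compact subset of $E$ is bounded away from the ends (which sit at infinite $d$-distance), hence lies in a region where $g$ and $\bar g$ are two conic metrics on a compact piece and so are equivalent by Corollary~\ref{cor:conicequivalent}; the induced distances are therefore equivalent there. For (3), $\bar g=g$ exactly on a neighbourhood of $\dd M_\sing$, so the inner distances agree, and choosing the neighbourhood $U$ of $E_\sing$ small enough that length-minimizing arcs between its points stay inside it — which holds by the conic estimate~\eqref{eq:conic-dist} of Proposition~\ref{prop:conic-dist} — upgrades this to $d_{|U}=\dbar_{|U}$. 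For (4), by construction $(\Mbar,\bar g)$ together with $\bar\sigma$ is a conic resolution of $(\Ebar,\dbar)$, so by the uniqueness Proposition~\ref{prop:unique-resol} the resolution space of $(\Ebar,\dbar)$ is diffeomorphic to $\Mbar$, which is the resolution space of $(E,d)$. Finally (5) is immediate, since near $\dd M_\infty$ the metric $\bar g$ equals the associated conic metric $g^0$, whose associated boundary metric is precisely $g^\dd(0)$, the same Riemannian metric as the one associated to the ac-metric.

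The main obstacle I expect lies in the metric (rather than smooth) part of the argument, namely (2) and (3): one must ensure that surgically replacing the ac-metric by its associated conic metric near the ends leaves the distance function genuinely unchanged near $E_\sing$, and only equivalent — not distorted — on compact subsets. This rests on controlling length-minimizing arcs, for which the two-sided conic comparison~\eqref{eq:conic-dist}, together with the facts that the modified collar lies at uniformly positive $d$-distance from $E_\sing$ and that the ends are at infinite distance, is the essential input; the diffeomorphism statement (4) and the boundary-metric statement (5) are then bookkeeping via Proposition~\ref{prop:unique-resol} and Definition~\ref{def:asym-con-met}.
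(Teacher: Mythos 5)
Your proposal is correct and takes essentially the same route as the paper: starting from the global conic resolution of Proposition~\ref{propGLOBALchart}, replacing the ac-metric near $\dd M_\infty$ by a conic metric with the same associated Riemannian metric on the boundary (your cut-off interpolation is a concrete realization of the paper's ``smooth glueing'' of $g$ with such a conic metric $h$), collapsing each component of $\dd M_\infty$ to a new point, and equipping the quotient with the distance of Proposition~\ref{propQuotient}. Your verification of (1)--(5) via Corollary~\ref{cor:conicequivalent} and the uniqueness Proposition~\ref{prop:unique-resol} likewise coincides with the paper's.
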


\begin{proof}
Let $(E,d)$ be a globally conic singular manifold and $\Mbar=M\cup 
\dd M_\infty$ be its global conic resolution with the globally conic metric 
$g$. By Proposition~\ref{propGLOBALchart} the topological space $E$ 
embedds isometrically as an open dense subset into $\overline{E}:=E\cup \dd 
M_\infty$. By Proposition~\ref{prop:unique-resol} the set $\dd M_\infty$ is 
unique up to 
diffeomorphism and has $k$ connected components. If $\sigma:M\to E$ is the 
boundary collapsing function of $E$, then let us extend it as a boundary 
collapsing function ${\sigma}:\Mbar\to E\sqcup\{p_1,\dots, p_k\}$ so that 
each boundary component of $\dd M_\infty$ maps to a different point 
$p_j\in\infty_E:=\{p_1,\dots, p_k\}$. 
	
Let $\overline{g}$ be the conic metric on $\Mbar$ obtained as a smooth glueing of the conic metric $g$ restricted to a neighbourhood of $\dd M$ with a conic metric $h$ over $\Mbar$, having the same Riemannian metric associated on $\dd M_\infty$ as the ac-metric $g$ near $\dd M_\infty$.

Let now $\overline{E}$ be equipped with the distance 
$\overline{d}:=d^{\overline{\sigma}}$ constructed from 
$(\Mbar,\overline{g})$ as in Section~\ref{sec:quotientspace}. 
	
Claim (1) is obvious. Claim (2) follows from Corollary \ref{cor:conicequivalent} 
and claims (3)  from choice of boundary collapsing function and the tensor 
$\overline{g}$. Point (4) is obvious since by construction $\Mbar$ is the 
resolution space for both $E$ and $\overline{E}$ which is unique up to 
diffeomorphism by Proposition~\ref{prop:unique-resol}. Thus (5) follows from (4) 
and the construction of $\overline{g}$.
\end{proof}
A conic completion can be considered as a compactification of each end of $E$
by one point, and finding an appropriate bounded metric on this compactification. 
Note that a conic completion can be a smooth manifold, for instance the 
$n$-dimensional sphere with standard distance is a conic completion of $\Rn$, so 
although $\overline{E}_\sing$ contains $E_\sing$, it may differ from $E_\sing\cup 
\infty_E$.

The conic completion is unique up to bi-Lipschitz homeomorphism as the following 
shows.
\begin{proposition}\label{prop:conic-complet}
For any two conic completions  $(\Ebar_p,\dbar_p)$, $p=1,2$, of a globally conic 
singular manifold $(E,d)$ there exists a bi-Lipschitz homeomorphism 
$(\Ebar_1,\dbar_1)\to (\Ebar_2,\dbar_2)$, which is a diffeomorphism $\Ebar_1
\setminus \Ebar_{1,\sing}$ onto $\Ebar_2\setminus \Ebar_{2,\sing}$.
\end{proposition}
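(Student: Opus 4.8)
The plan is to leverage the essential uniqueness of the conic resolution (Proposition~\ref{prop:unique-resol}) to reduce the statement about two completions to a statement about their common resolution space. Given two conic completions $(\Ebar_1,\dbar_1)$ and $(\Ebar_2,\dbar_2)$ of $(E,d)$, each is by construction obtained from the same globally conic singular manifold $E$ via a boundary collapsing map on a global conic resolution $\Mbar=M\cup\dd M_\infty$. By Proposition~\ref{prop:unique-resol}, the resolution space $\Mbar$ together with its globally conic metric is determined up to diffeomorphism and isometry on the interior and on the boundaries (including $\dd M_\infty$ with its associated Riemannian metric on the boundary). So the first step is to fix a common resolution $(\Mbar,\phi_1,\phi_2)$ with resolution maps $\phi_p:\Mbar\to\Ebar_p$, where each $\phi_p$ collapses $\dd M_\infty$ to the finite set $\infty_{E,p}$ of conic ends.

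Next I would produce the homeomorphism itself. On the smooth locus $E^o$, both completions restrict to the same underlying Riemannian manifold $(E^o,g^o)$ (by Proposition-Definition following Proposition~\ref{prop:conic-complet}, item (5), they share the associated boundary metric on $\dd M_\infty$), so the identity map of $E$ extends continuously by sending the conic ends of $\Ebar_1$ to those of $\Ebar_2$ according to the matching of boundary components of $\dd M_\infty$ under the resolution diffeomorphism. Concretely, the map is $\Psi=\phi_2\circ\phi_1^{-1}$ on $\Ebar_1\setminus\infty_{E,1}=E^o\cup(\Ebar_{1,\sing})$, extended to the finite set of ends by the induced bijection $\infty_{E,1}\to\infty_{E,2}$. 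Since $\phi_1,\phi_2$ are diffeomorphisms away from the collapsed boundary, $\Psi$ is a diffeomorphism of $\Ebar_1\setminus\Ebar_{1,\sing}$ onto $\Ebar_2\setminus\Ebar_{2,\sing}$, establishing the smoothness claim.

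The remaining and main task is the bi-Lipschitz estimate near the ends. Here I would use the quotient-distance formula of Section~\ref{sec:quotientspace}: on each completion $\dbar_p$ is built from the conic distance $d_p^c$ of the chosen conic metric $\gbar_p$ on $\Mbar$. By Proposition~\ref{propdiffeobiLip}, any two globally conic metrics on the compact $\Mbar$ yielding the same boundary at infinity are equivalent, hence $\gbar_1$ and $\gbar_2$ are equivalent over $\Mbar\setminus\dd M_\infty$; consequently the identity on $M$ is pseudo-bi-Lipschitz between $d_1^c$ and $d_2^c$. Because the completion distance $d^{\sigma}$ is assembled as a minimum over chains of the underlying conic distances, a uniform bi-Lipschitz comparison of $d_1^c$ and $d_2^c$ passes through the $\min$ and the chain-sums to give a uniform bi-Lipschitz comparison of $\dbar_1\circ(\Psi\times\Psi)$ with $\dbar_2$. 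The hard part will be handling the behaviour at the collapsed ends themselves—verifying that the chain-minimum structure does not degenerate the Lipschitz constant there, i.e. that the inequalities survive passage to the points of $\infty_E$ where several boundary components are identified. The cleanest way around this is to invoke Proposition~\ref{prop:inv-dist} to move to the asymptotically conic picture near $\dd M_\infty$, where the inversion $\iota$ turns the end into an ordinary conic singularity and the already-established comparison of conic distances applies verbatim, so that the global bi-Lipschitz constant is the maximum of the finitely many local constants coming from $E_\sing$, the ends $\infty_E$, and the compact part.
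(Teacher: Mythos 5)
Your proposal follows essentially the same route as the paper's proof: invoke Proposition~\ref{prop:unique-resol} to identify the resolution spaces, define the homeomorphism by conjugating the resolution diffeomorphism with the boundary collapsing maps, obtain the metric comparison from equivalence of conic metrics on the compact resolution (Corollary~\ref{cor:conicequivalent}, equivalently your use of Proposition~\ref{propdiffeobiLip}), and push it through the chain-minimum quotient formula of Section~\ref{sec:quotientspace}. The one divergence is your closing worry and the appeal to Proposition~\ref{prop:inv-dist}, which is superfluous and slightly off target: both completions have genuinely conic (not asymptotically conic) ends, and the uniform equivalence of the two conic pseudo-distances on the compact resolution passes verbatim through every chain sum and the minimum, so no separate treatment of the collapsed points --- and no inversion to an asymptotically conic picture --- is needed.
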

\begin{proof}
For $p=1,2$ let $(\Mbar_p,\dd \Mbar_p)$ be globally conic resolution spaces 
of $(E,d)$ which yield the conic completions $\Ebar_p$ when equipped with 
conic metrics $h_p$ by the boundary collapsing maps $\sigma_p:\Mbar_p\to \Ebar_p$. Let $\psi:(\Mbar_1, \dd \Mbar_1)\to(\Mbar_2,\dd \Mbar_2)$ be the 
diffeomorphism of Proposition \ref{prop:unique-resol}. Since $\Psi$ is an 
isometry outside the boundary, we necessarily get  $\psi(\dd \Mbar_{1,\infty})
= \dd \Mbar_{2,\infty}$ and the mapping $\Psi:= \sigma_2\circ \psi\circ 
\sigma_1$ yields a homeomorphism $(\Ebar_1,\dbar_1)\to (\Ebar_2,\dbar_2)$, 
which is a diffeomorphism outside the singular points. Since $\psi_*(h_1)$ is 
a conic metric over $(\Mbar_2,\dd \Mbar_2)$, by Corollary 
\ref{cor:conicequivalent} it is bi-pseudo-Lipschitz with respect to conic 
distances introduced by the conic metrics $h_p$. By the quotient construction 
of conic completions, the mapping $\Psi$ is bi-Lipschitz. If a point of 
$\Ebar_p\setminus \infty_E$ is smooth, then $\Psi $ extends as a 
diffeomorphism through this point, thus $\Psi$ is a diffeomorphism $\Ebar_1\setminus \Ebar_{1,\sing}$ onto $\Ebar_2\setminus \Ebar_{2,\sing}$.
\end{proof}
In this context we obtain the global version of Proposition \ref{prop:inv-dist} as 
follows, which is a central result of the paper.
\begin{corollary}\label{cor:conic-complet}
Let $(\overline{E},\overline{d})$ be a conic completion of a globally conic 
manifold $(E,d)$. There exist positive constants $A,B$ 
such that 
$$
A\,\overline{d} (\bx,\bx')\;\leq\; r\,r'\,d(\bx,\bx') \;\leq \; 
B\,\overline{d}(\bx,\bx')  , \;\; 
$$
for $\bx, \bx'\in E$ with $r := 
\overline{d}(\bx,\infty_E)$ and $r' := \overline{d}(\bx',\infty_E)$. 
\end{corollary}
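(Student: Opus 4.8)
The plan is to reduce the global statement, Corollary~\ref{cor:conic-complet}, to the purely local computation already established in Proposition~\ref{prop:inv-dist}, using the conic-chart description of the globally conic singular manifold $(E,d)$ together with the conic completion $(\Ebar,\dbar)$. The essential geometric content is that near each conic end the original distance $d$ behaves asymptotically conically while the completion distance $\dbar$ behaves conically, and the conversion factor between the two regimes is exactly $r\,r' = \dbar(\bx,\infty_E)\,\dbar(\bx',\infty_E)$, just as in the conic inversion $\iota^0$. Away from the ends the two distances are equivalent by construction, so the inequality there is immediate with harmless constants.

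First I would fix a conic completion and its global conic resolution $\Mbar = M\cup\dd M_\infty$, so that $d = d^\sigma$ and $\dbar = d^{\overline\sigma}$ arise from the same resolution space $\Mbar$ equipped with a globally conic metric $g$ and a conic metric $\overline g$ respectively (this is guaranteed by the construction in the conic completion Proposition-Definition and by uniqueness of the resolution, Proposition~\ref{prop:unique-resol}). Near the boundary at infinity $\dd M_\infty$, the metric $g$ is asymptotically conic while $\overline g$ is conic with the same associated Riemannian metric on $\dd M_\infty$; by Proposition~\ref{prop:inv-dist} applied on each open infinite cylinder $P^0$ over a component $N$ of $\dd M_\infty$ we obtain, for points $\bx,\bx'$ in that neighbourhood,
$$
A'\,\dbar(\bx,\bx') \;\leq\; r\,r'\,d(\bx,\bx') \;\leq\; B'\,\dbar(\bx,\bx'),
$$
where $r,r'$ are controlled by the boundary defining function $\dbar(-,\infty_E)$ via Lemma~\ref{lem:bound-lev} and Corollary~\ref{cor:bound-lev}, which identify $\dbar(-,\infty_E)$ with the radial coordinate $R=1/r$ up to equivalence. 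This handles the regime where both points lie in a single end neighbourhood.

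Next I would treat the three remaining regimes. On any compact part of $E$ avoiding the ends, $d$ and $\dbar$ are equivalent by claim~(2) of the conic completion Proposition-Definition and $r,r'$ are bounded above and below by positive constants, so the inequality holds after enlarging $A,B$. The genuinely global case is when $\bx$ and $\bx'$ lie in different end neighbourhoods, or when a $\dbar$-geodesic between them must pass through the singular set or between distinct ends: here I would use the chain-of-points definition of $d^\sigma$ and $d^{\overline\sigma}$ from Section~\ref{sec:quotientspace}, splitting any near-optimal chain into an initial conic segment near $\infty_E$, a bounded middle portion, and a terminal conic segment, and then applying the end estimate to the first and last segments and the compact estimate to the middle. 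The main obstacle will be precisely this gluing of the local conic-inversion estimate across the chain decomposition: one must verify that the factor $r\,r'$ from the \emph{endpoints} $\bx,\bx'$ correctly dominates the sum of contributions, which works because on a chain passing through an end the minimal radial coordinate is bounded, so the $r\,r'$ weighting of the far endpoints is the controlling term, exactly as in the case analysis $r\geq 2r'$ versus $r'\leq r\leq 2r'$ used in the proof of Proposition~\ref{prop:inv-dist}. Finally I would take $A=\min$ and $B=\max$ of the finitely many constants produced, one pair per end and one for the compact core, to obtain uniform positive constants valid on all of $E$.
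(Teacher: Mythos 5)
Your proposal is correct and follows essentially the same route as the paper's own proof: both reduce the estimate to Proposition~\ref{prop:inv-dist} on neighbourhoods of the conic ends, dispose of the compact core by noting that $d$, $\dbar$, $r$, $r'$ are all bounded above and below there, and then splice the two regimes (the paper by picking a point $\by$ on the interface $K\cap V$ with $d(\bx,\bx')=d(\bx,\by)+d(\by,\bx')$, you by decomposing near-optimal chains in the quotient construction of Section~\ref{sec:quotientspace}). The only difference is presentational: you work upstairs in the common resolution space $\Mbar$, whereas the paper works directly with the union of conic charts of $\Ebar$ centred at the points of $\infty_E$; the underlying estimates are identical.
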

\begin{proof} 
Let $\vpbar: \ovP = \Cyl(\Nbar,\etabar)\to \Ubar$ be the union of conic charts of 
$\overline{E}$ centred at all the points of $\infty_E$. Let $\gbar^c$ be the conic 
metric over $\ovP$ consistent with the pull-back $\vpbar^*(\dbar|_{\Ubar})$. 
We can assume that $\vp$, the restriction of $\vpbar$ to $\ovP^o$ is an asymptotic 
chart of $E$. Let $g^\infty$ be the asymptotically conic metric with distance function equal to the pull-back $\vp^*(d|_{E\cap \Ubar})$. Therefore 
the desired estimates holds true in $E\cap \Ubar$ by Proposition 
\ref{prop:inv-dist} used with $\dbar^c$ and $d^\infty$, pseudo-distances induced 
respectively from $\gbar^c$ and $g^\infty$, thus holds in $V=\clos_E(\Ubar)$. 
Since $K = \Ebar \setminus \Ubar$ is a compact subset of $E$ containing a 
neighbourhood of 
$E_\sing$, both $d,\dbar$ are bounded over $K$ and thus $r,r,'$ are bounded above 
and below over $K$, therefore the estimates hold over $K$. Let $\bx\in K$ and 
$\bx'\in \Ubar$. Let $\by\in K\cap V$ such that $d(\bx,\bx') = d(\bx,\by) + 
d(\by,\bx')$. Thus we deduce from the estimates in $K$ and in
$V$ that $A\,\dbar(\bx,\bx')\leq r\cdot r'\cdot d(\bx,\bx')$. The 
remaining estimate is obtained similarly.
\end{proof}
\subsection{Globally conic singular sub-manifolds}
The following definition coincides with the standard 
definition, for instance for spherical blowings-up, compare~\cite{CoGrMi3}.
Let $(E,d)$ be a (globally) conic singular 
manifold. The \em strict transform of a closed subset $X$ of $E$ by a conic chart $\varphi_\ba$  \em for $\ba\in E\cup\{\infty\}$ is the subset
of $\Cyl(N_\ba,\eta)$ defined as
$$
\stt_\ba 
X:=\clos_{\Cyl(N_\ba,\eta)}(\varphi_\ba^{-1}(X\setminus\ba)).
$$
\begin{definition}
Let $X$ be a closed subset of a conic singular manifold $E$ such that 
outside of a  finite subset it is a sub-manifold of $E^o$. The subset $X$ is:
\\
(1) \em a conic singular sub-manifold  \em of $E$ if its strict 
transform by every conic chart  is a p-sub-manifold
nearby the boundary of the conic chart domain, 
\\
(2) \em an asymptotically conic singular sub-manifold  \em of the globally conic singular manifold $E$ if its 
strict transform by the asymptotic conic chart is a p-sub-manifold 
nearby the boundary  of the conic chart domain,
\\
(3) \em a globally conic singular sub-manifold  \em of the globally conic singular manifold $E$ if 
it is both conic singular and asymptotically conic singular sub-manifold.
\end{definition}
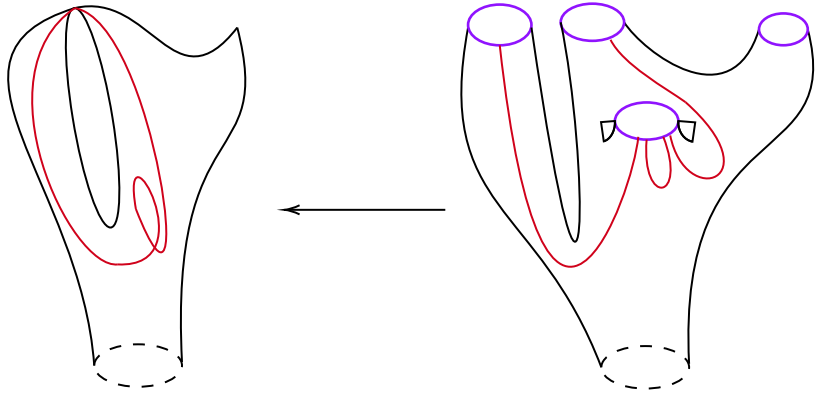
\begin{figure}[h]
\tikzset{every picture/.style={line width=0.75pt}} 
	
\begin{tikzpicture}[x=0.75pt,y=0.75pt,yscale=-0.7,xscale=0.7]
\centering 

\tikzset{every picture/.style={line width=0.75pt}} 

	\draw    (87.95,13.19) .. controls (142.99,0.43) and (159.38,85.91) .. (205.05,27.22) ;
	\draw    (102,270.91) .. controls (92.63,139.5) and (-23.31,47.64) .. (87.95,13.19) ;
	\draw    (165.24,268.36) .. controls (155.87,83.36) and (231.99,131.84) .. (205.05,27.22) ;
	\draw  [dash pattern={on 4.5pt off 4.5pt}] (102,270.91) .. controls (102,262.46) and (116.16,255.6) .. (133.62,255.6) .. controls (151.08,255.6) and (165.24,262.46) .. (165.24,270.91) .. controls (165.24,279.37) and (151.08,286.22) .. (133.62,286.22) .. controls (116.16,286.22) and (102,279.37) .. (102,270.91) -- cycle ;
	\draw   (86.48,92.29) .. controls (79.37,48.61) and (80.02,13.19) .. (87.95,13.19) .. controls (95.87,13.19) and (108.06,48.61) .. (115.17,92.29) .. controls (122.29,135.98) and (121.63,171.4) .. (113.71,171.4) .. controls (105.79,171.4) and (93.6,135.98) .. (86.48,92.29) -- cycle ;
	\draw    (483.77,23.4) .. controls (497.82,45.09) and (561.06,93.57) .. (580.97,28.5) ;
	\draw    (467.37,272.19) .. controls (421.7,152.26) and (346.75,159.91) .. (371.35,27.22) ;
	\draw    (530.61,269.64) .. controls (521.24,84.64) and (643.04,133.12) .. (616.1,28.5) ;
	\draw  [dash pattern={on 4.5pt off 4.5pt}] (467.37,272.19) .. controls (467.37,263.73) and (481.53,256.88) .. (498.99,256.88) .. controls (516.46,256.88) and (530.61,263.73) .. (530.61,272.19) .. controls (530.61,280.65) and (516.46,287.5) .. (498.99,287.5) .. controls (481.53,287.5) and (467.37,280.65) .. (467.37,272.19) -- cycle ;
	\draw  [color={rgb, 255:red, 144; green, 19; blue, 254 }  ,draw opacity=1 ][line width=1]  (371.35,25.32) .. controls (371.35,17.21) and (381.57,10.64) .. (394.18,10.64) .. controls (406.79,10.64) and (417.02,17.21) .. (417.02,25.32) .. controls (417.02,33.43) and (406.79,40) .. (394.18,40) .. controls (381.57,40) and (371.35,33.43) .. (371.35,25.32) -- cycle ;
	\draw  [color={rgb, 255:red, 144; green, 19; blue, 254 }  ,draw opacity=1 ][line width=1]  (438.1,23.4) .. controls (438.1,16) and (448.32,10) .. (460.93,10) .. controls (473.55,10) and (483.77,16) .. (483.77,23.4) .. controls (483.77,30.8) and (473.55,36.79) .. (460.93,36.79) .. controls (448.32,36.79) and (438.1,30.8) .. (438.1,23.4) -- cycle ;
	\draw  [color={rgb, 255:red, 144; green, 19; blue, 254 }  ,draw opacity=1 ][line width=1]  (580.97,28.5) .. controls (580.97,22.16) and (588.83,17.02) .. (598.54,17.02) .. controls (608.24,17.02) and (616.1,22.16) .. (616.1,28.5) .. controls (616.1,34.84) and (608.24,39.98) .. (598.54,39.98) .. controls (588.83,39.98) and (580.97,34.84) .. (580.97,28.5) -- cycle ;
	\draw    (417.02,27.22) .. controls (460.35,324.5) and (458.01,117.81) .. (438.1,23.4) ;
	\draw    (354.95,158.64) -- (241.02,158.64) ;
	\draw [shift={(239.02,158.64)}, rotate = 360] [color={rgb, 255:red, 0; green, 0; blue, 0 }  ][line width=0.75]    (10.93,-3.29) .. controls (6.95,-1.4) and (3.31,-0.3) .. (0,0) .. controls (3.31,0.3) and (6.95,1.4) .. (10.93,3.29)   ;
	\draw  [color={rgb, 255:red, 144; green, 19; blue, 254 }  ,draw opacity=1 ][line width=1]  (477.16,94.6) .. controls (477.16,87.2) and (487.39,81.21) .. (500,81.21) .. controls (512.61,81.21) and (522.84,87.2) .. (522.84,94.6) .. controls (522.84,102) and (512.61,108) .. (500,108) .. controls (487.39,108) and (477.16,102) .. (477.16,94.6) -- cycle ;
	\draw  [draw opacity=0] (533.02,109.49) .. controls (527.23,108.39) and (522.81,102.58) .. (522.81,95.58) .. controls (522.81,95.25) and (522.82,94.92) .. (522.84,94.6) -- (535,95.58) -- cycle ; \draw   (533.02,109.49) .. controls (527.23,108.39) and (522.81,102.58) .. (522.81,95.58) .. controls (522.81,95.25) and (522.82,94.92) .. (522.84,94.6) ;  
	\draw  [draw opacity=0] (469.13,109.23) .. controls (473.72,107.94) and (477.18,102.24) .. (477.18,95.4) .. controls (477.18,95.14) and (477.17,94.87) .. (477.16,94.6) -- (467.15,95.4) -- cycle ; \draw   (469.13,109.23) .. controls (473.72,107.94) and (477.18,102.24) .. (477.18,95.4) .. controls (477.18,95.14) and (477.17,94.87) .. (477.16,94.6) ;  
	\draw [color={rgb, 255:red, 208; green, 2; blue, 27 }  ,draw opacity=1 ]   (394.18,40) .. controls (426,327) and (490,148) .. (494,106) ;
	\draw [color={rgb, 255:red, 208; green, 2; blue, 27 }  ,draw opacity=1 ]   (500,108) .. controls (496,157) and (530,152) .. (512,106) ;
	\draw [color={rgb, 255:red, 208; green, 2; blue, 27 }  ,draw opacity=1 ]   (87.95,13.19) .. controls (18,61) and (85,202) .. (119,198) ;
	\draw [color={rgb, 255:red, 208; green, 2; blue, 27 }  ,draw opacity=1 ]   (132,158) .. controls (178,276) and (146,23) .. (87.95,13.19) ;
	\draw [color={rgb, 255:red, 208; green, 2; blue, 27 }  ,draw opacity=1 ]   (132,158) .. controls (120,88) and (184,201) .. (119,198) ;
	\draw [color={rgb, 255:red, 208; green, 2; blue, 27 }  ,draw opacity=1 ]   (528,81) .. controls (591,137) and (528,159) .. (517,105) ;
	\draw [color={rgb, 255:red, 208; green, 2; blue, 27 }  ,draw opacity=1 ]   (474,36) .. controls (483,54) and (514,71) .. (528,81) ;

\end{tikzpicture}
\caption{A conic singular sub-manifold and its smooth model as a p-sub-manifold.}
\label{fig:globconic}
\end{figure}
\begin{proposition}\label{prop:csubisc}
A (globally) conic singular sub-manifold  equipped with the 
inner metric is a (globally) conic singular manifold.
\end{proposition}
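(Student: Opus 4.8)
The plan is to verify the conic-charts characterization of Proposition-Definition~\ref{propdef:coniccharts} for the pair $(X,d_\inn^X)$, obtaining every chart by restricting a conic chart of the ambient space $E$ to the relevant strict transform. Write $F\subset X$ for the finite set outside of which $X$ is a smooth sub-manifold of $E^o$, so that $X^o:=X\setminus F$ is a smooth manifold carrying the restricted tensor $g^o|_{X^o}$; since $X\cap E_\sing\subseteq F$, the set $F$ will be the singular locus of $X$ as a conic singular manifold. By the very definition of the inner pseudo-distance (Definition~\ref{def:in-out-dist}(ii)) lengths of arcs lying in $X^o$ are measured with $g^o|_{X^o}$, so on $X^o$ the distance $d_\inn^X$ agrees locally with the Riemannian length distance of $(X^o,g^o|_{X^o})$; in particular $(X,d_\inn^X)$ is a length space whose smooth part carries the tensor $g^o|_{X^o}$.

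Next I would produce a conic chart at each $\ba\in X$. If $\ba\in X^o$ I simply spherically blow up $(X^o,g^o|_{X^o})$ at $\ba$, exactly as in the proof of Proposition-Definition~\ref{propdef:coniccharts}. If $\ba\in X\cap E_\sing$, I take the conic chart $\varphi_\ba:\Cyl(N_\ba,\eta)\to U_\ba$ of $E$ at $\ba$; by the definition of conic singular sub-manifold the strict transform $S:=\stt_\ba X$ is a p-sub-manifold of $\Cyl(N_\ba,\eta)$ near its boundary, so by the remark after Definition~\ref{def:conic-manif} the triple $(S,\dd S,(\varphi_\ba^*g^o)|_S)$ is a manifold with a conic metric. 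If $\ba\in F\cap E^o$ I use instead the spherical blow-up chart $\phi_U\circ\bbl_{\ba_U}$ of $E$ at $\ba$, whose strict transform is again a p-sub-manifold carrying the induced conic metric. In every case $\varphi_\ba|_S$ satisfies conditions (1)--(3) of Proposition-Definition~\ref{propdef:coniccharts}, because $\varphi_\ba$ is a diffeomorphism off the boundary and $S$ is a p-sub-manifold, while condition (4) holds because the pull-back of $g^o$ by $\varphi_\ba|_S$ equals $(\varphi_\ba^*g^o)|_S$, the conic metric on $S$ just produced.

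The step requiring the most care, and which I expect to be the main obstacle, is the \emph{metric} compatibility: one must check that these charts are isometric for the inner distance $d_\inn^X$, and not merely diffeomorphisms pulling $g^o$ back to a conic tensor. Since $\varphi_\ba$ is a Riemannian isometry between $\Cyl(N_\ba,\eta)^o$ and $U_\ba\setminus\ba$, its restriction $\varphi_\ba|_{S\setminus\dd S}$ is a Riemannian isometry onto $(X\cap U_\ba)\setminus\ba$, hence transports the inner conic distance of $S$ to the distance obtained from arcs of $X$ that remain in $U_\ba$. What must still be shown is that, for points of $X$ close to $\ba$, this localized distance equals $d_\inn^X$, i.e. that arcs realizing $d_\inn^X$ may be taken inside $U_\ba$. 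This is exactly where the conic structure enters: through the chart $X\cap U_\ba$ is a metric cone over the compact manifold $\dd S$, and the lower bound of Proposition~\ref{prop:conic-dist} applied on $S$ shows that an arc leaving $U_\ba$ accumulates length bounded below away from $0$, whereas the inner distance of two points at small radius is itself small; hence competing arcs that leave $U_\ba$ are strictly longer and the localization holds. Granting this, $\varphi_\ba|_S$ is an isometric conic chart, and the same localization yields completeness of $(X,d_\inn^X)$: an inner-Cauchy sequence is outer-Cauchy, so it converges in the closed set $X\subseteq E$, and its inner tail lies in a single chart where the distance is the complete conic distance of a cone over a compact base.

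Finally, for the globally conic case I would adjoin the asymptotic conic chart. The asymptotically conic singular sub-manifold hypothesis makes the strict transform of $X$ by the asymptotic chart $\varphi_\infty$ of $E$ a p-sub-manifold near the boundary at infinity; restricting $\varphi_\infty$ to it gives the required chart once one checks that an ac-metric restricts to an ac-metric on a p-sub-manifold. This last point is immediate from Definition~\ref{def:asym-con-met}: the radial coordinate $r$ restricts to a boundary defining function of $S$, so $r^4(\varphi_\infty^*g^\infty)|_S=(\varphi_\infty^*g^0)|_S$ is the restriction of a model conic metric and hence again conic on $S$, which is the defining property of an ac-metric. The localization at infinity is identical, using Proposition~\ref{prop:asym-conic-dist} in place of Proposition~\ref{prop:conic-dist}. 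Having exhibited conic charts at every point and, in the globally conic case, an asymptotic conic chart, Proposition-Definition~\ref{propdef:coniccharts} yields that $(X,d_\inn^X)$ is a (globally) conic singular manifold.
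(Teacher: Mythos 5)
Your proposal is correct and takes essentially the same route as the paper's own proof: restrict the ambient (asymptotic) conic charts to the strict transforms, use the fact that a conic metric restricts to a conic metric on a p-sub-manifold, and note that the sub-manifold with its inner metric is a complete length space. The paper compresses this into three sentences, handling the metric-compatibility step (your localization argument via Proposition~\ref{prop:conic-dist}) by instead invoking the quotient construction of Section~\ref{sec:quotientspace}, so your more explicit verification simply fills in the same outline.
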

\begin{proof}
	The restriction of a(n asymptotic) conic chart to a p-sub-manifold 
yields a(n asymptotic) conic chart onto the (globally) conic singular sub-manifold and the (globally) conic metric restricted to a p-sub-manifold is a (globally) conic metric on the p-sub-manifold. 
The (globally) conic singular sub-manifold with its inner metric 
is also a length space, where we can point out the length-minimizing curves using for instance the  quotient construction of Section~\ref{sec:quotientspace}.
\end{proof}
\begin{definition}\label{def:con-compl}
Let $X$ be a closed subset of the globally conic singular manifold $(E,d)$. The 
\em conic completion in $\overline{E}$ \em of the subset $X$ is simply its 
closure  in the conic completion $\overline{E}$.
\end{definition}
Note that the conic completion of a globally singular sub-manifold $X$ in $E$ is 
due to Proposition~\ref{prop:csubisc} isometric to a conic completion of $X$, 
whenever $X$ is considered with the inner metric. Therefore, conic completion 
gives a convenient  way to characterize a globally conic singular manifold
as follows.
\begin{proposition}\label{prop:acs-submanif-complet}
Let $(E,d)$ be a globally conic singular manifold and let $(\overline{E},\overline{d})$ be its conic completion.
The closed subset $X$  is a globally conic singular sub-manifold of $E$ if and only if 
its conic completion $\overline{X}$ in the conic completion $\overline{E}$ is a conic singular sub-manifold. 
\end{proposition}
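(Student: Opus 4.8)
The plan is to check the two defining properties of a conic singular sub-manifold for $\overline{X}$ in $\overline{E}$ by decomposing $\overline{E}=E\sqcup\infty_E$ into its finite part $E$ and its finite set of conic ends $\infty_E$: I would match the condition at the points of $E$ with the conic singular sub-manifold condition for $X$, and the condition at the points of $\infty_E$ with the asymptotically conic singular sub-manifold condition for $X$. As a preliminary reduction I would note that, $X$ being closed in $E$, one has $\overline{X}\cap E=X$, so $\overline{X}$ is obtained from $X$ by adjoining at most finitely many points of $\infty_E$; in particular $\overline{X}$ is again a sub-manifold of $\overline{E}^o$ outside a finite set, so the standing hypothesis in the definition of conic singular sub-manifold is satisfied. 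Throughout I would work with model conic charts, which is legitimate by the model-chart characterisation of conic singular manifolds.

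The finite part is essentially formal. By property~(3) of the conic completion (Section~\ref{section:CRofCSM}) the distances $d$ and $\overline{d}$ coincide on a neighbourhood of $E_\sing$, and since $\overline{E}$ shares the resolution space $\Mbar$ with $E$ (Proposition~\ref{prop:unique-resol}) the smooth structures on $E^o$ agree; hence at each point $\ba\in E$ a model conic chart of $\overline{E}$ may be taken equal to one of $E$ (at a singular point using $d=\overline{d}$, at a smooth point using the metric-independence of the spherical blowing-up). As $\overline{X}$ agrees with $X$ near such $\ba$, the strict transforms coincide, $\stt_\ba\overline{X}=\stt_\ba X$. Therefore ``$\overline{X}$ has a p-sub-manifold strict transform at every point of $E$'' is exactly the statement that $X$ is a conic singular sub-manifold of $E$.

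The heart of the proof is the behaviour at the ends, and here I would invoke the conic inversion of Section~\ref{section:conic-inv}. Fix $p\in\infty_E$ with link $N_p$, write $\vp_\infty$ for the restriction of the asymptotic conic chart of $E$ over the boundary component $N_p\times\{0\}$, and let $\vpbar_p\colon\Cyl(N_p,\etabar)\to\Ubar_p$ be a model conic chart of $\overline{E}$ at $p$; the two links agree by Proposition~\ref{prop:conicchartsunique}. Putting both metrics in model form via Theorem~\ref{thm:MelWun}, the defining identity $r^{-4}g^0=(\iota^0)^*g^\infty$ of the conic inversion shows that on the common punctured neighbourhood of the end that the two charts parametrise, $\vp_\infty$ and $\vpbar_p$ differ by the inversion $\iota^0\colon(\by,r)\mapsto(\by,1/r)$. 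The decisive structural fact is that $\iota^0$ extends to a \emph{diffeomorphism} $\iota$ of the closed doubled cylinder $\ovP=N_p\times[0,\infty]$ interchanging the two boundary faces, namely the ``infinity'' face of $\vp_\infty$ and the face collapsing to $p$ of $\vpbar_p$. Consequently $\iota$ carries the strict transform $\stt_\infty X$ onto $\stt_p\overline{X}$ as subsets of a manifold with boundary, and since a diffeomorphism of manifolds with boundary preserves transversality to the boundary, one of these is a p-sub-manifold near its boundary if and only if the other is. Letting $p$ range over $\infty_E$ identifies ``$\overline{X}$ has a p-sub-manifold strict transform at every point of $\infty_E$'' with the assertion that $X$ is an asymptotically conic singular sub-manifold of $E$. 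Combining the two parts yields the equivalence.

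The main obstacle I anticipate is the smooth matching in the infinite part: one must verify that $\iota$ identifies the two charts by a map that is regular \emph{across} the boundary, so that the closures defining the strict transforms correspond and not merely their interiors. This rests on the model-form reduction and on the smoothness of $\iota$ on $\ovP$ recorded in Section~\ref{section:conic-inv}. A minor point to treat separately is that some $p\in\infty_E$ may be a smooth point of $\overline{E}$, as for the round sphere compactifying $\Rn$; then the conic chart at $p$ is a spherical blowing-up chart, but since such a chart is a model conic chart the inversion argument applies verbatim.
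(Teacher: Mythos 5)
Your overall strategy---splitting $\overline{E}=E\sqcup\infty_E$, matching the strict-transform condition at points of $E$ with conic singularity of $X$ and at points of $\infty_E$ with asymptotic conic singularity of $X$---is the natural one, and your treatment of the finite part is sound. (For the record, the paper states this proposition without a written proof, as a consequence of its completion construction, so the comparison here is with the mechanism the paper's construction supplies.) The genuine error is at the ends: under the paper's conventions the transition between the asymptotic conic chart $\vp_\infty$ of $E$ and the conic chart $\vpbar_p$ of $\overline{E}$ at $p$ is \emph{not} the conic inversion. In Definition~\ref{def:asym-con-met} and Proposition-Definition~\ref{propdef:coniccharts} the radial coordinate $r$ of the asymptotic chart already tends to $0$ at the end---that is precisely why the ac-metric carries the conformal factor $1/r^4$; the inversion has been absorbed into the definition---and the radial coordinate $\rho$ of $\vpbar_p$ also tends to $0$ at $p$. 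One checks (from Corollary~\ref{cor:conic-complet} with one point fixed, or in the stereographic example $\Rn\subset\bS^n$, where $\rho\approx 2r$) that $\rho$ and $r$ are comparable, so the transition is a collar-type diffeomorphism preserving the boundary face, not $(\by,r)\mapsto(\by,1/r)$. If the transition really were the inversion, your picture would be incoherent: the face $\{r=0\}$ carrying $\stt_\infty X$ would be glued to the face $\{\rho=\infty\}$, i.e.\ approaching the end in one chart would mean receding from $p$ in the other, and the two strict transforms, sitting at opposite faces of $\ovP$, would have no reason to correspond.

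What actually needs proving---and what your own ``main obstacle'' paragraph correctly flags but the smoothness of $\iota$ on $\ovP$ cannot deliver, since $\iota$ is not the transition---is that $\vpbar_p^{-1}\circ\vp_\infty$ extends smoothly \emph{across} the common boundary face. The natural source of this is property (4) of the conic completion: $E$ and $\overline{E}$ share the resolution space $\Mbar$, and both $\vp_\infty$ and $\vpbar_p$ factor as the common quotient map $\overline{\sigma}$ (which restricted to $M$ is the resolution of $E$) composed with collar neighbourhood diffeomorphisms of the \emph{same} boundary component of $\dd \Mbar_\infty$, normalizing respectively the ac-metric $g$ and the conic metric $\overline{g}$ to model form. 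Hence the transition is a composition of collar diffeomorphisms of $\Mbar$, smooth up to the boundary and mapping boundary to boundary; this diffeomorphism carries $\stt_\infty X$ onto $\stt_p\overline{X}$ and preserves the p-sub-manifold property, completing the argument at the ends. The conic inversion of Section~\ref{section:conic-inv} is what relates either of these compactified charts to the cone-at-infinity picture of $(E,d)$, in which the radius tends to $\infty$; it plays no role in comparing the two compactified charts with each other.
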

%
%
%
%
%
%
%
%
%
%
%
%
%
%
%
%
%
%
%
%
%
%
%
%
%
%
%
%
%
%
%
%
%
%
\section{Lipschitz geometry on globally conic singular 
manifolds}\label{section:main}
By definition, a   manifold which is Riemannian,  conic singular 
or globally conic singular is LNE in itself as a length space. A 
smooth sub-manifold of a Riemannian manifold is always locally 
LNE, yet it may fail to be LNE, see \cite{CoGrMi3}. In this section we show that conic singular sub-manifolds are locally LNE and globally conic singular manifolds are LNE in an ambient globally conic singular manifold, generalizing results of \cite{CoGrMi3}.

\subsection{Preliminary remarks on LNE subsets of conic singular manifolds}
The next 
result generalizes \cite[Proposition 
2.1]{BiMo},\cite[Proposition 2.4]{KePeRu} and \cite[Lemma 
2.6]{CoGrMi1} in the context of conic singular manifolds.
%
%
\begin{proposition}\label{prop:compact-ic-LNE}
A connected compact subset of the conic singular manifold 
$(E,d)$ is LNE if and only if it is locally LNE. In particular, 
any connected compact $C^1$ embedded sub-manifold of $E^o$, 
possibly with boundary, is LNE.
\end{proposition}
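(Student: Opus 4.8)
The plan is to prove the nontrivial implication \emph{locally LNE $\Rightarrow$ LNE}; the converse is immediate, since $E$ is itself a neighbourhood of each of its points and $X\cap E=X$, so taking $U=E$ in Definition~\ref{defLNEset}(ii) shows that any LNE subset is locally LNE. For the forward direction, recall that the inequality $d_X\le d_\inn^X$ holds for free (any arc in $X$ joining two points has length at least their outer distance), so being LNE amounts to producing a single constant $L$ with $d_\inn^X(\bx,\bx')\le L\,d_X(\bx,\bx')$ for all $\bx,\bx'\in X$. I would argue by contradiction: if no such $L$ exists, there are points $\bx_n,\bx_n'\in X$ with $d_\inn^X(\bx_n,\bx_n')/d_X(\bx_n,\bx_n')\to\infty$, and I would extract, using compactness of $X$, convergent subsequences $\bx_n\to\bx$ and $\bx_n'\to\bx'$.

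Before running the dichotomy I would first record that the inner distance is a genuine finite distance of bounded diameter on $X$. Local LNE gives, for each $\ba\in X$, a neighbourhood $U_\ba$ and a constant $L_\ba$ with $d_\inn^X(\by,\by')\le d_\inn^{X\cap U_\ba}(\by,\by')\le L_\ba\,d(\by,\by')$ for $\by,\by'\in X\cap U_\ba$ (arcs in $X\cap U_\ba$ are arcs in $X$). Hence the set of pairs at finite inner distance is open in $X\times X$, and being also closed, connectedness of $X$ forces $d_\inn^X$ to be finite everywhere. The same local estimate shows $d_\inn^X$ is continuous (it is $d$-controlled near the diagonal), so by compactness its diameter $D:=\sup_{X\times X} d_\inn^X$ is finite.

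The contradiction then splits into two cases according to the limits above. If $\bx\ne\bx'$, then $d_X(\bx_n,\bx_n')\to d(\bx,\bx')>0$ while $d_\inn^X(\bx_n,\bx_n')\le D<\infty$, so the ratio stays bounded, a contradiction. If $\bx=\bx'=:\ba$, then $\bx_n,\bx_n'$ eventually lie in the neighbourhood $U_\ba$ on which $X\cap U_\ba$ is LNE, whence $d_\inn^X(\bx_n,\bx_n')\le L_\ba\,d(\bx_n,\bx_n')$ and the ratio is bounded by $L_\ba$, again a contradiction. Thus the desired constant exists and $X$ is LNE. The delicate point, and the only place connectedness is really used, is the finiteness step: without it the ``$\bx\ne\bx'$'' case cannot be excluded, as the inner distance could a priori be infinite between distinct accumulation points; the collapsing case ``$\bx=\bx'$'' is exactly where the hypothesis of local LNE is consumed.

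Finally, for the \emph{in particular} statement, I would note that a $C^1$ embedded sub-manifold $X$ of $E^o$ is locally LNE: every point of $X$ lies in the smooth part, near which $E$ is locally isometric to a Riemannian manifold by Proposition~\ref{prop:rksonconicsing}, and a $C^1$ sub-manifold of a Riemannian manifold is locally a Lipschitz graph, hence locally LNE. Since $X$ is compact its closure in $E$ is $X\subset E^o$, so local LNE holds at every point of the closure; being moreover connected and compact, the first part applies and $X$ is LNE.
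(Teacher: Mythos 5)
Your proof is correct and is, in substance, the same argument the paper relies on: the paper's proof is a one-line citation of \cite[Proposition 2.4]{KePeRu}, whose purely metric argument is exactly what you reconstruct (local LNE constants handle pairs collapsing to the diagonal, while connectedness plus compactness gives a finite inner diameter that handles separated pairs). You additionally spell out the finiteness-of-inner-distance step and the local-Lipschitz-graph justification of the \emph{in particular} clause, both of which the paper leaves implicit.
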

\begin{proof}
The proof of \cite[Proposition 2.4]{KePeRu} is metric in nature, 
thus applies in this setting.
\end{proof}
The LNE property is hereditary in the 
sense of Proposition~\ref{propLNEHereditary} below. Equivalence 
of induced metrics on an LNE subset  shows 
that LNE subsets preserve ambient Lipschitz properties, for 
example H\"{o}lder exponents.
\begin{proposition}[Proposition 1.9, \cite{CoGrMi3}]\label{propLNEHereditary}
Let $X$ be a subset of a conic singular manifold $(E,d)$ which 
is locally LNE at a point $\bx$. A subset $Y$ of $X$ is locally 
LNE at $\bx$ in $(X,d_{\inn}^X)$ if and only if it is locally 
LNE at $\bx$ in $(E, d)$.
\end{proposition}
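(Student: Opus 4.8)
The plan is to recognise that the four \emph{a priori} distinct metric structures in play are really only three, and then to invoke the local LNE property of $X$ to collapse two of them. Write $d_Y$ and $d_\inn^Y$ for the outer and inner distances of $Y$ computed inside the ambient space $(E,d)$, and write $(d_\inn^X)_Y$ and $(d_\inn^X)_\inn^Y$ for the outer and inner distances of $Y$ computed inside the metric space $(X,d_\inn^X)$. Since $d\leq d_\inn^X$ holds on $X$, restricting to $Y$ yields the chain $d_Y\leq (d_\inn^X)_Y\leq (d_\inn^X)_\inn^Y$, together with $d_Y\leq d_\inn^Y$.

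The first key step is to show that length is intrinsic, so that the inner metric of $Y$ is unambiguous: $d_\inn^Y=(d_\inn^X)_\inn^Y$. Indeed, for any arc $\bc$ contained in $X$ the $d$-length and the $d_\inn^X$-length coincide. The inequality $\ell_d(\bc)\leq \ell_{d_\inn^X}(\bc)$ is immediate from $d\leq d_\inn^X$. For the reverse, given any partition one has $d_\inn^X(\bc(t_{i-1}),\bc(t_i))\leq \ell_d(\bc|_{[t_{i-1},t_i]})$, because the subarc $\bc|_{[t_{i-1},t_i]}$ lies in $X$ and therefore competes in the infimum defining $d_\inn^X$; summing over $i$ and using additivity of length gives $\sum_i d_\inn^X(\bc(t_{i-1}),\bc(t_i))\leq \ell_d(\bc)$, and passing to the supremum over partitions yields $\ell_{d_\inn^X}(\bc)\leq\ell_d(\bc)$. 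Taking the infimum over arcs of $Y$ joining two given points then gives $d_\inn^Y=(d_\inn^X)_\inn^Y$, so the three surviving metrics satisfy $d_Y\leq (d_\inn^X)_Y\leq d_\inn^Y$ throughout.

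The second step uses the hypothesis. Fix a neighbourhood $U$ of $\bx$ in $E$ on which $X$ is LNE, so that $d_\inn^X\leq L\,d$ on $X\cap U$ for some constant $L\geq 1$; restricting to pairs of points of $Y$ gives $(d_\inn^X)_Y\leq L\,d_Y$ on $Y\cap U$, so $d_Y$ and $(d_\inn^X)_Y$ are equivalent there. Since the identity map $(X,d_\inn^X)\to (X,d_X)$ is $1$-Lipschitz, hence continuous, the set $X\cap U$ is also a neighbourhood of $\bx$ in $(X,d_\inn^X)$, and it therefore suffices to compare the metrics on $Y\cap U$. I would then conclude by chasing the inequalities: if $Y$ is LNE at $\bx$ in $(E,d)$, so $d_\inn^Y\leq C\,d_Y$, then $d_\inn^Y\leq C\,d_Y\leq C\,(d_\inn^X)_Y$ shows $Y$ is LNE at $\bx$ in $(X,d_\inn^X)$; conversely, if $d_\inn^Y\leq C'\,(d_\inn^X)_Y$, then $(d_\inn^X)_Y\leq L\,d_Y$ gives $d_\inn^Y\leq C'L\,d_Y$, so $Y$ is LNE at $\bx$ in $(E,d)$. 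The only delicate point is the intrinsicness of length in the first step: one must check that replacing the ambient distance $d$ by the induced length distance $d_\inn^X$ does not alter the length of arcs lying in $Y$, which is exactly what makes the inner metric of $Y$ independent of whether it is measured in $(E,d)$ or in $(X,d_\inn^X)$. Everything else is a direct comparison of the three resulting metrics.
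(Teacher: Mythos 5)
The paper offers no proof of this proposition: it is quoted from \cite{CoGrMi3} (Proposition~1.9), so your argument can only be measured against the standard one, and in substance it \emph{is} that argument. Your key step --- that the $d$-length and the $d_\inn^X$-length of an arc lying in $X$ coincide, hence the inner distance of $Y$ is the same whether computed in $(E,d)$ or in $(X,d_\inn^X)$ --- is correct and well justified, and combined with the local equivalence $d_Y\le (d_\inn^X)_Y\le L\,d_Y$ on $Y\cap U$ it reduces the proposition to your inequality chase. Two small points to tighten: the hypothesis literally gives $d_\inn^{X\cap U}\le L\,d$ on $X\cap U$, which implies your $d_\inn^X\le L\,d$ there because $d_\inn^X\le d_\inn^{X\cap U}$; and in the intrinsic-length lemma one should observe that a $d$-continuous arc in $X$ of finite $d$-length is automatically $d_\inn^X$-continuous (by $d_\inn^X(\bc(s),\bc(t))\le \ell_d(\bc|_{[s,t]})$), so the two infima indeed range over the same class of arcs.

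The one place where you are genuinely too quick is the sentence ``it therefore suffices to compare the metrics on $Y\cap U$''. With Definition~\ref{defLNEset}, local LNE is a property of the truncated sets $Y\cap W$, and inner distances \emph{increase} when a set is shrunk, so LNE does not pass to subsets and a witness neighbourhood cannot be replaced by a smaller one for free. In the direction (LNE in $(E,d)$) $\Rightarrow$ (LNE in $(X,d_\inn^X)$) this costs nothing: if $Y\cap W$ is LNE in $(E,d)$ with $W$ open in $E$, the \emph{same} set $Y\cap W=Y\cap(X\cap W)$ serves as witness in $(X,d_\inn^X)$, since $X\cap W$ is a neighbourhood of $\bx$ there by the continuity you established, and your chain applies verbatim to it; note that this direction never uses the hypothesis on $X$. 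In the converse direction the witness is $Y\cap V$ with $V$ a neighbourhood of $\bx$ in $(X,d_\inn^X)$; since $Y\subseteq X$, one can realize $Y\cap V=Y\cap W$ for the $E$-neighbourhood $W:=V\cup(W_0\setminus X)$, where $W_0$ is $E$-open with $X\cap W_0\subseteq V$ (such $W_0$ exists because $d\le d_\inn^X\le L\,d$ on $X\cap U$ makes the two topologies agree near $\bx$). However, your estimate $(d_\inn^X)_Y\le L\,d_Y$ is available only on $U$, so the chase closes only once $Y\cap V\subseteq U$ is arranged --- and precisely because LNE does not survive shrinking, this is not automatic from the existence of \emph{some} witness $V$. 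Spelling out this reduction (for instance by confining the short connecting arcs, whose length is controlled by $C'L\,d$, to a fixed slightly larger truncation) is the one step a complete proof must add; the rest of your argument is correct.
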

The next result is the following analogue of a cone type 
result in the Euclidean case.
\begin{proposition}\label{prop:cyl-con-LNE}
Let $(E,d)$ be a conic singular manifold and $\ba\in E_\sing$.
Let $\phi:\Cyl(N,\eta) \to U$ be a model conic chart on $E$
centred at $\ba$, let $g^c$ be the model conic metric on 
$\Cyl(N,\eta)$ and  $g_N$ be the Riemannian metric over
$N$ associated with $g^c$. If $Y$ is a LNE subset of $(N,g^\dd(0))$
then there exists a positive height $\eta_Y<\eta$ such that
each image $\phi(Y\times[0,\ve])$ is LNE in $(E,d)$ for any 
$0< \ve \leq \eta_Y$.  
\end{proposition}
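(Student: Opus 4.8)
The plan is to transport the question into the model cylinder $\Cyl(N,\eta)$ and reduce it to the elementary comparison of Proposition~\ref{prop:conic-dist}. Set $Z:=Y\times[0,\ve]\subset\Cyl(N,\eta)$, so $\phi(Z)$ is the set under scrutiny. Because $\phi$ restricts to a Riemannian isometry of $\Cyl(N,\eta)^o$ onto $U\setminus\ba$ (Definition~\ref{def:conic-sing-manif}) and sends $N\times\{0\}$ to the single apex $\ba$, the outer distance on $\phi(Z)$ induced from $(E,d)$ matches the conic pseudo-distance $d^c$ on $Z$, and lengths of arcs in $\phi(Z)$ equal the $g^c$-lengths of their lifts; hence it suffices to prove that on $Z$ the inner and outer $d^c$-structures are equivalent. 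Since LNE is a bi-Lipschitz invariant of the metric (see Section~\ref{section:pseudo-metric}), Proposition-Definition~\ref{propdef:associated} and Corollary~\ref{cor:conicequivalent} let me replace $g^c$ by its associated simple conic metric $\rd r^2+r^2 g_N$ on some cylinder $\Cyl(N,\eta')$; I fix $\eta_Y:=\eta'$, so the reduction is valid for every $\ve\le\eta_Y$ with constants independent of $\ve$.

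By Proposition~\ref{prop:conic-dist} applied to $\rd r^2+r^2 g_N$ there are constants $A^c,B^c>0$ such that
$$
A^c\big(|r-r'|+\min(r,r')\,d_N(\by,\by')\big)\;\le\; d^c(\bx,\bx')\;\le\; B^c\big(|r-r'|+\min(r,r')\,d_N(\by,\by')\big)
$$
for all $\bx=(\by,r)$ and $\bx'=(\by',r')$ in $\Cyl(N,\eta_Y)$; in particular the lower bound controls the outer distance $d^c$ restricted to $Z$. As the inner distance always dominates the outer one, the entire content of the statement lies in the reverse bound $d_\inn^Z\le C\, d^c$.

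To obtain it I build an explicit arc in $Z$ joining $\bx=(\by,r)$ to $\bx'=(\by',r')$, assuming $r'\le r$. Let $L$ be an LNE constant of $Y$ in $(N,g_N)$; then there is an arc $\gm$ inside $Y$ from $\by$ to $\by'$ whose $g_N$-length does not exceed $L\,d_N(\by,\by')$. Concatenating the radial segment $s\mapsto(\by,s)$, with $s$ running from $r$ down to $r'$, with the horizontal arc $t\mapsto(\gm(t),r')$ at the fixed radius $r'=\min(r,r')$ gives an arc that stays in $Z$, because $\gm$ stays in $Y$ and all radii lie in $[0,\ve]$. For the simple conic metric its $g^c$-length equals
$$
|r-r'|+r'\,\lgth_{g_N}(\gm)\;\le\;|r-r'|+\min(r,r')\,L\,d_N(\by,\by')\;\le\;(1+L)\big(|r-r'|+\min(r,r')\,d_N(\by,\by')\big).
$$
Combining with the lower bound of Proposition~\ref{prop:conic-dist} yields $d_\inn^Z(\bx,\bx')\le \frac{1+L}{A^c}\,d^c(\bx,\bx')$ with a constant independent of $\ve\le\eta_Y$. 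Together with $d^c\le d_\inn^Z$ this proves that the two structures on $Z$, and hence on $\phi(Z)$, are equivalent, i.e. $\phi(Z)$ is LNE in $(E,d)$.

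The main obstacle is the inner estimate of the third step: one has to convert the purely cross-sectional LNE hypothesis on $Y$ into a path of controlled length inside the \emph{cone} $Z$, verifying both that the concatenated arc never leaves $Z$ and that its radial and horizontal contributions are measured by the simple conic metric exactly as on the right-hand side of Proposition~\ref{prop:conic-dist}. Two points deserve care: the uniformity of $C=(1+L)/A^c$ as $\ve\to0$, guaranteed by freezing $\eta_Y$ before letting $\ve$ vary; and the apex $\ba$, which causes no trouble since $d^c$ is a genuine pseudo-distance vanishing on $N\times\{0\}$ and the construction degenerates continuously as $r,r'\to0$.
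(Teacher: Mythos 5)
Your proposal is correct and takes essentially the same route as the paper's own proof: both reduce to the associated simple conic metric via Corollary~\ref{cor:conicequivalent}, invoke the two-sided estimate of Proposition~\ref{prop:conic-dist}, and establish the key inner bound $d_\inn^Z \le |r-r'| + \min(r,r')\,L\,d_N(\by,\by')$ --- which you derive from the explicit radial-plus-horizontal concatenated arc, while the paper merely asserts the resulting inequality --- before transporting the equivalence back through the chart $\phi$. The only harmless imprecision is that the LNE infimum in $Y$ need not be attained, so the arc $\gm$ should be taken of $g_N$-length at most $L\,d_N(\by,\by')+\delta$ for arbitrary $\delta>0$, letting $\delta\to 0$ at the end.
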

\begin{proof}
The model conic metric $g^c$ writes as
$$
g^c := \rd r^2 + r^2 g^\dd(r)
$$
with $g_N = g^\dd(0)$. By Corollary~\ref{cor:conicequivalent}, up to shrinking 
$\eta$, it is equivalent as a singular Riemannian metric to 
the simple conic metric
$$
h^c = \rd r^2 + r^2 g_N 
$$
In particular the inner conic distances induced respectively by 
$g^c$ and $h^c$ over any subset of $\Cyl(N,\eta)$ are 
equivalent. 
	
Let $d^N$ be the distance on $N$ induced by $g_N$, and $d_{h^c}$ 
be the conic distance induced by $h^c$ on $Q =\Cyl(N,\eta)$. 
Let $X$ be a LNE subset of $N$. We recall Estimates 
\eqref{eq:conic-dist}
$$
\frac{|r-r'|}{2} + \min(r,r')\frac{d_N(\by,\by')}{2} \; \leq \;
d_{h^c} (\bx,\bx') \;\leq \; |r-r'| + \min(r,r')d_N(\by,\by').
$$
Let $d_N^Y$ be the inner distance on $Y$ obtained for $g_N$,
and let $d_{h^c}^Y$ be the inner conic distance over $Q$ 
obtained from $h^c$, that is the inner distance over $Q 
\setminus \dd Q$. Let $L$ be a LNE constant for $Y$.
For points $\bx,\bx' \in Q$ with $r'\geq r$, then clearly 
$$
d_{h^c}^Q(\bx,\bx') \geq |r-r'| \;\; {\rm and} \;\;  
d_{h^c}^Q(\bx,\bx') \geq r\,L\,d_N(\by,\by')
$$
Since the following estimates also holds true
$$
d_{h^c}^Q(\bx,\bx') \leq |r-r'| + r\,L \, d^N (\by,\by')
$$
we conclude that any cylinder $\Cyl(X,\ve)$ with $\ve \leq \eta$ 
is LNE w.r.t. to $h^c$. 
	
Let $d_{g^c}^Q$ be the inner (pseudo-)distance induced by $g^c$ 
over $Q$ and let $X := \phi(Q)$. By definition the spaces 
$(Q,d_{g^c}^Q)$ and $(X,d_\inn^X)$, are locally (pseudo-)isometric.
Since $g^c$ and $h^c$ are equivalent over $Q$, the result is proved.
\end{proof}
\subsection{Conic singular sub-manifolds are locally Lipschitz 
normally embedded}
\begin{theorem}\label{thm:main-compact}
Let $X$ be a subset of the conic singular manifold $(E,d)$ such 
that the germ $(X,\ba)$ at the point $\ba\in E$ is that of a 
closed conic singular sub-manifold. Then $X$ is locally LNE at 
$\ba$.
\end{theorem}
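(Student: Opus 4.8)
The plan is to pass to a model conic chart at $\ba$, \emph{straighten} the strict transform into a graph over its boundary, and then reduce to Proposition~\ref{prop:cyl-con-LNE}. First I would fix a model conic chart $\vp=\vp_\ba:\Cyl(N_\ba,\eta)\to U_\ba$ centred at $\ba$ (these exist by Proposition-Definition~\ref{propdef:coniccharts}), with model conic metric $g^c=\rd r^2+r^2g^\dd(r)$, so that $\vp$ restricts to a Riemannian isometry of $\Cyl(N_\ba,\eta)^o$ onto $U_\ba\setminus\ba$. By the definition of a conic singular sub-manifold, after shrinking $\eta$ the strict transform $S:=\stt_\ba X$ is a p-sub-manifold of $\Cyl(N_\ba,\eta)$ transverse to the boundary, and $Y:=\dd S=S\cap(N_\ba\times\{0\})$ is a compact smooth submanifold of $N_\ba$. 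Since $\vp$ carries the outer and inner conic distances of $S$ to those of $X$ near $\ba$ (collapsing $N_\ba\times\{0\}$ to $\ba$), it suffices to establish the LNE property on the image side.

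Next I would straighten $S$. Transversality makes $r|_S$ a boundary defining function on $S$, so a collar adapted to it gives a diffeomorphism $\Psi:Y\times[0,\ve)\to S$ onto a neighbourhood of $\dd S$ in $S$ with $r(\Psi(p,t))=t$ and $\Psi(p,0)=p$; writing $\Psi(p,t)=(\gamma(p,t),t)$ exhibits $S$ as the graph of a smooth map $\gamma:Y\times[0,\ve)\to N_\ba$ with $\gamma(\cdot,0)=\textup{id}_Y$. I would then compare $\Psi^*g^c$ with the straight model metric $\rd t^2+t^2\,g^\dd(t)|_{TY}$ carried by the product $Y\times[0,\ve)$ sitting inside the cylinder. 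Since $\dd_p\gamma\to\textup{id}$ as $t\to 0$ and the contribution of $\dd_t\gamma\,\rd t$ to the angular part is of size $O(t^2)\,\rd t^2$, hence dominated by the radial term $\rd t^2$, these two singular metrics are equivalent over $Y\times[0,\ve)$ for $\ve$ small; consequently $\Psi$ is bi-Lipschitz for both the inner and the outer conic distances.

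To finish, I note that $Y$ is a compact smooth (in particular $C^1$) submanifold of the smooth manifold $N_\ba$, hence each of its finitely many connected components is LNE in $(N_\ba,g^\dd(0))$ by Proposition~\ref{prop:compact-ic-LNE}. Proposition~\ref{prop:cyl-con-LNE} then gives that $\vp(Y\times[0,\ve])$ is LNE in $(E,d)$ componentwise, while pairs lying on distinct components are joined through the apex $\ba$ — legitimate because distinct components of $Y$ sit at positive $d_{N_\ba}$-distance, so the detour to $r=0$ is comparable to the outer conic distance. Transporting this through the bi-Lipschitz straightening of the previous paragraph, which identifies $X$ near $\ba$ with $\vp(Y\times[0,\ve])$ by a homeomorphism preserving inner and outer distances up to constants, shows that $X$ is locally LNE at $\ba$.

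The main obstacle is the uniform metric comparison in the straightening step as $r\to 0$. Because the conic metric degenerates like $r^2$ in the angular directions, one must verify that the tilt $\gamma(p,r)-p=O(r)$ of $S$ relative to the radial product structure does not spoil the equivalence of the two singular metrics near the apex; this is exactly where transversality (guaranteeing $r|_S$ is a boundary defining function, so that the graph is defined over the full radial interval) and the smoothness of $\gamma$ enter. The outer comparison is the more delicate half: one must absorb the cross term $\min(r,r')\,|r-r'|$ and the $O(r)$ error in $d_{N_\ba}(\gamma(p,r),\gamma(p',r'))$ into the model estimate of Proposition~\ref{prop:conic-dist}, which succeeds only after choosing $\ve$ sufficiently small.
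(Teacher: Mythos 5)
Your proposal follows essentially the same route as the paper's own proof: fix a model conic chart at $\ba$, straighten the strict transform onto the cylinder over its boundary by a collar diffeomorphism preserving the radial coordinate (your $\Psi$ is the inverse of the map $\tau$ appearing in the paper's Lemma~\ref{rmk:bi-lip-cone}), prove that this straightening is bi-Lipschitz, and conclude from Proposition~\ref{prop:compact-ic-LNE} and Proposition~\ref{prop:cyl-con-LNE}, with the same through-the-apex treatment of pairs lying on distinct components.

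One step, however, is misstated. In your second paragraph, the deduction ``these two singular metrics are equivalent over $Y\times[0,\ve)$ \ldots\ consequently $\Psi$ is bi-Lipschitz for both the inner and the outer conic distances'' is a non sequitur: equivalence of the pulled-back tensors controls lengths of curves lying \emph{inside} the sub-manifold, hence only the inner distances; the outer distance admits competitor paths through the ambient cylinder, which no tensor comparison along $S$ can detect. This does not sink the proposal, because your final paragraph supplies the correct substitute: compare outer distances directly via the estimate of Proposition~\ref{prop:conic-dist}. To make that absorption argument actually close, the error must be grouped as $d_{N_\ba}(\gamma(p,r),\gamma(p',r')) \le d_{N_\ba}(\gamma(p,r),\gamma(p',r)) + d_{N_\ba}(\gamma(p',r),\gamma(p',r'))$, where the first term is bounded by a constant times $d_{N_\ba}(p,p')$ (uniform Lipschitz bound on $\gamma(\cdot,r)$ combined with the LNE property of $Y$, within one component), and the second contributes $\min(r,r')\,O(|r-r'|)\le O(\ve)\,|r-r'|$; the cruder bound $d_{N_\ba}(\gamma(p,r),p)=O(r)$ applied to each point separately produces an error of size $\min(r,r')^2$, which cannot be absorbed into $|r-r'|+\min(r,r')\,d_{N_\ba}(p,p')$. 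Carried out this way --- and this outer comparison is precisely the content, and the bulk of the effort, of the paper's Lemma~\ref{rmk:bi-lip-cone} --- your plan is sound.
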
 
\begin{proof}
Without loss of generality we can assume that $X\setminus \ba$ 
is a smooth  sub-manifold of a smooth manifold $E\setminus\ba$. 
There exists a conic chart $\phi :P =\Cyl(N,\eta) \to U$, 
centred at $\ba$ in $E$ such that $\phi^*g^o$ extends smoothly 
as the model conic metric $g^c$ over $P$. 
	
By definition, the closure $\cX$ of $\phi^{-1}(X\setminus \ba)$ 
in $P$ is a p-sub-manifold. 
It suffices to show that the outer conic distance and inner conic distance 
over $\cX$ obtained from the conic metric $g^c$ are 
equivalent.
	
Since $(\by,r)$ are global coordinates over $P$, the model
conic metric $g^c$ writes
$$
g^c = \rd r^2 + r^2 g^\dd(r).
$$
Since the  LNE property by Corollary \ref{cor:conicequivalent} stays true regardless of the conic 
metric we are working with, we can assume without loss of 
generality, up to shrinking $\eta\leq 1$, that $g^c$ is a simple conic metric, i.e. the family $r\to g^\dd(r)$ is constant  and equal 
to the associated Riemmanian metric on the boundary $g_N :=g^\dd(0)$ and there exists a collar 
neighbourhood diffeomorphism 
$$
\tau : \cX \to \cC = \dd\cX \times [0,\eta), \;\; (\by,r) 
\mapsto (\omg(\by,r),r)). 
$$
\begin{lemma}\label{rmk:bi-lip-cone}
The mapping 
$\phi\circ\tau\circ\phi^{-1} : (\phi(\cX),d) \to (\phi(\cC),d)$ 
is bi-Lipschitz for the outer metric space structures. 	
\end{lemma}
\begin{proof}
Consider $P$ equipped with the product metric 
$$
\cyl := \rd r^2 + g_N.
$$ 
Let $|-|_\cyl$ be the induced norm on $TP$ and $d^\cyl$ be the 
induced distance on $P$. Let $|-|_N$ be the norm on $TN$ 
obtained from $g_N$, and $d^N$ be the induced distance on $N$. 
Last, let $|-|_c$ be the semi-norm induced by $g^c$ on $TP$ and 
$d^c$ be the conic distance on $P$. We recall 
that, given $\bv = (\xi,\lbd) \in T_{(\by,r)} P = T_\by N \times 
\R$, the norms of $\bv$ are as follows 
$$
|\bv|_\cyl^2 = \lbd^2 + |\xi|_N^2 \;\; {\rm and} \;\; 
|\bv|_c^2 = \lbd^2 + r^2 \,|\xi|_N^2.
$$
Observe that the identity mapping $(P,d^\cyl) \to (P,d^c)$ is 
(pseudo-)Lipschitz with Lipschitz constant $1$ whenever $\eta 
\leq 1$.  
	
Observe that the restriction of $|-|_\cyl$ to $TN$ is simply 
$|-|_N$. Given $\bz = (\by,r) \in \cX$, the (restriction to 
$T\cX$ of the) $\cyl$-norms for the deriatives of  smooth mappings  $\tau$ and 
$\omg$ are 
$$
\|D_\bz \tau\|_\cyl \;\; {\rm and} \;\; \|D_\bz \omg\|_\cyl, 
$$
Analogously, over $\cX\setminus \dd\cX$, the (restriction to $T\cX$ 
of the) $g^c$-norms for $\tau$ and $\omg$ is denoted by 
$$
\|D_\bz \tau\|_c\;\; {\rm and} \;\; \|D_\bz \omg\|_c, 
$$
Given a subset $S$ of $P$ and $r\geq 0$, let 
$$
S_r : = S\cap (N\times r).  
$$
Since $\omg|_{\dd \cX\times 0}$ is a diffeomorphism and 
$\tau(\cX_r)= \cC_r$, the mapping $\dd_N \omg(\bz) = 
D_\bz \omg|_{T_\bz \cX_r}$ is invertible, up to shrinking $\eta$ 
there exists a positive constant $K$ such that 
$$
\frac{1}{K}\leq \|\dd_N \omg(\bz)\|_\cyl \leq K, \;\;\forall \; 
\bz\in \cX.
$$
Observe that
$$
\|\dd_N \omg(\bz)\|_\cyl = \|\dd_N\omg(\bz)\|_c.
$$
Let $\bv = (\xi,\lbd) \in T_\bz \cC = T_\by\dd\cX\times\R 
\subset	T_\bz P = T_\by N \times \R$ with $\bz\in 
\cX$. We get
$$
D_\bz\omg\cdot \bv = \dd_N\omg(\bz)\cdot\bv + \lbd \dd_r\omg.  
$$
Thus there exists a positive constant $A$ so that 
$$
|(\dd_r \omg)(\bz)|_\cyl \leq A \cdot \|\dd_N\omg(\bz)\|_\cyl, 
\;\;\; \forall \; \bz\in \cX, 
$$
which yields
$$
|D_\bz\omg\cdot \bv|_c = r \cdot |D_\bz\omg\cdot \bv|_\cyl \leq 
r B \cdot \|\dd_N\omg(\bz)\|_\cyl\cdot|\bv|_\cyl, \;\; {\rm 
where} \;\;\ B = \sqrt{1+A^2}.
$$
Thus
\begin{equation}\label{eq:norm-c-mu}
\|D_\bz\omg\cdot \bv\|_c \leq B \cdot \|\dd_N\omg(\bz)\|_\cyl, 
\;\; \bz\in\cX.
\end{equation}
Let $\bun_\R$ be the identity mapping of $\R$, the latter 
being identified with $T_\bz (0,r_0)$ and $T_{\tau(\bz)} 
(0,r_0)$. Since $D_\bz \tau = D_\bz\omg \oplus \bun_\R$, we 
deduce that 
$$
\|D_\bz \tau\|_c \leq 1+ r\cdot\|D_\bz \omg\| \leq 1 + B\cdot 
|\dd_N\omg(\bz)\|_\cyl
$$
Thus $\tau$ induces (pseudo-)Lipschitz homeomorphisms 
$(\cX_i,d^c) \to (\cC_i,d^c)$, where $\cX_1,\ldots,\cX_s,$ are 
the connected components of $\cX$ and $\cC_i = \tau(\cX_i)$. 
Since the smooth inverse $\tau^{-1}$ is of the form $(r,\by)
\to (\mu(r,\by),r)$ the exact same arguments show
that $\tau^{-1}$ is also (pseudo-)Lipschitz with respect to the outer 
conic metrics $d^c$ over each $\cC_i$. 
	
Since $\dd\cX = \cup_{i=1}^s \cS_i$, where $\cS_i$ is a smooth 
compact connected sub-manifold of $\bS^{n-1}$ if $\ba\in 
E_\sm^d$ or of $N$ if $\ba \in E_\sing^d$, and $\cS_i \cap 
\cS_j = \emptyset$ for $1\leq i < j \leq s$, we deduce that
$$
\dlt := \min\{d^\cyl(\cS_i,\cS_j) : 1\leq i < j\leq k\} >0.
$$ 
From which follows that for any $r\leq \eta$, up to shrinking 
$\eta$, we get 
$$
\min\{d^c(\cX_i)_r,(\cX_j)_r) : 1\leq i < j\leq k\} 
\geq r \frac{\dlt}{2}.
$$
Fix a pair $1\leq i < j \leq s$ and let $\bz_k = (\by_k,r_k) 
\in \cX_k$ with $k=i,j$. Then  
$$
d^c(\bz_i,\dd P) + d^c(\dd P,\bz_j) \geq d^c(\bz_i,\bz_j) \geq
d^c ( (\cX_i)_{r_i},(\cX_j)_{r_j} ).
$$
Since $\tau|_{\dd\cX}$ is the identity mapping of $\dd\cX$, up 
to shrinking $\eta$, we deduce the following estimates
$$
r_i + r_j \geq d^c(\bz_i,\bz_j) \geq \frac{\dlt}{4} \, [r_i + 
r_j]
$$
When $\bw_k = \tau(\bz_k) = (\omg_k,r_k)$, for $k=i,j$, we get
$$
d^c(\bw_i,\bw_j) \leq  d^c(\bw_i,\dd M) + d^c(\dd M,\bw_j) =
r_i + r_j \leq \frac{4}{\dlt} \, d^c(\bz_i,\bz_j)  
$$
We conclude that $\tau: (\cX\setminus\dd\cX,d^c) \to 
(\cC\setminus\dd\cC,d^c)$ is (pseudo-)Lipschitz. The same 
arguments will give that $\tau^{-1}$ is (pseudo-)Lipschitz as 
well. 
\end{proof}
Since the image of the cylinder $\phi(\cC)$ is LNE with respect to the 
metric $d$ by Proposition \ref{prop:cyl-con-LNE}, so is 
$\phi(\cX)$. Therefore $X$ is locally LNE at $\ba$.
\end{proof}
By Theorem \ref{thm:main-compact}, Lemma \ref{rmk:bi-lip-cone} 
and Proposition \ref{prop:cyl-con-LNE} we get a family of LNE 
representatives as follows.
\begin{corollary}\label{cor:representatives}
If $X$ is a germ at a point $\ba $ of a conic singular 
sub-manifold in a conic singular manifold $(E,d)$, then there 
exists a positive number $\eta_X$ and a germ of a  non-negative 
function $r:(E,\ba)\to \R$, smooth outside of $\ba$ and 
vanishing only at the point $\ba$ such that $X\cap 
r^{-1}([0,\eta])$ is locally LNE at $\ba$ for every 
$\eta<\eta_X$. 
\end{corollary}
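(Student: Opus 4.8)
The plan is to reuse the geometric data constructed in the proof of Theorem~\ref{thm:main-compact} and to notice that the argument there in fact produces a whole one-parameter family of LNE representatives, indexed by the height at which one truncates along a radial coordinate. First I would fix, as in that proof, a model conic chart $\phi : P = \Cyl(N,\eta_0) \to U$ centred at $\ba$, the strict transform $\cX = \clos_P(\phi^{-1}(X\setminus\ba))$, which is a p-sub-manifold of $P$, and the collar diffeomorphism $\tau : \cX \to \cC = \dd\cX \times [0,\eta_0)$, $(\by,r)\mapsto(\omg(\by,r),r)$. The function required by the statement is then the push-down of the cylinder coordinate: set $r(\ba)=0$ and $r(\phi(\by,t))=t$ on $U\setminus\ba$. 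This $r$ is non-negative, smooth on $U\setminus\ba$ and vanishes only at $\ba$, so it defines an admissible germ.

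The point on which everything turns is that $\tau$ preserves the radial coordinate, so that $\tau(\cX\cap\{t\le\eta\}) = \dd\cX\times[0,\eta]$ for every $\eta<\eta_0$. Writing $\cX_{\le\eta} := \cX\cap(N\times[0,\eta])$, this means that the representative $X\cap r^{-1}([0,\eta])$ is exactly $\phi(\cX_{\le\eta})$ and that $\phi\circ\tau\circ\phi^{-1}$ restricts to a homeomorphism from $\phi(\cX_{\le\eta})$ onto the truncated cone $\phi(\dd\cX\times[0,\eta])$. By Lemma~\ref{rmk:bi-lip-cone} this map is bi-Lipschitz for the ambient distance $d$, and its Lipschitz constant is global on $\cX$, hence does not depend on the truncation height~$\eta$.

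Next I would show that every truncated cone $\phi(\dd\cX\times[0,\eta])$ is LNE for all small $\eta$. Each connected component of $\dd\cX$ is a compact smooth submanifold of $N$, hence LNE in $N$ by Proposition~\ref{prop:compact-ic-LNE}, so Proposition~\ref{prop:cyl-con-LNE} provides a height $\eta_X\le\eta_0$ for which the cone over each component is LNE in $(E,d)$ for \emph{every} $0<\eta\le\eta_X$; the finitely many such cones share the common apex $\ba$ and remain mutually separated, by the estimates established within the proof of Lemma~\ref{rmk:bi-lip-cone}, so their union $\phi(\dd\cX\times[0,\eta])$ is again LNE. Finally I would transport the LNE property across the bi-Lipschitz map. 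Since the length of an arc is measured through the ambient distance $d$, a bi-Lipschitz homeomorphism for $d$ distorts both outer and inner distances by a bounded factor; therefore the inequality $d_\inn\le C\,d$ valid on $\phi(\dd\cX\times[0,\eta])$ pulls back, with the constant multiplied by the square of the Lipschitz constant, to $\phi(\cX_{\le\eta})$. Hence $X\cap r^{-1}([0,\eta])=\phi(\cX_{\le\eta})$ is LNE, and in particular locally LNE at $\ba$, for every $\eta<\eta_X$.

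The only genuinely delicate issue is the uniformity in $\eta$, namely that shrinking the radial cut never spoils the conclusion. This is automatic here: Proposition~\ref{prop:cyl-con-LNE} is already formulated over the entire range $0<\eta\le\eta_X$, and the collar map of Lemma~\ref{rmk:bi-lip-cone} is bi-Lipschitz with a single constant on all of $\cX$. Both ingredients are insensitive to the height at which we truncate, so no new estimate is needed and the full family of LNE representatives $X\cap r^{-1}([0,\eta])$ follows at once.
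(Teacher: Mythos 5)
Your proposal is correct and takes essentially the same route as the paper: the paper obtains Corollary~\ref{cor:representatives} precisely by combining Theorem~\ref{thm:main-compact}, Lemma~\ref{rmk:bi-lip-cone} and Proposition~\ref{prop:cyl-con-LNE}, which is exactly the assembly you carry out, with $r$ the push-down of the radial coordinate of the conic chart and uniformity in $\eta$ coming from the single bi-Lipschitz constant of $\tau$ and the full range $0<\ve\leq\eta_Y$ in Proposition~\ref{prop:cyl-con-LNE}. Your component-wise treatment of $\dd\cX$ (via Proposition~\ref{prop:compact-ic-LNE} plus the separation estimates from the proof of Lemma~\ref{rmk:bi-lip-cone}) only spells out details the paper leaves implicit.
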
 
Theorem \ref{thm:main-compact} and Proposition 
\ref{prop:compact-ic-LNE} yield the following relevant 
implications.
\begin{corollary}
A  conic singular sub-manifold of a conic singular manifold
is locally LNE. 
\end{corollary}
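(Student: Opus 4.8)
The plan is to verify the local LNE property pointwise, using that a conic singular sub-manifold $X$ is by hypothesis a closed subset of $E$, so that $\clos_E(X)=X$. By Definition~\ref{defLNEset}(iii) it then suffices to show that $X$ is locally LNE at every $\ba\in X$, and I would split the argument according to whether $\ba$ lies in the finite singular set $E_\sing$ or in the smooth locus $E^o$. These two cases are exactly what Theorem~\ref{thm:main-compact} and Proposition~\ref{prop:compact-ic-LNE} are designed to handle, so the corollary is essentially a patching statement.

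First I would treat a singular point $\ba\in X\cap E_\sing$. Here the germ $(X,\ba)$ is, by the very definition of a conic singular sub-manifold, that of a closed conic singular sub-manifold: the strict transform $\stt_\ba X$ by every conic chart centred at $\ba$ is a p-sub-manifold near the boundary of the chart domain. Thus Theorem~\ref{thm:main-compact} applies verbatim and delivers that $X$ is locally LNE at $\ba$. Since $E_\sing$ is finite, this settles all singular points at once, and it is here that all the genuine conic analysis sits.

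Next I would treat a smooth point $\ba\in X\cap E^o$. Near such a point $X$ is a smooth embedded sub-manifold of the Riemannian manifold $(E^o,g^o)$ supplied by Proposition~\ref{prop:rksonconicsing}(2). I would choose a sufficiently small compact neighbourhood $V$ of $\ba$ in $E^o$ so that $X\cap V$ is a connected compact $C^1$ embedded sub-manifold of $E^o$, possibly with boundary; the \emph{in particular} clause of Proposition~\ref{prop:compact-ic-LNE} then guarantees that $X\cap V$ is LNE, whence $X$ is locally LNE at $\ba$. Combining the two cases gives local LNE at every point of $X=\clos_E(X)$, which is the assertion. The main obstacle is entirely absorbed by Theorem~\ref{thm:main-compact}; the smooth-point case is routine, the only mild care being to shrink $V$ enough that $X\cap V$ is a single compact embedded piece to which Proposition~\ref{prop:compact-ic-LNE} applies.
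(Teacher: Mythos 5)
Your overall route --- reduce to pointwise local LNE at points of $X=\clos_E(X)$, then combine Theorem~\ref{thm:main-compact} with the last clause of Proposition~\ref{prop:compact-ic-LNE} --- is exactly the combination the paper invokes for this corollary. However, your case division is taken with respect to the singular set of the \emph{ambient} manifold, $\ba\in X\cap E_\sing$ versus $\ba\in X\cap E^o$, and this is where a genuine gap appears. By definition, a conic singular sub-manifold $X$ is a closed set which is a sub-manifold of $E^o$ only \emph{outside a finite subset}, and that finite subset need not be contained in $E_\sing$: the singular points of $X$ may sit at smooth points of $E$. Concretely, take $E=\R^2$ with the Euclidean metric (so $E_\sing=\emptyset$ and $E^o=\R^2$) and $X=\{x^2=y^2\}$, the union of two lines through the origin: its strict transform under the spherical blow-up centred at $\bbo$ consists of four disjoint arcs $\{\tht\}\times[0,\infty)$, each transverse to the boundary circle, hence is a p-sub-manifold, so $X$ is a conic singular sub-manifold of $E$; yet $X$ is not a smooth sub-manifold near $\bbo\in E^o$. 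At such a point your Case 2 breaks down: the claim that ``near such a point $X$ is a smooth embedded sub-manifold of $(E^o,g^o)$'' is false, no neighbourhood $V$ of the kind you describe exists, and Proposition~\ref{prop:compact-ic-LNE} cannot be applied. The paper's own Example section contemplates exactly this situation, since generic real algebraic subsets of $\Rn$ are globally conic singular sub-manifolds that may be singular, while $\Rn$ itself has no singular points.

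The repair is immediate, and it is how the corollary should be read: split according to the singular set of $X$, not of $E$. Theorem~\ref{thm:main-compact} is stated for a germ $(X,\ba)$ of a closed conic singular sub-manifold at an \emph{arbitrary} point $\ba\in E$; conic charts exist centred at every point of $E$, the chart at a smooth point being $\phi_U\circ\bbl_{\ba_U}$ of Proposition-Definition~\ref{propdef:coniccharts}, and the proof of Theorem~\ref{thm:main-compact} explicitly treats both $\ba\in E_\sm^d$ and $\ba\in E_\sing^d$. So it covers every point of the finite set where $X$ fails to be smooth, wherever those points lie in $E$; Proposition~\ref{prop:compact-ic-LNE} is then needed only at the points where $X$ genuinely is a smooth sub-manifold of $E^o$, where your Case 2 argument is correct.
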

\begin{corollary}\label{cor:compactLNE}
A connected compact conic singular sub-manifold of a conic singular 
manifold is LNE. 
\end{corollary}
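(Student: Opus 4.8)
The plan is to reduce the global statement to a purely local one and then feed it into the compactness bridge already at our disposal. The key tool is Proposition~\ref{prop:compact-ic-LNE}, which asserts that for a connected compact subset of $(E,d)$ the property of being LNE is equivalent to that of being locally LNE. Since a conic singular sub-manifold $X$ is closed, establishing local LNE at each point of $X$ is exactly what is needed, and the corollary will then be immediate once $X$ is assumed connected and compact.

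To verify local LNE pointwise, I would split the points of $X$ according to the ambient structure. At a point $\ba\in E_\sing\cap X$ the germ $(X,\ba)$ is by definition that of a closed conic singular sub-manifold, so Theorem~\ref{thm:main-compact} applies directly and yields local LNE at $\ba$. The remaining points lie in $E^o$; there, away from a finite set, $X$ is a $C^1$ embedded sub-manifold of the smooth Riemannian manifold $E^o$, hence locally LNE by the elementary fact recalled in the Introduction that a smooth sub-manifold of a Riemannian manifold is always locally LNE. At the finitely many exceptional points $\ba\in E^o\cap X$ where $X$ is merely a closed conic singular sub-manifold, I would pass to the spherical blowing-up $\bbl_\ba$ of Section~\ref{section:blup}, used as a conic chart centred at $\ba$; this again presents $(X,\ba)$ as the germ of a closed conic singular sub-manifold, so Theorem~\ref{thm:main-compact} applies once more.

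With local LNE checked at every point of the compact connected set $X$, the conclusion follows at once from Proposition~\ref{prop:compact-ic-LNE}.

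The only genuine work is the passage from local to global, which is precisely the content of Proposition~\ref{prop:compact-ic-LNE} and is therefore already discharged. I would flag it nonetheless as the conceptual obstacle: local LNE furnishes, around each point, a neighbourhood and a constant bounding the inner distance by the outer one, but both the constants and the sizes of these neighbourhoods could a priori degenerate. Compactness supplies a finite subcover carrying a uniform constant, while connectedness permits chaining the local inner-distance estimates along a short outer path into a single global inner-over-outer bound. Since that proposition is available and its proof is metric in nature, nothing further is required and the corollary is immediate.
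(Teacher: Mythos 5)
Your proposal is correct and follows exactly the paper's route: the paper derives this corollary by combining Theorem~\ref{thm:main-compact} (which gives local LNE at every point of a conic singular sub-manifold, since conic charts exist at smooth points via the spherical blowing-up just as you describe) with Proposition~\ref{prop:compact-ic-LNE} to pass from locally LNE to LNE using compactness and connectedness. Your finer case distinction among singular, generic smooth, and exceptional smooth points is harmless but unnecessary, as Theorem~\ref{thm:main-compact} applies uniformly at every point of~$X$.
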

\subsection{Globally conic singular sub-manifolds are Lipschitz 
normally  embedded}
\begin{theorem}\label{thm:MainAsympConic}
Let $X$ be a globally conic singular sub-manifold of the
globally conic singular manifold $(E,d)$. Then each
connected component of $X$ is LNE in $E$.
\end{theorem}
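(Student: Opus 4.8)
The plan is to transport the problem to the conic completion $(\overline{E},\overline{d})$, where the asymptotically conic ends of $E$ have become genuine conic singular points, to invoke there the compact statement of Corollary~\ref{cor:compactLNE}, and finally to pull the conclusion back to $(E,d)$ by means of the metric comparison in Corollary~\ref{cor:conic-complet}. What makes the pull-back succeed is that the conic-inversion weight $r\,r'$ relating $d$ and $\overline{d}$ distorts the \emph{inner} and the \emph{outer} distance on $X$ by the \emph{same} factor, so it cancels in the ratio measuring the failure of LNE. Throughout, $\asymp$ abbreviates equality up to fixed positive multiplicative constants.

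First I would fix a connected component $X_i$ of $X$ and put $\overline{X}_i:=\clos_{\overline{E}}(X_i)$. By Proposition~\ref{prop:acs-submanif-complet} the conic completion $\overline{X}$ is a conic singular sub-manifold of the compact conic singular manifold $\overline{E}$, hence so is $\overline{X}_i$; it is connected as the closure of a connected set and compact as a closed subset of the compact space $\overline{E}$. Corollary~\ref{cor:compactLNE} then yields a constant $C>0$ with
$$
\overline{d}_{\overline{X}_i}(\bx,\bx')\;\leq\;\overline{d}_\inn^{\overline{X}_i}(\bx,\bx')\;\leq\; C\,\overline{d}_{\overline{X}_i}(\bx,\bx'),\qquad \bx,\bx'\in\overline{X}_i .
$$

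Next I would record the two comparisons provided by Corollary~\ref{cor:conic-complet}. For the \emph{outer} distances, applying it to the ambient pair $(E,d)$, $(\overline{E},\overline{d})$ gives constants $A,B>0$ with $A\,\overline{d}(\bx,\bx')\leq r\,r'\,d(\bx,\bx')\leq B\,\overline{d}(\bx,\bx')$ for $r=\overline{d}(\bx,\infty_E)$, which on $X_i$ compares $d_{X_i}$ with $\overline{d}_{\overline{X}_i}$. For the \emph{inner} distances I would use that, by Proposition~\ref{prop:csubisc}, $(X_i,d_\inn^{X_i})$ is itself a globally conic singular manifold, and that by the observation following Definition~\ref{def:con-compl} its intrinsic conic completion is isometric to $(\overline{X}_i,\overline{d}_\inn^{\overline{X}_i})$. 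Feeding this globally conic singular manifold into Corollary~\ref{cor:conic-complet} produces constants $A',B'>0$ with $A'\,\overline{d}_\inn^{\overline{X}_i}(\bx,\bx')\leq \rho\,\rho'\,d_\inn^{X_i}(\bx,\bx')\leq B'\,\overline{d}_\inn^{\overline{X}_i}(\bx,\bx')$, carrying now the \emph{intrinsic} weight $\rho=\overline{d}_\inn^{\overline{X}_i}(\bx,\infty_{X_i})$, where $\infty_{X_i}:=\overline{X}_i\cap\infty_E$ is the finite set of ends of $X_i$.

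It remains to reconcile the two weights and chain the estimates. Since $\overline{X}_i$ is LNE, its inner and outer distances to each point of $\infty_{X_i}$ agree up to $C$, whence $\rho\asymp\overline{d}(\bx,\infty_{X_i})$; moreover a point of $X_i$ can accumulate at an end of $E$ only at an end already belonging to $\overline{X}_i$, so $\overline{d}(\bx,\infty_{X_i})\asymp\overline{d}(\bx,\infty_E)=r$ on $X_i$ (on the complementary compact part both weights are bounded above and below). Thus $\rho\asymp r$, and combining the inner comparison, the LNE inequality for $\overline{X}_i$, the relation $\rho\rho'\asymp rr'$, and the outer comparison gives
$$
d_\inn^{X_i}(\bx,\bx')\;\asymp\;\frac{\overline{d}_\inn^{\overline{X}_i}(\bx,\bx')}{\rho\,\rho'}\;\asymp\;\frac{\overline{d}_{\overline{X}_i}(\bx,\bx')}{r\,r'}\;\asymp\;d_{X_i}(\bx,\bx') ,
$$
so the identity $(X_i,d_{X_i})\to(X_i,d_\inn^{X_i})$ is bi-Lipschitz, i.e. $X_i$ is LNE in $E$. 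The main obstacle I anticipate is exactly this last reconciliation: the inner and outer comparisons come a priori with \emph{different} radial weights $\rho$ and $r$, and proving $\rho\asymp r$ is where the LNE of the completion $\overline{X}_i$ near its end points, together with the identification of the two conic completions, is genuinely needed.
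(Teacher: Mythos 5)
Your proposal is correct and follows essentially the same route as the paper's own proof: pass to the conic completion, apply Corollary~\ref{cor:compactLNE} to the compact completion of $X$, invoke Corollary~\ref{cor:conic-complet} twice (once for the outer distances, once for the inner distances of the sub-manifold viewed as a globally conic singular manifold), and cancel the radial weights using the LNE constant of the completion. Your explicit reconciliation $\rho\asymp r$ (via the observation that points of $X$ can only accumulate at ends of $E$ lying in $\overline{X}$, plus boundedness on the compact part) is precisely the step the paper compresses into the inequality $r\leq\tilde{r}\leq L'\,r$.
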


\invisible{Before  proof of Theorem \ref{thm:MainAsympConic}, we will show Lemma~\ref{lem:bilip-infty}.

Let $(E,d)$ be a globally conic singular manifold.  The germ at infinity of a subset $X$ of $E$  is the germ defined by complements $X\setminus K$ where $K$ is a closed and bounded subset of $E$ and is denoted $(X,\infty_E)$. 

\begin{lemma}\label{lem:bilip-infty}
Let $X$ be a  subset of the  globally conic singular manifold $(E,d)$ such that
the germ $(X,\infty_E)$ is asymptotically conic. There exists a germ 
of a positive proper smooth function $R:(E,\infty_E)\to \R$ and a
positive constant $\eta_X$ such that each connected component of 
$X\cap R^{-1}([\eta, \infty))$ is LNE in $E$ for all $\eta>\eta_X$. 
\end{lemma} 
\begin{proof}
Let $\phi:\Cyl(N,\eta)^o\to U$ be an asympotically conic chart of $E$ and denote $P :=\Cyl(N,\eta)$.
Let $\cX$ be the closure of $\phi^{-1}(X)$ in $P$, which is a closed p-sub-manifold of $P$, 
up to shrinking $\eta$. Without loss of generality we can assume that both $N$ and $\cX$ are 
connected.
	
Let $h^o$ be the model conic metric over $P$ extending $r^4\phi^*g^o$, where $r$ is the 
coordinate of $P$ along $[0,\eta)$. Let $h^\infty = r^{-4}h^o = \phi^*g^o$ be the associated 
asymptotically conic metric.

Since $\cX$ is a p-sub-manifold, the restriction of $h^o$ to $\cX$ is a model conic 
metric on $\cX$, while the restriction of $h^\infty$ to $\cX^o = \cX\setminus\dd\cX$ 
is an ac-metric over $\cX^o$.

Let $d^\omg$ be the (pseudo-)distance associated with $h^\omg$ for $\omg = o,\infty$. Let 
$d_\omg^\cX$ and $d_{\omg,\inn}^\cX$ be the outer, respectively the inner, distance obtained 
from $d^\omg$ when restricted to $\cX$. Theorem \ref{thm:main-compact} implies existence of a 
constant $L^o$ such that
$$
d_o^\cX \;\leq\; L^o \, d_{o,\inn}^\cX
$$
We conclude the proof by using Proposition \ref{prop:inv-dist} twice for the above estimate: once for $d_o^\cX,d_\infty^\cX$
and once with $d_{o,\inn}^\cX,d_{\infty,\inn}^\cX$. Thus we get the claim by Theorem~\ref{thm:main-compact}
and Corollary~\ref{cor:representatives}, 
since the function $R$ is just $\frac{1}{r}\circ \phi^{-1}$.
\end{proof}
}
\begin{proof}
Without loss of generality we can assume that $X$ is connected.

Let $(\Ebar,\dbar)$ be a conic completion of $(E,d)$ and let $Y:=\ovX$ be the 
closure of $X$ in $\Ebar$. Let $\infty_{X}=\infty_E\cap Y$ be the conic ends  of 
$X$. Since $(Y,\dbar_\inn^{Y})$ is a conic completion of the
globally conic singular manifold $(X,d_\inn^X)$,  we get
$$
\dbar_\inn^{Y}|_X = \dbar_\inn^X.
$$
Since $Y$ is a conic singular sub-manifold of the compact conic singular manifold 
$\Ebar$, by Corollary~\ref{cor:compactLNE} it is LNE, thus there exits a positive 
constant $L'$ such that 
$$
\dbar_{Y}\leq \dbar_\inn^{Y} \leq L' \, \dbar_{Y}.   
$$
Corollary \ref{cor:conic-complet}, restricting from $\Ebar$ to $Y$, implies there 
exist positive constants $A', B'$ such that
$$
A'\,\dbar_{Y} (\bx,\bx')\;\leq\; r\,r'\,d_X(\bx,\bx') \;\leq \; B'\,\dbar_{Y}(\bx,\bx').
$$
Define  $\tr = \dbar_\inn^{Y}(\bx,\infty_{X})$ and $\tr' = \dbar_\inn^{Y}(\bx',\infty_{X})$,
Corollary \ref{cor:conic-complet} applied to $Y$ with the inner distance $\dbar_\inn^{Y}$ gives positive constants $A'', B''$ such that
$$
A''\,\dbar_\inn^{Y} (\bx,\bx')\;\leq\; \tr\,\tr'\,d_\inn^X(\bx,\bx') \;\leq \; B''\,\dbar_\inn^{Y}(\bx,\bx'). 
$$
Therefore  over $X$  we get
$$
d_X \leq d_\inn^X \leq L\,d_X \;\; {\rm with} \;\; L = \frac{B''}{A'}\,(L')^3,
$$
since $\dbar_{Y}\leq \dbar_\inn^{Y}\leq L' \,\dbar_{Y}$, yielding $r\leq \tr \leq L' \, r$.
\end{proof}

By Theorems~\ref{thm:main-compact} and~\ref{thm:MainAsympConic}  we easily get the following variation, of interest for applications. 
\begin{corollary}\label{thm:LNE-LNE-scattering}
Let $(E,d)$ be a globally conic singular manifold and let $X$ be its  closed 
connected subset  which is asymptotically conic. Then $X$ is LNE in $(E,d)$ if
and only if its conic completion $\overline{X}$ is LNE in $(\Ebar,\dbar)$. 
\end{corollary}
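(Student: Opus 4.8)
The plan is to re-run the closing estimates of the proof of Theorem~\ref{thm:MainAsympConic} in both directions, the only genuinely new ingredient being a local Lipschitz bound at the conic ends, which Theorem~\ref{thm:main-compact} provides. Fix a conic completion $(\Ebar,\dbar)$ of $(E,d)$, set $Y:=\ovX$ and let $\infty_X:=\infty_E\cap Y$ be its finitely many conic ends; since $X$ is closed and connected, $Y$ is a compact connected subset of the compact conic singular manifold $\Ebar$. For $\bx,\bx'\in X$ write $r:=\dbar_Y(\bx,\infty_X)$ and $r':=\dbar_Y(\bx',\infty_X)$ for the outer distances to the ends, and $\tr:=\dbar_\inn^{Y}(\bx,\infty_X)$, $\tr':=\dbar_\inn^{Y}(\bx',\infty_X)$ for the inner ones. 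Because a point of $X$ stays a definite distance from those ends of $E$ not lying in $\ovX$, the quantity $\dbar(\bx,\infty_E)$ of Corollary~\ref{cor:conic-complet} may be replaced by $r$ up to constants; and both LNE statements presuppose that the relevant inner pseudo-distances are genuine distances, so $Y$ has finite inner diameter.

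The argument rests on three comparisons. Restricting Corollary~\ref{cor:conic-complet} from $\Ebar$ to $Y$ gives $A',B'>0$ with
\[
A'\,\dbar_Y(\bx,\bx')\;\le\; r\,r'\,d_X(\bx,\bx')\;\le\; B'\,\dbar_Y(\bx,\bx').
\]
The inner analogue gives $A'',B''>0$ with
\[
A''\,\dbar_\inn^{Y}(\bx,\bx')\;\le\; \tr\,\tr'\,d_\inn^X(\bx,\bx')\;\le\; B''\,\dbar_\inn^{Y}(\bx,\bx').
\]
The new local input is a constant $L'\ge 1$ with $r\le \tr\le L'\,r$ throughout $X$.

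For the third comparison, note that since $X$ is asymptotically conic the strict transform of $Y$ by the conic chart of $\Ebar$ at each point of $\infty_X$ is a p-sub-manifold --- this is Proposition~\ref{prop:acs-submanif-complet} localized at the ends and read through the conic inversion --- so $Y$ is a conic singular sub-manifold of $\Ebar$ near $\infty_X\subset\Ebar_\sing$. Theorem~\ref{thm:main-compact} then makes $Y$ locally LNE at each end, whence $\tr\le L_\infty\,r$ near $\infty_X$; as $r$ is bounded below on the complementary compact set while the inner diameter of $Y$ is finite, this upgrades to the global bound, the inequality $r\le\tr$ being automatic. I expect the inner comparison to be the main obstacle: it cannot be quoted from Theorem~\ref{thm:MainAsympConic}, because $(X,d_\inn^X)$ need not be a conic singular manifold at the finite singular points, where $X$ may be wild. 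The remedy is that the proof of Corollary~\ref{cor:conic-complet} uses only the asymptotically conic structure in a neighbourhood of the ends together with compactness of the complement, and never the conic structure at the finite singularities; since $X$ does carry that asymptotic structure near $\infty_X$, the very same proof applies to $(X,d_\inn^X)$ and its completion $(Y,\dbar_\inn^Y)$ and yields the inner comparison.

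With the three comparisons in hand, both implications are routine algebra reproducing the end of the proof of Theorem~\ref{thm:MainAsympConic}. If $Y$ is LNE, say $\dbar_\inn^Y\le L''\,\dbar_Y$, then
\[
\tr\,\tr'\,d_\inn^X\;\le\; B''\,\dbar_\inn^Y\;\le\; B''L''\,\dbar_Y\;\le\;\tfrac{B''L''}{A'}\,r\,r'\,d_X,
\]
and dividing by $\tr\,\tr'$ while using $r\,r'\le\tr\,\tr'$ gives $d_\inn^X\le (B''L''/A')\,d_X$, so $X$ is LNE in $E$. Conversely, if $X$ is LNE, say $d_\inn^X\le L\,d_X$, then
\[
\dbar_\inn^Y\;\le\;\tfrac{1}{A''}\,\tr\,\tr'\,d_\inn^X\;\le\;\tfrac{L}{A''}\,\tr\,\tr'\,d_X\;\le\;\tfrac{LB'}{A''}\,\tfrac{\tr\,\tr'}{r\,r'}\,\dbar_Y\;\le\;\tfrac{LB'(L')^2}{A''}\,\dbar_Y,
\]
where the last step uses $\tr\le L'\,r$, so $Y=\ovX$ is LNE in $\Ebar$. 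This gives the asserted equivalence.
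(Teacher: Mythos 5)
Your proposal is correct and takes essentially the same route as the paper, whose one-line proof is exactly this combination: Theorem~\ref{thm:main-compact} supplies local LNE of $\ovX$ at the conic ends (your bound $r\le\tr\le L'\,r$), while the comparison estimates and the closing algebra are those of the proof of Theorem~\ref{thm:MainAsympConic}, both resting on Corollary~\ref{cor:conic-complet}. Your additional observation --- that the inner comparison must be obtained by re-running the proof of Corollary~\ref{cor:conic-complet} for $(X,d_\inn^X)$ rather than by quoting it, since $X$ need not be conically singular at its finite singular points --- correctly fills in the detail the paper leaves implicit.
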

\subsection{Examples}\label{section:ex}
The Euclidean space $(\Rn,|-|)$ is naturally a globally conic 
singular manifold and its conic completion is simply $\bS^n$ via 
the inverse of the stereographic projection.
The main result of \cite{CoGrMi2} states that each connected 
component of  a generic real algebraic set of $\Rn$ is a 
globally conic-singular sub-manifold 
(usually non-singular) of $\Rn$, thus is LNE.

Consider the following singular Riemannian metric over $\R^2$:
$$
h = [2x^2-2xy+y^2]\rd x^2 + 2[x^2+xy-y^2]\rd x\rd y + 
[x^2+2xy+2y^2]\rd y^2.
$$
Denoting $r = \sqrt{x^2+y^2}$, we check that $h$ is equivalent to $r^2eucl$ over 
$\R^2$, where $eucl$ is the Euclidean metric tensor. Therefore $g^o = r^{-2}h$
is a Riemannian metric over $E^o = \R^2\setminus \bbo$. Let $E = 
\R^2$ and $d$ be the length distance over $E$ induced from 
$g^o$. Since $g^o$ is equivalent to $eucl|_{E^o}$, the 
space$(E,d)$ is a complete length space. 
In polar coordinates, we find
$$
g^o = 2\rd r^2 + 2r\rd r \rd \tht + r^2 \rd \tht^2.
$$
The following mapping is continuous and surjective
$$
\phi : M = \bS^1\times\Rgo\to E, \;\; (\tht,r)\mapsto 
\left\{
\begin{array}{ccc}
{r\,e^{i(\tht-\ln r)}} & {\rm if} & r>0 \\
0 & {\rm if} & r=0	
\end{array}
\right. ,
$$
and over $M\setminus\dd M$, we verify that 
$$
\phi^*(g^o) = \rd r^2 + r^2\rd \tht^2,
$$
thus extends as a conic 
metric over $M$. Thus $(E,d)$ is a conic singular manifold. 
Using the conic inversion and $\phi$ we check that $(E,d)$ is 
also asymptotically conic.

Observe that although $\phi$ is not differentiable over $\bS^1\times 0$ as 
a mapping with values in $\R^2$, it is yet a globally conic 
resolution of $(E,d)$ in the sense of Proposition~\ref{propGLOBALchart}.

Observe also that the (image of the unit speed) $g^o$-geodesic 
from $\bx_0 = r_0\,e^{\tht_0}$ to $\bbo$ is the 
logarithmic spiral parametrized in polar coordinates as $r \to 
(\tht_0+\ln r-\ln r_0,r)$.
Obviously, a similar phenomenon holds true for any 
$g^o$-geodesic leaving any compact of $E$.
%
%
%
%
%
%
%
%
%
%
%
%
%
%
%
%
%
%
%
%
%
%
%
%
%
%
%
%
%
%
%
%
%
%
%
%
%
%
%
%
%
%
%
%
%
%
%
%
%
%
%
%
%
%
%
%
%
%
%
%
%

\bibliographystyle{plain}
\bibliography{conicbiblio}
\end{document}